\theoremstyle{plain}
\newtheorem{theorem}{Theorem}[section]
\newtheorem{corollary}[theorem]{Corollary}
\newtheorem{remark}[theorem]{Remark}
\newtheorem{lemma}[theorem]{Lemma}
\newtheorem{proposition}[theorem]{Proposition}
\numberwithin{equation}{section}
\newtheorem*{definition*}{Definition}
\title{Concentration on the Boundary and Sign-Changing Solutions for a Slightly Subcritical Biharmonic Problem}
\author{Salom\'on Alarc\'on$^1$\thanks{S. Alarcón was partially supported by by Fondecyt Projects 1211766 and 1221365.} \\
\texttt{salomon.alarcon@usm.cl}
\and
Jorge Faya$^2$\thanks{J. Faya was supported  by ANID (Chile) through the project FONDECYT/Iniciacion 11190423}\\
  \texttt{jorge.faya@uach.cl} 
\and
Carolina Rey$^1$\\
 \texttt{carolina.reyr@usm.cl}
  }
\date{
	$^1$\small Departamento de Matem\'atica, Universidad T\'ecnica
  Fe\-de\-ri\-co San\-ta Ma\-r\'\i a, Valpara\'\i
  so, Chile.\\
  $^2$\small Instituto de Ciencias F\'isicas y Matem\'aticas, Facultad de Ciencias, Universidad Austral de
Chile, Valdivia, Chile.
}
\begin{document}

\maketitle

\begin{abstract}

We consider the fourth-order nonlinear elliptic problem:
\begin{equation*}
	\left\{
	\begin{array}{ll}
		\Delta(a(x)\Delta u) = a(x) \left\vert u \right\vert^{p-2-\epsilon} u & \text{in } \Omega, \\
		\hspace{0.6cm} u = 0 & \text{on } \partial \Omega, \\
		\hspace{0.6cm} \Delta u = 0 & \text{on } \partial \Omega,
	\end{array}
	\right.  
\end{equation*}
where \(\Omega\) is a smooth, bounded domain in \(\mathbb{R}^N\) with \(N \geq 5\). Here, \(p := \frac{2N}{N-4}\) is the Sobolev critical exponent for the embedding \(H^2 \cap H_0^1(\Omega) \hookrightarrow L^p(\Omega)\), and \(a \in C^2(\overline{\Omega})\) is a strictly positive function on \(\overline{\Omega}\).

We establish sufficient conditions on the function \(a\) and the domain \(\Omega\) for this problem to admit both positive and sign-changing solutions with an explicit asymptotic profile. These solutions concentrate and blow up at a point on the boundary \(\partial \Omega\) as \(\epsilon \to 0\). The proofs of the main results rely on the Lyapunov-Schmidt finite-dimensional reduction method.

\end{abstract}

\section{Introduction}

\subsection{Motivation}

In this paper, we study the almost critical biharmonic problem
\begin{equation}
\label{Prob}
\left\{
\begin{array}{ll}
\Delta(a(x)\Delta u) = a(x)|u|^{p-2-\varepsilon}u & \text{in } \Omega, \\
u = 0 & \text{on } \partial\Omega, \\
\Delta u = 0 & \text{on } \partial\Omega,
\end{array}
\right.
\end{equation}
where \( \Omega \) is a smooth bounded domain in \( \mathbb{R}^N \) (\( N \geq 5 \)), \( a \in C^2(\overline{\Omega}) \) is strictly positive on \( \overline{\Omega} \), \( p := \frac{2N}{N-4} \) is the critical Sobolev exponent for the embedding  \( H^2(\Omega) \cap H^1_0(\Omega) \hookrightarrow L^{p}(\Omega) \), and \( \varepsilon  \) is a positive small parameter.

In general, elliptic equations involving fourth-order operators arise naturally in various mathematical and physical contexts and numerous mathematicians have devoted significant attention to these types of problems (e.g. \cite{Chen,Gupta,Arioli,Chang,Colbois, Davila,Feng,Micheletti,Silva,Wang,WeiYe,Xu}). For example, elliptic fourth-order operators of the form  
\begin{equation*}  
\sum_{i,j,s,h} \frac{\partial^2}{\partial x_i \partial x_j} \left( a_{ijsh}(x) \frac{\partial^2}{\partial x_s \partial x_h} \right)  
\label{eq:fourth-order-operator}  
\end{equation*}  
appear in the study of elastic properties of thin plates. This biharmonic equation has its roots in the work of Sophie Germain, particularly in the Germain-Lagrange equation, a fourth-order partial differential equation describing the behavior of vibrating elastic plates. For more information, see \cite{pastukhova2017operator,bucciarelli2012sophie}. 

Beyond elasticity, the operator on the left-hand side of \eqref{Prob} has gained a lot of attention in the last years, see for example \cite{yan2021classification,caldiroli2016entire,guo2020embeddings,huang2020classification} and references therein.
Moreover,  fourth-order operators like the Paneitz operator play a central role in conformal geometry. These operators are fundamental in questions about finding conformal metrics with constant or prescribed \( Q \)-curvature, analogous to the classical Yamabe problem in lower dimensions. Such problems involve solving fourth-order elliptic PDEs, driven by the Paneitz operator. We refer the reader to \cite{chang2008}.

If \( a(x) \equiv 1 \) and \(\varepsilon=0\), problem \eqref{Prob} becomes
\begin{equation}
\label{eq:a=1}
\left\{
\begin{array}{ll}
\Delta^2 u = |u|^{p-2}u & \text{in } \Omega, \\
u = 0 & \text{on } \partial\Omega, \\
\Delta u = 0 & \text{on } \partial\Omega.
\end{array}
\right.
\end{equation}
This critical problem has been widely studied. Whether it can be solved depends largely on the geometry of $\Omega$.  Indeed, Van der Vorst \cite{Van1} showed that problem \eqref{eq:a=1} does not admit positive solutions when $\Omega$ is star-shaped, whereas Ebobisse and Ahmedou \cite{EboAh} demonstrated that \eqref{eq:a=1} possesses a solution provided that some homology group of $\Omega$ is nontrivial. This topological condition is sufficient, although not essential, as noted by Gazzola, Grunau, and Squassina \cite{gazzola2006existence}, who presented examples of contractible domains where solutions exist. Alarcón and Pistoia studied this problem in domains with circular holes in \cite{alarcon2015paneitz} where they provided a precise description of the asymptotic profile of the solution as the radius of the circular hole approaches zero.

To gain a clearer understanding of the context, it is insightful to compare the Paneitz-type problem \eqref{eq:a=1} with the well-known Yamabe-type problem:
\begin{equation}
\label{eqLaCr}
\begin{cases}
-\Delta u = |u|^{\frac{2N}{N-2}-2}u & \text{in } \Omega, \\
u =  0 & \text{on } \partial\Omega.
\end{cases}
\end{equation}
If the domain \(\Omega\) is star-shaped, Pohozaev’s identity \cite{pokhozhaev1965eigenfunctions} implies that problem  \eqref{eqLaCr} has only the trivial solution. On the other hand, Bahri and Coron \cite{bahri1987nonlinear} demonstrated that \eqref{eqLaCr}  has a solution provided that at least one homology group of \(\Omega\) is nontrivial.  Passaseo showed in \cite{passaseo1995new} that this topological condition is not a necessary requirement for the existence of a solution to \eqref{eqLaCr}.

The almost critical case 
\begin{equation}
	\label{eqLaAlCr}
	\begin{cases}
		-\Delta u = |u|^{\frac{2N}{N-2}-2-\epsilon}u & \text{in } \Omega, \\
		u =  0 & \text{on } \partial\Omega,
	\end{cases}
\end{equation}
with \(\varepsilon\) positive and small enough, has also been widely studied.  The pioneering works of Bahri and Rey \cite{bahri1995variational} and Rey \cite{Rey} established the existence of positive solutions that blow up at one or more points in \(\Omega\) as \(\epsilon \to 0\). Regarding sign-changing solutions, a significant number of solutions exhibiting simple or multiple positive and negative sign-changing solutions were constructed by Bartsch, Micheletti, and Pistoia \cite{BaMiPi}, Musso and Pistoia \cite{MuPi}, and Pistoia and Weth \cite{PiWe}.

In \cite{ackermann2013boundary}, Ackermann, Clapp, and Pistoia studied a more general almost critical problem given by:
\begin{equation}
	\label{eqLaAlCrNon}
	\begin{cases}
		-\text{div}\left(a(x)\nabla u\right) = a(x)|u|^{\frac{2N}{N-2}-2-\varepsilon}u & \text{in } \Omega, \\
		u =  0 & \text{on } \partial\Omega,
	\end{cases}
\end{equation}
where, as before, \(\Omega\) is a bounded smooth domain in \(\mathbb{R}^N\) with \(N \geq 3\), \(\varepsilon > 0\) is a small parameter, and \(a \in C^2(\overline{\Omega})\) is a strictly positive function on \(\overline{\Omega}\). The authors provided sufficient conditions on the function \(a\) and the domain \(\Omega\) for the problem to admit both a positive solution and a sign-changing solution, each of which concentrates and blows up at a point on the boundary \(\partial\Omega\)  as \(\epsilon \to 0\).

The work of Ackermann, Clapp, and Pistoia on the second-order problem \eqref{eqLaAlCrNon} has motivated us to investigate analogous results in the fourth-order setting. In this paper, we focus on the almost critical biharmonic problem \eqref{Prob} and derive sufficient conditions on the function \(a(x)\) and the domain \(\Omega\) that guarantee the existence of solutions with explicit asymptotic profiles as \(\epsilon \to 0\).  As expected, the main challenge arises from the order of the operator. For instance, since it is of fourth order, calculating the error introduced by our ansatz becomes more complex. To compute this error, we need to develop new estimates for the derivatives of the Green's function and its regular part. These estimates allow us to analyze the dominant behavior of the key terms and evaluate their overall contribution.

\medskip

We now describe the profile of the solutions we obtain:
\begin{itemize}
	\item \textit{Boundary blow-up solutions}: These solutions exhibit $k$ multiple blow-up points concentrating at a $k$ different points of the boundary $\partial\Omega$.
	\item \textit{Sign-changing solutions}: These involve two blow-up points of opposite sign collapsing to the same boundary point of the boundary $\partial\Omega$.
\end{itemize}

\medskip

To obtain our results, we employ the Lyapunov-Schmidt finite-dimensional reduction method. The structure of the paper is as follows: In the remainder of this section, we introduce the variational framework and provide a precise description of our main results. In section \ref{sect:Finite-Dim-red} we perform the Lyapunov–Schmidt reduction procedure. Section \ref{Sect:Energy-expansion} provides detailed proofs of the main theorems, including the asymptotic expansions of the energy functional. Section \ref{sect:Green-function} focuses on boundary estimates for the Green's function and its derivatives. We use these results to obtain key estimates. Section \ref{sect:Aprox-solution} investigates the discrepancies between the approximate solutions and those established in the theorems.  

Finally, Section \ref{sect:Error} deals with the estimation of the error term, and Section \ref{sect:Reduced-Energy} with  the expansion of the nonlinear term of the reduced energy functional.


\subsection{Variational Framework and Main Results}

In this section, we establish the variational framework and present our main results. We begin by introducing the necessary functional spaces: We consider the inner product and corresponding norm on \( H^{2}(\Omega) \cap H_{0}^{1}(\Omega) \), given by  
\[
(u, v) := \int_{\Omega} a(x) \Delta u \Delta v\,dx, \quad \| u \|_{a} := \left( \int_{\Omega} a(x) |\Delta u|^{2} \,dx\right)^{1/2},
\]
which are equivalent to the standard norm and inner product; see Section 2 in \cite{gazzola2010polyharmonic}.  For any \( q \in [1, \infty) \), we define the norm in \( L^{q}(\Omega) \) as  
\[
|u|_{a,q} := \left( \int_{\Omega} a(x) |u|^{q} \,dx\right)^{1/q}.
\]
The Sobolev embedding theorem asserts that \( H^{2}(\Omega) \cap H_{0}^{1}(\Omega) \) is
continuously embedded in \( L^{q}(\Omega) \) for $1 < q \leq \frac{2N}{N-4}$, and this embedding is compact when $q <\frac{2N}{N-4}$.
The embedding \( i: H^{2}(\Omega) \cap H_{0}^{1}(\Omega) \hookrightarrow L^{\frac{2N}{N-4}}(\Omega) \) has an adjoint operator \( i^{*}: L^{\frac{2N}{N+4}}(\Omega) \to H^{2}(\Omega) \cap H_{0}^{1}(\Omega) \), satisfying
\[
(i^{*}(u), \varphi) = \int_{\Omega} a(x) u(x) \varphi(x)\,dx, \quad \forall \varphi \in H^{2}(\Omega) \cap H_{0}^{1}(\Omega).
\]
Equivalently, 
\[
\left\{
\begin{array}{ll}
\Delta(a(x) \Delta i^{*}(u)) = a(x) u(x) & \text{in } \Omega, \\
i^{*}(u) = 0 & \text{on } \partial \Omega, \\
\Delta i^{*}(u) = 0 & \text{on } \partial \Omega.
\end{array}
\right.
\]
Using Hölder's inequality and the Sobolev embedding, we deduce
\[
\| i^{*}(u) \|_{a}^{2} = (i^{*}(u), i^{*}(u)) = \int_{\Omega} a(x) u(x) i^{*}(u)(x) \, dx 
\leq c \| u \|_{\frac{2N}{N+4}} \| i^{*}(u) \|_{a}.
\]
Thus, there exists a constant \( c > 0 \) such that
\[
\| i^{*}(u) \|_{a} \leq c \| u \|_{\frac{2N}{N+4}}, \quad \forall u \in L^{\frac{2N}{N+4}}(\Omega).
\]

We are now in a position to reformulate the Problem \eqref{Prob}.
For \( \ell \geq 0 \), define \( f_{\ell}(s) := |s|^{\frac{8}{N-4} - \ell} s = |s|^{p-2-\ell} s \). Then, we have
\begin{equation}\label{Prob-equiv}
u = i^{*}(f_{\varepsilon}(u)) \quad \text{if and only if } u \text{ solves problem } \eqref{Prob}.
\end{equation}
Now, we introduce the \textit{bubble} \( U_{\delta, \xi} \), which is a function defined by
\[
U_{\delta, \xi}(x) := \alpha_{N} \left( \frac{\delta}{\delta^{2} + |x-\xi|^{2}} \right)^{\frac{N-4}{2}}, \quad x, \xi \in \mathbb{R}^{N}, \, \delta > 0,
\]
where \( \alpha_{N} = (N(N-4)(N-2)(N+2))^{\frac{N-4}{8}} \). These bubbles are the unique positive solutions in \( \mathcal{D}^{2,2}(\mathbb{R}^{N}) \) for the \textit{limit problem}
\[
\Delta^{2} U = U^{\frac{N+4}{N-4}} \quad \text{in } \mathbb{R}^{N}
\]
(see \cite{lin1998classification}).
We use these bubbles as building blocks to construct solutions to \eqref{Prob}. 
The proof of our results relies on the study of the problem that is obtained when linearizing the limit equation around the solution  $U_{\delta, \xi}$. As shown in \cite{lu2000sobolev}, the set of solutions to the linearized equation
\[
\Delta^{2} \psi = f_{0}'(U_{\delta, \xi}) \psi \quad \text{in } \mathbb{R}^{N}, \quad \psi \in \mathcal{D}^{2,2}(\mathbb{R}^{N}),
\]
is an \( (N+1) \)-dimensional linear space spanned by:
\[
\psi_{\delta, \xi}^{0}(x) := \frac{\partial U_{\delta, \xi}}{\partial \delta}(x) = \alpha_{N} \frac{N-4}{2} \delta^{\frac{N-6}{2}} \frac{|x-\xi|^{2} - \delta^{2}}{(\delta^{2} + |x-\xi|^{2})^{\frac{N-2}{2}}},
\]
and, for each \( j = 1, \dots, N \),
\[
\psi_{\delta, \xi}^{j}(x) := \frac{\partial U_{\delta, \xi}}{\partial \xi_{j}}(x) = \alpha_{N} (N-4) \delta^{\frac{N-4}{2}} \frac{x_{j} - \xi_{j}}{(\delta^{2} + |x-\xi|^{2})^{\frac{N-2}{2}}}.
\]
This property is known as the \textit{non-degeneracy} of the solution $U_{\delta, \xi}$.

\medskip
\begin{definition*}[Weak Solutions]
	A function \( u \in H^{2}(\Omega) \cap H_{0}^{1}(\Omega) \) is said to be a weak solution to \eqref{Prob} if, for any test function \( \phi \in H^{2}(\Omega) \cap H_{0}^{1}(\Omega) \), the following holds:
	\[
	\int_{\Omega} \Delta(a(x) \Delta u(x)) \phi(x) \, dx = \int_{\Omega} a(x) |u(x)|^{p-2-\varepsilon} u(x) \phi(x) \, dx.
	\]
\end{definition*}
Hence, problem \eqref{Prob} is the Euler–Lagrange equation for the functional
\begin{equation}\label{Energy}
J_{\varepsilon}:H^{2}(\Omega) \cap H_{0}^{1}(\Omega) \to \mathbb{R}, \quad J_{\varepsilon}(u) = \frac{1}{2} \int_{\Omega} a(x) |\Delta u(x)|^{2} \,dx - \frac{1}{p-\varepsilon} \int_{\Omega} a(x) |u(x)|^{p-\varepsilon} \,dx.
\end{equation}
The functional \( J_{\varepsilon} \) is of class \( C^{2} \), and its derivative is given by
\[
J_{\varepsilon}'(u)[\phi] = \int_{\Omega} a(x) \Delta u (x)\Delta \phi(x) \,dx- \int_{\Omega} a (x)|u(x)|^{p-2-\varepsilon} u(x) \phi(x) \,dx \quad \forall \phi \in H^{2}(\Omega) \cap H_{0}^{1}(\Omega).
\]

\medskip

For any \( u \in \mathcal{D}^{2,2}(\mathbb{R}^{N}) \), we denote by \( Pu \) its projection onto the space \( H^{2}(\Omega) \cap H_{0}^{1}(\Omega) \), which is the unique solution to the problem
\[
\left\{
\begin{array}{ll}
\Delta^{2} Pu = \Delta^{2} u & \text{in } \Omega, \\
Pu = \Delta Pu = 0 & \text{on } \partial \Omega.
\end{array}
\right.
\]

\medskip

We prove the existence of two distinct types of solutions to problem \eqref{Prob}. For the first result, we assume the following condition.
	
	\bigskip
	$(p1)$: There exist $k$ non-degenerate critical points $\zeta_{1}^{0},\dots ,\zeta_{k}^{0}\in\partial\Omega$ of the restriction of $a(x)$ to the boundary $\partial\Omega$, such that:
	\begin{equation*}\label{cond1}
		(\nabla a(\zeta_{j}^{0}),\nu(\zeta_{j}^{0}) )>0 \ \ \ \forall\ j=1\dots,k
	\end{equation*}
	where \(\nu(\zeta_{j}^{0}) \) denotes the inward-pointing unit normal to \( \partial \Omega \) at \( \zeta_{j}^{0} \).
	Under this assumption, we establish the following result:

\begin{theorem}
    \label{thm1}
    Assume that condition \( (p1) \) holds true. Then, there exists \( \varepsilon_{1} > 0 \) such that, for each choice of \( b_{i} \in \{0, 1\} \) and \( 0 < \varepsilon < \varepsilon_{1} \), problem \eqref{Prob} has a solution \( u_{\varepsilon} \) of the form:
    \[
    u_{\varepsilon}(x) = \sum_{j=1}^{k} (-1)^{b_{j}} P U_{\delta_{j, \varepsilon}, \xi_{j, \varepsilon}} (x)+ \phi_{\varepsilon}(x),
    \]
    where the weight \( \delta_{j, \varepsilon} \) of the bubble  \( U_{\delta_{j, \varepsilon}, \xi_{j, \varepsilon}} \) satisfies
        \[
        \delta_{j, \varepsilon} = \varepsilon^{\frac{N-3}{N-4}} d_{j, \varepsilon}, \quad \text{with } d_{j, \varepsilon} \to d_{j} > 0,
        \]
        for \( j = 1, \dots,k \), as \( \varepsilon \to 0 \), and the center \( \xi_{j, \varepsilon} \) of the bubble \( U_{\delta_{j, \varepsilon}, \xi_{j, \varepsilon}} \) satisfies:
        \[
        \xi_{j, \varepsilon} \to \zeta_{j}^{0}, \quad  \text{as } \varepsilon \to 0,
        \]
        for  \( j = 1, \dots,k \), and the rest function $\phi_{\varepsilon}$ is a remainder term.
\end{theorem}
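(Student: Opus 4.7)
\smallskip

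\textbf{Overall strategy.} The plan is to carry out a Lyapunov--Schmidt finite-dimensional reduction adapted to the biharmonic operator with the weight $a(x)$. Fix signs $b_1,\dots,b_k\in\{0,1\}$ and search for solutions of \eqref{Prob} of the form
\[
u = V_{d,\xi} + \phi, \qquad V_{d,\xi}(x):=\sum_{j=1}^{k}(-1)^{b_j}PU_{\delta_j,\xi_j}(x),
\]
where $\delta_j=\varepsilon^{(N-3)/(N-4)}d_j$ with $d_j>0$, each $\xi_j$ lies in a small tubular neighbourhood of $\zeta_j^{0}\in\partial\Omega$, and $\phi\in K_{d,\xi}^{\perp}$ is a small remainder, orthogonal (with respect to the inner product $(\cdot,\cdot)$) to the $k(N+1)$-dimensional space
\[
K_{d,\xi}:=\mathrm{span}\{P\psi^{i}_{\delta_j,\xi_j}\,:\,j=1,\dots,k,\ i=0,\dots,N\},
\]
which is the natural approximate kernel of the linearization at $V_{d,\xi}$. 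Writing \eqref{Prob-equiv} as $u=i^{*}(f_{\varepsilon}(u))$, the problem splits into an \emph{auxiliary equation} on $K_{d,\xi}^{\perp}$ and a finite-dimensional \emph{bifurcation equation} on $K_{d,\xi}$.

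\textbf{Auxiliary equation.} First, I would prove that the linear operator $L_{d,\xi}:=\Pi_{d,\xi}^{\perp}\circ(I-i^{*}\circ f_{0}'(V_{d,\xi}))$ is uniformly invertible on $K_{d,\xi}^{\perp}$, with a bound independent of $\varepsilon$, for $(d,\xi)$ in a suitable compact set. This uses the non-degeneracy of $U_{\delta,\xi}$ (the $(N+1)$-dimensional kernel is exactly spanned by the $\psi^{i}_{\delta,\xi}$), together with standard blow-up arguments transplanting the $\mathbb R^N$ analysis onto $\Omega$; the boundary location of $\xi_j$ does not obstruct this because $\delta_j\to 0$ much faster than $\mathrm{dist}(\xi_j,\partial\Omega)$. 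Then, controlling the error $R_{\varepsilon}(d,\xi):=V_{d,\xi}-i^{*}(f_{\varepsilon}(V_{d,\xi}))$ in an appropriate norm (this is the content of the promised estimates in Sections \ref{sect:Aprox-solution}--\ref{sect:Error}), a standard contraction-mapping argument yields a unique solution $\phi=\phi_{\varepsilon}(d,\xi)$ of the auxiliary equation, together with $C^1$-dependence on $(d,\xi)$ and a quantitative bound of the type $\|\phi_\varepsilon\|_{a}=o(\varepsilon^{(N-4)/(2(N-3))})$ on the expected scale.

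\textbf{Reduced energy and critical-point analysis.} Define the reduced functional $\widetilde J_{\varepsilon}(d,\xi):=J_{\varepsilon}\bigl(V_{d,\xi}+\phi_{\varepsilon}(d,\xi)\bigr)$. It is classical that its critical points give genuine solutions of \eqref{Prob}. Using the expansions of $PU_{\delta,\xi}$ in terms of the Green's function of $\Delta(a\Delta\cdot)$ with Navier boundary conditions and its regular part---together with the boundary estimates developed in Section \ref{sect:Green-function}---I would establish the expansion
\[
\widetilde J_{\varepsilon}(d,\xi)=\sum_{j=1}^{k}a(\xi_j)\Bigl[c_{0}+c_{1}\varepsilon\log\varepsilon+\varepsilon\,\Phi_{j}(d_j,\xi_j)\Bigr]+o(\varepsilon),
\]
where $c_0,c_1$ are explicit constants and, after the rescaling $\delta_j=\varepsilon^{(N-3)/(N-4)}d_j$, the function $\Phi_j$ decouples into the sum of a term of the form $A\,d_j^{N-4}\,H_a(\xi_j,\xi_j)$ (from the boundary Robin function) plus a term of the form $B\,d_j\,(\nabla a(\xi_j),\nu(\xi_j))$ coming from the first-order Taylor expansion of $a$ at the boundary point, plus lower-order contributions. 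The specific exponent $(N-3)/(N-4)$ is precisely the one that balances these two competing effects.

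\textbf{Conclusion.} Condition $(p1)$ ensures that at each $\zeta_j^{0}$ the tangential Hessian of $a|_{\partial\Omega}$ is non-degenerate and $(\nabla a(\zeta_j^0),\nu(\zeta_j^0))>0$. Minimizing $\Phi_j$ in $d_j$ then yields a unique $d_j>0$ (by elementary calculus on $A t^{N-4}+Bt$ with $A,B>0$), and the Hessian in the $\xi_j$-variable along $\partial\Omega$ is non-degenerate by hypothesis, while the normal direction is controlled by the positivity of $(\nabla a,\nu)$. A stable critical point $(d^{*},\xi^{*})$ of the leading term therefore persists under the $o(\varepsilon)$ perturbation (by a standard topological/degree argument or the implicit function theorem applied to $\nabla \widetilde J_\varepsilon$), giving $(d_{j,\varepsilon},\xi_{j,\varepsilon})\to(d_{j},\zeta_j^{0})$ and hence the required solution $u_\varepsilon=V_{d_\varepsilon,\xi_\varepsilon}+\phi_\varepsilon$. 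The main technical obstacle, as flagged in the introduction, is the careful derivation of the boundary expansion of the weighted Green's function and its derivatives needed to justify the form of $\Phi_j$ above; this is where the fourth-order nature of the operator substantially complicates the analogous argument of Ackermann--Clapp--Pistoia for \eqref{eqLaAlCrNon}.
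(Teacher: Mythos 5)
Your overall strategy (Lyapunov--Schmidt, auxiliary equation via contraction, reduced energy, persistence of a stable critical point) matches the paper, but the heart of Theorem \ref{thm1} is the precise structure of the reduced energy, and there your description contains a genuine gap that would make the argument fail as stated.

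The paper parametrizes the center by \emph{two} independent quantities: the boundary point $\xi_j^0\in\partial\Omega$ and a normal-distance parameter, namely $\xi_j=\xi_j^0+\tau_j\nu(\xi_j^0)$ with $\tau_j=t_j\varepsilon$, in addition to $\delta_j=d_j\varepsilon^{(N-3)/(N-4)}$. After this double rescaling, the $O(\varepsilon)$ part of the reduced energy is, per bubble, of the form
\[
c_1\,t_j\,(\nabla a(\xi_j^0)\cdot\nu(\xi_j^0))\;+\;c_2\Bigl(\frac{d_j}{2t_j}\Bigr)^{N-4}\;-\;c_3\log d_j,
\]
with $c_1,c_2,c_3>0$; it is the interplay of all three terms (and of the two parameters $d_j,t_j$) that produces a stable interior minimum. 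Your proposal never introduces the normal-distance variable $t_j$, and the leading functional you write down, $\Phi_j= A\,d_j^{N-4}\,H_a(\xi_j,\xi_j)+B\,d_j\,(\nabla a(\xi_j),\nu(\xi_j))$, has two problems: the first-order Taylor contribution of $a$ is proportional to $\mathrm{dist}(\xi_j,\partial\Omega)$, not to the concentration parameter $d_j$, so the second term has the wrong variable; and the crucial $-\log(d_j)$ contribution coming from the $\varepsilon$-perturbed exponent in $|u|^{p-\varepsilon}$ (which is what makes the $\varepsilon\log\varepsilon$ term appear and anchors the $d_j$-minimum) is absent. As a result, the one-variable calculus on $At^{N-4}+Bt$ that you invoke to produce $d_j>0$ does not reflect the actual reduced energy; in particular $At^{N-4}+Bt$ with $A,B>0$ has no interior positive minimum. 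You would need to (i) add the $\tau_j=t_j\varepsilon$ scaling as an independent parameter, (ii) correct the Taylor term so it is linear in $t_j$, and (iii) carry the $-\log(d_j)$ term from the almost-critical nonlinearity, before the critical point analysis in $(d_j,t_j,\xi_j^0)$ can close.
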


Theorem \ref{thm1} establishes the existence of a solution that may change sign and blows up at the \(k\) points given by  the hypotheses described in $(p1)$. Our next result demonstrates the existence of a sign-changing solution with two blow-up points—one positive and one negative—that collapse to a single point on the boundary of \( \Omega \). To prove this, we assume the following:

\medskip

$(p2)$:  There exists a critical point $\zeta^{0}$ of the restriction of $a(x)$ to the boundary $\partial\Omega$ such that:
\begin{equation*}
	(\nabla a(\zeta^{0}),\nu(\zeta^{0}) )>0 
\end{equation*}
Additionally, we introduce a hypothesis regarding the symmetries of both the domain $\Omega$ and the function $a(x)$ that we describe below:   Assume that there exist a set of vectors \( e_1, \dots, e_{N-1} \in \mathbb{R}^N \), such that the set \( \{ \nu(\zeta^0), e_1, \dots, e_{N-1} \} \) forms an orthonormal basis, such that both \( \Omega \) and \( a(x) \) are invariant under reflections \( R_i \) with respect to each hyperplane \( \zeta_0 + \{ e_i = 0 \} \), i.e.,
\[
R_i(x) \in \Omega \quad \text{and} \quad a(R_i(x)) = a(x) \quad \forall x \in \Omega, \, i = 1, \dots, N-1,
\]
where the reflection \( R_i \) is defined by
	\begin{equation}\label{defRefl}
	R_{i}(x)=\zeta^0+(x,\nu(\zeta^0))\nu(\zeta^0) -(x,e_{i})e_{i}+\sum_{\substack{j=1\\ j \neq i}}^{N-1}(x,e_{j})e_{j}.
\end{equation}	
where, as before, \( \nu := \nu(\zeta^0) \) denotes the inward-pointing unit normal to \( \partial \Omega \) at \( \zeta^0 \). 

We will write 
	\begin{equation}\label{EspSim}
	\mathcal{H}(\Omega) := \{ f \in H^{2}(\Omega) \cap H_{0}^{1}(\Omega) : f(R_{i}(x)) = f(x), \, \forall x \in \Omega, \, i = 1, \dots, N-1 \}.
\end{equation}
for the space of functions on $H^{2}(\Omega) \cap H_{0}^{1}(\Omega)$ that are invariant  under every reflection \( R_i \). Note that if \( \xi = \zeta^{0} + t \nu(\zeta^{0}) \in \Omega \), then $PU_{\delta, \xi}\in \mathcal{H}(\Omega).$

\medskip

Under these assumptions, we establish the following result:

	\begin{theorem}\label{thm2}
    Assume that condition \( (p2) \) holds true. Then, there exists \( \varepsilon_0 > 0 \) such that, for each \( \varepsilon \in (0, \varepsilon_0) \), problem \eqref{Prob} admits a sign-changing solution \( u_{\varepsilon} \) of the form:
    \[
    u_{\varepsilon}(x) = P U_{\delta_{1, \varepsilon}, \xi_{1, \varepsilon}}(x) - P U_{\delta_{2, \varepsilon}, \xi_{2, \varepsilon}}(x) + \phi_{\varepsilon}(x),
    \]
    where the weights \( \delta_{i, \varepsilon} \) of the bubbles \( U_{\delta_{i, \varepsilon}, \xi_{i, \varepsilon}} \) satisfy
        \[
        \varepsilon^{-\frac{N-3}{N-4}} \delta_{i, \varepsilon} \to d_i > 0, \quad \text{as } \varepsilon \to 0,
        \]
        for \( i = 1, 2 \), and the centers \( \xi_{i, \varepsilon} \) of the bubbles satisfy
        \[
        \xi_{i, \varepsilon} = \zeta^{0} + \varepsilon t_{i, \varepsilon} \nu(\zeta^{0}), \quad \text{with } t_{i, \varepsilon} \to t_i > 0, \quad \text{as } \varepsilon \to 0,
        \]
        for \( i = 1, 2 \),  and the rest function $\phi_{\varepsilon}$ is a remainder term. Moreover, the solution \( u_{\varepsilon} \) belongs to the space $\mathcal{H}(\Omega) $.
\end{theorem}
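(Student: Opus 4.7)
The plan is to carry out the Lyapunov--Schmidt finite--dimensional reduction of Theorem \ref{thm1} inside the invariant subspace $\mathcal{H}(\Omega)$. Introduce the four--parameter family
$$W_{d_1,d_2,t_1,t_2} := PU_{\delta_1,\xi_1} - PU_{\delta_2,\xi_2},$$
with $\delta_i = \varepsilon^{(N-3)/(N-4)} d_i$ and $\xi_i = \zeta^0 + \varepsilon t_i \nu(\zeta^0)$, where $(d_i,t_i)$ ranges over a fixed compact subset of $(0,\infty)^4$. Because each $\xi_i$ lies on the normal line through $\zeta^0$, each projected bubble lies in $\mathcal{H}(\Omega)$, and so does $W_{d_1,d_2,t_1,t_2}$. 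The hypothesis that $\Omega$ and $a$ are invariant under the reflections $R_i$ ensures that $J_\varepsilon$ is $R_i$--equivariant; by Palais' symmetric criticality principle, it is enough to produce a critical point of $J_\varepsilon|_{\mathcal{H}(\Omega)}$.

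First, inside $\mathcal{H}(\Omega)$ I would solve the auxiliary equation by the standard contraction argument: for each admissible choice of parameters, find a unique correction $\phi_\varepsilon = \phi_\varepsilon(d_1,d_2,t_1,t_2)$ belonging to the symmetric orthogonal complement of $\mathrm{span}\{P\psi_{\delta_i,\xi_i}^{j} : i=1,2,\ j=0,\dots,N\}\cap\mathcal{H}(\Omega)$ such that $W+\phi_\varepsilon$ solves \eqref{Prob-equiv} modulo this kernel. The required error estimate $\|\phi_\varepsilon\|_a = o(\varepsilon^{1/2})$ together with $C^1$ dependence on the parameters follows from the error estimates and the uniform invertibility of the linearised operator developed in Sections \ref{sect:Error} and \ref{sect:Aprox-solution}. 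Crucially, only the directions $\psi^{0}_{\delta_i,\xi_i}$ (dilation) and $\psi^{\nu}_{\delta_i,\xi_i}$ (translation along $\nu(\zeta^0)$) survive the symmetry projection, so the reduced kernel has precisely dimension four, matching the number of parameters.

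Next, I would expand the reduced functional $\widetilde{J}_\varepsilon(d_1,d_2,t_1,t_2) := J_\varepsilon(W+\phi_\varepsilon)$. By the $C^1$ reduction scheme, its critical points correspond to genuine critical points of $J_\varepsilon$ in $\mathcal{H}(\Omega)$, and its leading--order expansion coincides with that of $J_\varepsilon(W)$. Using the boundary asymptotics of the Green's function and its regular part from Section \ref{sect:Green-function}, one expects
$$\widetilde{J}_\varepsilon = 2 A_0\, a(\zeta^0) - \varepsilon\bigl[ A_1 a(\zeta^0)\log\varepsilon + \Psi(d_1,d_2,t_1,t_2) + A_2 \bigr] + o(\varepsilon),$$
where $\Psi$ collects three contributions: the Robin--function self--interaction of each $PU_{\delta_i,\xi_i}$, the bubble--bubble interaction coming from the fact that both concentration points collapse to the same $\zeta^0$, and a positive linear term in $t_1+t_2$ produced by $(\nabla a(\zeta^0),\nu(\zeta^0))>0$. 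The structural goal is to show $\Psi$ blows up to $+\infty$ as any $t_i\to 0^+$ or $t_i\to\infty$ and as any $d_i\to 0^+$ or $d_i\to\infty$, hence attains an interior minimum on $(0,\infty)^4$.

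Once such an interior critical point $(d_1^*,d_2^*,t_1^*,t_2^*)$ of $\Psi$ is found, a standard perturbation argument yields, for every sufficiently small $\varepsilon$, a critical point of $\widetilde{J}_\varepsilon$ in a neighbourhood; the corresponding $u_\varepsilon = W + \phi_\varepsilon$ is the desired sign--changing solution of \eqref{Prob}, with sign change enforced by the opposite signs of the two bubbles that dominate $\phi_\varepsilon$ in each concentration zone. The main obstacle I anticipate is the precise computation of the bubble--bubble interaction appearing in $\Psi$: because the two opposite--sign bubbles collapse to the \emph{same} boundary point, the interaction is not a purely interior quantity but couples with the boundary via the reflected Green's function, and the opposite signs must produce a contribution with the correct sign (rather than cancelling to a higher order). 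This is exactly the role played by the new derivative estimates for the Green's function and its regular part announced in the introduction and carried out in Section \ref{sect:Green-function}.
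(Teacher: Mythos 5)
Your proposal matches the paper's proof essentially step by step: reduction to the symmetric subspace $\mathcal{H}(\Omega)$ via Palais' principle, contraction argument in the symmetric orthogonal complement, $C^1$ expansion of the reduced functional $\mathcal{I}^2_\varepsilon(\mathbf{d},\mathbf{t})$, and a stable interior minimum of the $\varepsilon$-order coefficient $F^2$. The terms you list in $\Psi$ --- the positive linear term $\nabla a(\zeta^0)\cdot\nu(\zeta^0)(t_1+t_2)$, the two Robin self-interactions $(d_i/2t_i)^{N-4}$, the bubble--bubble interaction $(d_1d_2)^{(N-4)/2}(|t_1-t_2|^{-(N-4)}-|t_1+t_2|^{-(N-4)})$, and the logarithmic term $\log(d_1)+\log(d_2)$ --- are precisely those appearing in the paper's Proposition \ref{exp2}, and your observation that this forces $F^2\to+\infty$ on the boundary of $(0,\infty)^2\times\{0<t_1<t_2\}$ is exactly how the paper concludes the existence of a $C^0$-stable minimum before invoking Lemma \ref{reduc}. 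Your remark that only the dilation and normal-translation kernel directions survive the symmetry projection is a slightly sharper accounting than the paper spells out (the paper formally keeps all $P\psi_i^j$ in $K^{2,\varepsilon}$ and lets the tangential components drop out implicitly), but it does not change the scheme. In short, this is the same proof.
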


\medskip

The symmetry assumption $(p2)$ helps address technical challenges that arise when looking for a solution whose bubbles collapse to the same point. We believe this hypothesis could be removed, but doing so would require going further in the expansion of the reduced energy functional, making the computations excessively tedious.

\bigskip
 

\section{Finite Dimensional Reduction}\label{sect:Finite-Dim-red}

In this section, we employ the method of finite-dimensional reduction to transform the original infinite-dimensional problem into a finite-dimensional variational problem. 

	The solutions to problem \eqref{Prob} given by Theorem \ref{thm1} are of the form
\begin{equation}\label{typ1}
	u_{\epsilon}=\sum_{j=1}^{k}(-1)^{b_{j}}PU_{\delta_{j},\xi_{j}}+\phi_{\epsilon},
\end{equation}
where $b_{j}\in \{0,1\}$ are fix numbers. The concentration parameters $\delta_{i}$ satisfy
\begin{equation*}
	\delta_{i}=d_{i}\epsilon^{\frac{N-3}{N-4}} \mbox{ for some  } d_{i}>0 \ \mbox{for every } i=1,\dots, k, 
\end{equation*}		
and the concentration points $\xi_{i}$ are given by
\begin{equation*}
	\xi_{i}=\xi_{i}^{0}+\tau_{i}\eta (\xi_{i}^{0})    \mbox{ where } \xi_{i}^{0}\in\partial\Omega \ \ \mbox{and } \tau_{i}=t_{i}\epsilon \mbox{ for some }  t_{i}>0.
\end{equation*}

\medskip

We denote   $\boldsymbol{\xi}=(\xi_{1}^{0},\dots, \xi_{k}^{0})$, $\textbf{d}=(d_{1},\dots, d_{k})$ and $\textbf{t}=(t_{1},\dots, t_{k})$ and we assume that these parameters belong to the set
\begin{equation*}
	\Sigma_{1}:=\{ (\boldsymbol{\xi},\textbf{d}, \textbf{t})\in (\partial\Omega)^{k}\times (0,\infty)^{k}\times (0,\infty)^{k}: \xi_{i}^{0}\neq \xi_{j}^{0} \mbox{ if } i\neq j  \}.
\end{equation*}

\medskip

For each element $(\boldsymbol{\xi},\textbf{d}, \textbf{t})\in \Sigma_{1}$  and every $\epsilon>0$, we consider the fowling decomposition 
\begin{equation*}
	H^{2}(\Omega)\cap H_{0}^{1}(\Omega) =	K^{1,\epsilon}_{(\boldsymbol{\xi},\textbf{d}, \textbf{t})} \oplus K^{1,\epsilon,\perp}_{(\boldsymbol{\xi},\textbf{d}, \textbf{t})}
\end{equation*}
where
\begin{equation*}
	\begin{split}
		K^{1,\epsilon}_{(\boldsymbol{\xi},\textbf{d}, \textbf{t})}&:=span\{ P\psi_{i}^{j}:i=1,\dots,k ,  j=0,1\dots,N \},\\
		K^{1,\epsilon,\perp}_{(\boldsymbol{\xi},\textbf{d}, \textbf{t})}&:=\{u\in H^{2}(\Omega)\cap H_{0}^{1}(\Omega) :(u,P\psi_{i}^{j})=0 ,i=1,\dots,k ,  j=0,1\dots,N \}.
	\end{split}		
\end{equation*}
We consider the respective orthogonal projection operators
\begin{equation*}
	\Pi^{1,\epsilon}_{(\boldsymbol{\xi},\textbf{d}, \textbf{t})}:H^{2}(\Omega)\cap H_{0}^{1}(\Omega)\rightarrow K^{1,\epsilon}_{(\boldsymbol{\xi},\textbf{d}, \textbf{t})} \mbox{ and }  \Pi^{1,\epsilon,\perp}_{(\boldsymbol{\xi},\textbf{d}, \textbf{t})}:H^{2}(\Omega)\cap H_{0}^{1}(\Omega)\rightarrow K^{1,\epsilon,\perp}_{(\boldsymbol{\xi},\textbf{d}, \textbf{t})}
\end{equation*}
defined by
\begin{equation*}
	\Pi^{1,\epsilon}_{(\boldsymbol{\xi},\textbf{d},\textbf{t})}(u):=\sum_{\substack{i=1,\dots,k\\j=0,\dots,N}}(u,P\psi_{i}^{j}) \ \ \mbox{ and } \ \ 	\Pi^{1,\epsilon,\perp}_{(\boldsymbol{\xi},\textbf{d},\textbf{t})}(u):=u-\sum_{\substack{i=1,\dots,k\\j=0,\dots,N}}(u,P\psi_{i}^{j})
\end{equation*}

\vspace{5mm}
On the other hand, the solutions given by Theorem \ref{thm2}  are of the form  
\begin{equation}\label{typ2}
	u_{\epsilon}=PU_{\delta_{1,\epsilon},\xi_{1,\epsilon}}-PU_{\delta_{2,\epsilon},\xi_{2,\epsilon}}+\phi_{\epsilon},
\end{equation}
where the concentration parameters $\delta_{i}$ satisfy:
\begin{equation*}
	\delta_{i,\epsilon}=d_{i}\epsilon^{\frac{N-3}{N-4}} \mbox{ for some  } d_{i}>0 \ \mbox{for each } i=1,\dots, k, 
\end{equation*}		
and the concentration points $\xi_{i}$ are located on the line $L=\{\zeta^{0}+\ell\nu(\zeta^0) : \ell\in\mathbb{R} \}$, i.e. they satisfy
\begin{equation*}
	\xi_{1,\epsilon}=\zeta^{0}+\tau_{1}\nu(\zeta^0), \ \  \xi_{2,\epsilon}=\zeta^{0}+\tau_{2}\nu(\zeta^0)    \mbox{ where } \tau_{i}=t_{i}\epsilon \mbox{ for some }   0<t_{1}<t_{2}.
\end{equation*}
We will write $\textbf{d}=(d_{1}, d_{2})$ and $\textbf{t}=(t_{1} , t_{2})$ and assume that these parameters lie on the set:
\begin{equation*}
	\Sigma_{2}:=\{ (\textbf{d}, \textbf{t})\in (0,\infty)^{2}\times (0,\infty)^{2}: \ 0<t_{1}< t_{2} \}.
\end{equation*}
In this case, we are looking for solutions \( u_{\epsilon} \) in the space \(\mathcal{H}(\Omega)\). For this purpose, the following remark is essential.
\begin{remark}[Symmetry Considerations] \label{RemSim}
	Under assumption \( (p2) \), if \( \xi = \zeta^{0} + t \nu(\zeta^{0}) \in \Omega \), then any solution \( P\psi  \) to the problem
	\begin{equation}\label{pp2}
		\left\{
		\begin{array}{ll}
			\Delta^{2} P\psi  = f_{0}'(U_{\delta, \xi}) \psi  & \text{in } \Omega, \\
			\Delta P\psi  = P\psi  = 0 & \text{on } \partial \Omega,
		\end{array}
		\right.
	\end{equation}
	belongs to \(\mathcal{H}(\Omega)\). This symmetry arises because, if for every \( i = 1, \dots, N-1 \),  we have that \( f_{0}'(U_{\delta, \xi}(R_{i}(x))) = f_{0}'(U_{\delta, \xi}(x)) \), hence if \( P\psi(x) \) is a solution to \eqref{pp2} then \( P\psi(R_{i}(x)) \) is also solution.
\end{remark}

\bigskip

Hence, since we are assuming that \(\Omega\) and \(a\) satisfy \((p2)\), it follows from Remark \ref{RemSim} that, for every $(\zeta^{0}, \textbf{d}, \textbf{t})\in \Sigma_{2}$ the space \(\mathcal{H}(\Omega)\) admits the following decomposition:
\[
\mathcal{H}(\Omega) = K^{2,\epsilon}_{(\zeta^{0}, \textbf{d}, \textbf{t})} \oplus K^{2,\epsilon,\perp}_{(\zeta^{0}, \textbf{d}, \textbf{t})},
\]
where
\[
K^{2,\epsilon}_{(\zeta^{0}, \textbf{d}, \textbf{t})} := \text{span}\{ P\psi_{i}^{j} : i = 1, 2, \ j = 0, 1, \dots, N \} \subseteq \mathcal{H}(\Omega),
\]
and its respective orthogonal complement in \(\mathcal{H}(\Omega)\) is defined as:
\[
K^{2,\epsilon,\perp}_{(\zeta^{0}, \textbf{d}, \textbf{t})} := \{ u \in \mathcal{H}(\Omega) : (u, P\psi_{i}^{j}) = 0 \text{ for } i = 1, \dots, k \text{ and } j = 0, 1, \dots, N \}.
\]
Then we introduce the orthogonal projection operators:
\begin{equation*}
	\Pi^{2,\epsilon}_{(\zeta^{0}, \textbf{d}, \textbf{t})}:\mathcal{H}(\Omega) \rightarrow K^{2,\epsilon}_{(\zeta^{0}, \textbf{d}, \textbf{t})} \mbox{ and }  \Pi^{2,\epsilon,\perp}_{(\zeta^{0}, \textbf{d}, \textbf{t})}:\mathcal{H}(\Omega) \rightarrow K^{2,\epsilon,\perp}_{(\zeta^{0}, \textbf{d}, \textbf{t})}.
\end{equation*}

\medskip

For simplicity, when there is no danger of confusion, we will write $(\zeta^{0}, \textbf{d}, \textbf{t})=(\boldsymbol{\xi},\textbf{d}, \textbf{t})$. Also,  for each $\epsilon>0$ and any element $(\boldsymbol{\xi},\textbf{d}, \textbf{t})\in \Sigma_{i}$ we will write: 
\begin{equation*}
	V^{\epsilon}_{(\boldsymbol{\xi},\textbf{d}, \textbf{t})}=V_{(\boldsymbol{\xi},\textbf{d}, \textbf{t})}:=	\sum_{j=1}^{k}(-1)^{b_{j}}PU_{\delta_{j},\xi_{j}} \ \ \mbox{ and }\ \  	V^{\epsilon}_{(\boldsymbol{\xi},\textbf{d}, \textbf{t})}=	V_{(\textbf{d}, \textbf{t})}=PU_{\delta_{1},\xi_{1}}-PU_{\delta_{2},\xi_{2}}.
\end{equation*}
We will also write: $K^{\epsilon,\perp}_{(\boldsymbol{\xi},\textbf{d}, \textbf{t})}:=	K^{i,\epsilon,\perp}_{(\boldsymbol{\xi},\textbf{d}, \textbf{t})}$, $\Pi^{\epsilon,\perp}_{(\boldsymbol{\xi},\textbf{d}, \textbf{t})}=\Pi^{i,\epsilon,\perp}_{(\boldsymbol{\xi},\textbf{d}, \textbf{t})}$ and $\Sigma=\Sigma_{i}$ for $i=1,2$.

\newpage

In both cases \eqref{typ1} and \eqref{typ2}, the correction term \( \phi_{\varepsilon} \) belongs to \( K_{(\boldsymbol{\xi}, \mathbf{d}, \mathbf{t})}^{\varepsilon, \perp} \). Solving problem \eqref{Prob-equiv} reduces to finding an element \( (\boldsymbol{\xi}, \mathbf{d}, \mathbf{t}) \in \Sigma \) and \( \phi_{\varepsilon} \in K_{(\boldsymbol{\xi}, \mathbf{d}, \mathbf{t})}^{\varepsilon, \perp} \) that satisfies the following system
	
	\begin{equation}\label{pa}
		\Pi^{\varepsilon}_{(\boldsymbol{\xi},\textbf{d}, \textbf{t})}(V_{(\boldsymbol{\xi},\textbf{d}, \textbf{t})}+\phi_{\varepsilon}-i^{*}(f_{\varepsilon}(V_{(\boldsymbol{\xi},\textbf{d}, \textbf{t})}+\phi_{\varepsilon})))=0,
	\end{equation}
	and 
	\begin{equation}\label{paort}
		\Pi^{\varepsilon,\perp}_{(\boldsymbol{\xi},\textbf{d}, \textbf{t})}(V_{(\boldsymbol{\xi},\textbf{d}, \textbf{t})}+\phi_{\varepsilon}-i^{*}(f_{\varepsilon}(V_{(\boldsymbol{\xi},\textbf{d}, \textbf{t})}+\phi_{\varepsilon})))=0.
	\end{equation}
	
	\bigskip
	To achieve this goal, we first show that for each $(\boldsymbol{\xi},\textbf{d}, \textbf{t})\in \Sigma$  and for any small $\varepsilon>0$ we can find an element  $\phi_{\boldsymbol{\xi},\textbf{d}, \textbf{t}}^{\varepsilon} \in K^{\varepsilon,\perp}_{(\boldsymbol{\xi},\textbf{d}, \textbf{t})}$  that satisfies $\eqref{paort}$.  This is the essence of Remark \ref{remarkphi}. To this aim we define a linear operator $L^{\varepsilon}_{(\boldsymbol{\xi},\textbf{d}, \textbf{t})} : K^{\varepsilon,\perp}_{(\boldsymbol{\xi},\textbf{d}, \textbf{t})} \to K^{\varepsilon,\perp}_{(\boldsymbol{\xi},\textbf{d}, \textbf{t})}$ by
	\begin{equation*}
		L^{\varepsilon}_{(\boldsymbol{\xi},\textbf{d}, \textbf{t})} \phi=	\Pi^{\varepsilon,\perp}_{(\boldsymbol{\xi},\textbf{d}, \textbf{t})}(\phi-i^{*}(f'_{\varepsilon}(V_{(\boldsymbol{\xi},\textbf{d}, \textbf{t})})\phi)).
	\end{equation*}
	
	The following statement holds true.
	
	\begin{proposition}
		For any compact subset $C$ of $\Sigma$ there exist $\varepsilon_0 > 0$ and $c > 0$ such that for each $\varepsilon \in (0, \varepsilon_0)$ and $(\boldsymbol{\xi},\textbf{d}, \textbf{t}) \in C$ the operator $L^{\varepsilon}_{(\boldsymbol{\xi},\textbf{d}, \textbf{t})}$ is invertible and
		\[
		\|L^{\varepsilon}_{(\boldsymbol{\xi},\textbf{d}, \textbf{t})} \phi\| \geq c \|\phi\| \quad \forall \phi \in K^{\perp}_{(\boldsymbol{\xi},\textbf{d}, \textbf{t})}.
		\]
	\end{proposition}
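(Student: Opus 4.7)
The plan is to establish the claimed lower bound by a blow-up contradiction argument, from which the invertibility of $L^{\varepsilon}_{(\boldsymbol{\xi},\mathbf{d},\mathbf{t})}$ follows by Fredholm theory, since it is a compact perturbation of the identity on $K^{\varepsilon,\perp}_{(\boldsymbol{\xi},\mathbf{d},\mathbf{t})}$. Suppose, for contradiction, that the estimate fails. Using the compactness of $C$, extract sequences $\varepsilon_n\to 0^+$, $(\boldsymbol{\xi}_n,\mathbf{d}_n,\mathbf{t}_n)\to(\boldsymbol{\xi}_*,\mathbf{d}_*,\mathbf{t}_*)\in C$, and $\phi_n\in K^{\varepsilon_n,\perp}_{(\boldsymbol{\xi}_n,\mathbf{d}_n,\mathbf{t}_n)}$ with $\|\phi_n\|_a=1$ such that $\psi_n:=L^{\varepsilon_n}_{(\boldsymbol{\xi}_n,\mathbf{d}_n,\mathbf{t}_n)}\phi_n$ satisfies $\|\psi_n\|_a\to 0$. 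Writing $V_n:=V_{(\boldsymbol{\xi}_n,\mathbf{d}_n,\mathbf{t}_n)}$ and introducing suitable Lagrange multipliers $c^{i,j}_n\in\mathbb{R}$ for the projection onto $K^{\varepsilon_n}_{(\boldsymbol{\xi}_n,\mathbf{d}_n,\mathbf{t}_n)}$, the defining relation $L^{\varepsilon_n}\phi_n=\psi_n$ rewrites as
\[
\phi_n \;=\; i^{*}\bigl(f'_{\varepsilon_n}(V_n)\,\phi_n\bigr)\;+\;\sum_{i,j}c^{i,j}_n\, P\psi^{j}_{i,n}\;+\;\psi_n.
\]

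The heart of the argument is a blow-up analysis around each concentration point $\xi_{i,n}$. Define the rescaled profiles
\[
\widetilde{\phi}_{i,n}(y):=\delta_{i,n}^{(N-4)/2}\,\phi_n(\delta_{i,n}y+\xi_{i,n}).
\]
Since the $\mathcal{D}^{2,2}$-norm is invariant under this scaling and $a$ is bounded away from zero, each $\widetilde{\phi}_{i,n}$ is bounded in $\mathcal{D}^{2,2}(\mathbb{R}^N)$, so after extracting a subsequence $\widetilde{\phi}_{i,n}\rightharpoonup\widetilde{\phi}_i$. Pairing the identity for $\phi_n$ with each $P\psi^{l}_{k,n}$ and exploiting the almost-orthogonality $(P\psi^{j}_{i},P\psi^{l}_{k})=\sigma_{jl}\,\delta_{ik}+o(1)$---which rests on the boundary estimates of the Green's function of $\Delta(a\Delta\cdot)$ developed in Section \ref{sect:Green-function}---one deduces that $c^{i,j}_n=o(1)$. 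Rescaling the identity around $\xi_{i,n}$ and using that $V_n$ rescales locally to $\pm U_{1,0}$, while the contributions from the other concentration points vanish locally (since the $\xi_{i,n}$ either stay uniformly separated, as in \eqref{typ1}, or separate at a slower scale than $\delta_{i,n}$, as in \eqref{typ2}), one identifies $\widetilde{\phi}_i$ as a weak $\mathcal{D}^{2,2}$-solution of $\Delta^2\widetilde{\phi}_i=f'_0(U_{1,0})\widetilde{\phi}_i$ in $\mathbb{R}^N$. The orthogonality conditions $(\phi_n,P\psi^{j}_{i,n})=0$ pass to the limit, after rescaling, to yield $\int_{\mathbb{R}^N}\Delta\widetilde{\phi}_i\,\Delta\psi^{j}_{1,0}\,dy=0$ for $j=0,1,\dots,N$, so the non-degeneracy of $U_{1,0}$ recalled in the Introduction forces $\widetilde{\phi}_i\equiv 0$ for every $i$.

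The final step is to upgrade this pointwise vanishing to a quantitative contradiction with $\|\phi_n\|_a=1$. Pairing the identity for $\phi_n$ with $\phi_n$ itself and using $c^{i,j}_n\to 0$, $\|\psi_n\|_a\to 0$, and $\phi_n\in K^{\varepsilon_n,\perp}_{(\boldsymbol{\xi}_n,\mathbf{d}_n,\mathbf{t}_n)}$ gives
\[
\|\phi_n\|_a^2 \;=\; \int_{\Omega} a(x)\,f'_{\varepsilon_n}(V_n)\,\phi_n^{\,2}\,dx \;+\; o(1).
\]
Splitting $\Omega$ into the balls $B(\xi_{i,n},R\delta_{i,n})$ and their complement, the contribution of each ball equals, after rescaling, $\int_{B(0,R)}(a(\xi_{i,n})+o(1))\,f'_0(U_{1,0})\,\widetilde{\phi}_{i,n}^{\,2}\,dy$, which vanishes by the local strong convergence $\widetilde{\phi}_{i,n}\to 0$ in $L^{p}_{\mathrm{loc}}(\mathbb{R}^N)$; the remaining tail is controlled by $\|f'_0(U_{1,0})\|_{L^{N/4}(|y|\ge R)}\|\phi_n\|_{L^p}^{2}$, which is made arbitrarily small by choosing $R$ large. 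Hence $\|\phi_n\|_a\to 0$, the desired contradiction.

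\textbf{Main obstacle.} The most delicate point is the blow-up step: controlling the Lagrange multipliers $c^{i,j}_n$ and rigorously passing to the limit in the rescaled equation at the critical exponent. The former requires the almost-orthogonality of the family $\{P\psi^{j}_{i,n}\}$, which in turn depends on precise boundary estimates of the Green's function of $\Delta(a\Delta\cdot)$ and its derivatives (Section \ref{sect:Green-function}); the latter requires carefully tracking the concentration scales when two bubbles may collapse to the same boundary point, as happens in the setting of Theorem \ref{thm2}.
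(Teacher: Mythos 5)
The paper gives no proof for this proposition; it simply cites \cite{musso2002multispike} as the model argument. Your blow-up contradiction argument is precisely the approach used there (and in the companion references \cite{bahri1995variational}, \cite{bartsch2006existence}), so you are following essentially the same route the authors intended: normalize $\|\phi_n\|_a=1$, rescale around each concentration point, kill the Lagrange multipliers via near-orthogonality of the $P\psi^{j}_{i}$, identify the weak limit as a kernel element of the linearized limit operator, invoke non-degeneracy to get $\widetilde\phi_i\equiv 0$, and then close the argument by the Hölder/tail splitting that shows $\|\phi_n\|_a\to 0$, a contradiction; invertibility then follows from the Fredholm alternative since $\phi\mapsto \Pi^{\varepsilon,\perp}\,i^*(f'_\varepsilon(V)\phi)$ is compact. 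Your Hölder exponents in the tail estimate are consistent ($f_0'(U_{1,0})\in L^{N/4}$ since $U_{1,0}^{8/(N-4)}\sim |y|^{-8}$), and your remark that in the regime \eqref{typ2} the bubble centers separate at a slower scale than $\delta_{i,n}$ (because $\tfrac{N-3}{N-4}>1$) is exactly the point that keeps the two bubbles asymptotically decoupled after rescaling; this matches the paper's setup. No gap detected; the proposal is a correct rendering of the standard proof the paper defers to.
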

	
	\begin{proof}
The proof of this proposition is standard and, with the necessary modifications, can be established by following, for example, the approach outlined in \cite{musso2002multispike}.
	\end{proof}

	\bigskip
	From the previous proposition, by using the inverse of the operator  $L^{\varepsilon}_{(\boldsymbol{\xi},\textbf{d}, \textbf{t})} $, for any sufficiently small $\varepsilon>0$ we can define the operator $T^{\varepsilon}_{(\boldsymbol{\xi},\textbf{d}, \textbf{t})} : K^{\varepsilon,\perp}_{(\boldsymbol{\xi},\textbf{d}, \textbf{t})} \to K^{\varepsilon,\perp}_{(\boldsymbol{\xi},\textbf{d}, \textbf{t})}$ by
	\begin{equation*}
		T^{\varepsilon}_{(\boldsymbol{\xi},\textbf{d}, \textbf{t})} \phi =L^{\varepsilon,-1}_{(\boldsymbol{\xi},\textbf{d}, \textbf{t})}\Pi^{\varepsilon,\perp}_{(\boldsymbol{\xi},\textbf{d}, \textbf{t})}(-V_{(\boldsymbol{\xi},\textbf{d}, \textbf{t})}-i^{*}(f'_{\varepsilon}(V_{(\boldsymbol{\xi},\textbf{d}, \textbf{t})})\phi)+i^{*}(f_{\varepsilon}(V_{(\boldsymbol{\xi},\textbf{d}, \textbf{t})}+\phi))).
	\end{equation*}
	
	\bigskip
	
	\begin{proposition}
		For any compact subset $C$ of $\Sigma$ there exist $\varepsilon_0 > 0$, $c >0$  and  $\sigma>0$ such that for each $\varepsilon \in (0, \varepsilon_0)$ and $(\boldsymbol{\xi},\textbf{d}, \textbf{t}) \in C$ the operator $T^{\varepsilon}_{(\boldsymbol{\xi},\textbf{d}, \textbf{t})} $ is a contraction map and
		\[
		\lVert T^{\varepsilon}_{(\boldsymbol{\xi},\textbf{d}, \textbf{t})} \phi \rVert \leq  c\varepsilon^{\frac{1}{2}+\sigma} \quad \forall \phi \in K^{\varepsilon,\perp}_{(\boldsymbol{\xi},\textbf{d}, \textbf{t})}.
		\]
	\end{proposition}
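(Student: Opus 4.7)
The plan is to run a standard contraction-mapping fixed-point argument of Lyapunov--Schmidt type, using the invertibility result from the preceding proposition as the starting point. Writing $N^{\varepsilon}(\phi) := f_{\varepsilon}(V_{(\boldsymbol{\xi},\mathbf{d},\mathbf{t})}+\phi) - f_{\varepsilon}(V_{(\boldsymbol{\xi},\mathbf{d},\mathbf{t})}) - f'_{\varepsilon}(V_{(\boldsymbol{\xi},\mathbf{d},\mathbf{t})})\phi$ and $R^{\varepsilon} := f_{\varepsilon}(V_{(\boldsymbol{\xi},\mathbf{d},\mathbf{t})}) - V_{(\boldsymbol{\xi},\mathbf{d},\mathbf{t})}$ (viewed through $i^{*}$ in the latter case), the definition of $T^{\varepsilon}_{(\boldsymbol{\xi},\mathbf{d},\mathbf{t})}$ rewrites as
\[
T^{\varepsilon}_{(\boldsymbol{\xi},\mathbf{d},\mathbf{t})}\phi \;=\; L^{\varepsilon,-1}_{(\boldsymbol{\xi},\mathbf{d},\mathbf{t})}\,\Pi^{\varepsilon,\perp}_{(\boldsymbol{\xi},\mathbf{d},\mathbf{t})}\bigl(i^{*}(f_{\varepsilon}(V_{(\boldsymbol{\xi},\mathbf{d},\mathbf{t})}))-V_{(\boldsymbol{\xi},\mathbf{d},\mathbf{t})} + i^{*}(N^{\varepsilon}(\phi))\bigr).
\]
The previous proposition gives $\|L^{\varepsilon,-1}_{(\boldsymbol{\xi},\mathbf{d},\mathbf{t})}\| \leq c$, and $\Pi^{\varepsilon,\perp}_{(\boldsymbol{\xi},\mathbf{d},\mathbf{t})}$ is a contraction, so everything reduces to controlling $\|i^{*}(f_{\varepsilon}(V_{(\boldsymbol{\xi},\mathbf{d},\mathbf{t})}))-V_{(\boldsymbol{\xi},\mathbf{d},\mathbf{t})}\|_{a}$ and $\|i^{*}(N^{\varepsilon}(\phi))\|_{a}$.

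For the first quantity --- the error introduced by the ansatz --- I would invoke the estimate of the form
\[
\bigl\|i^{*}(f_{\varepsilon}(V_{(\boldsymbol{\xi},\mathbf{d},\mathbf{t})}))-V_{(\boldsymbol{\xi},\mathbf{d},\mathbf{t})}\bigr\|_{a} \;\leq\; C\,\varepsilon^{\frac{1}{2}+\sigma}
\]
to be proved in Section \ref{sect:Error}. This is the main technical ingredient and the real obstacle of the whole reduction: it requires the boundary Green-function estimates from Section \ref{sect:Green-function}, together with careful bounds on $V_{(\boldsymbol{\xi},\mathbf{d},\mathbf{t})}-\sum_j(-1)^{b_j}U_{\delta_j,\xi_j}$ (the difference between the projected and non-projected bubbles on a domain with bubbles concentrating at the boundary), and a careful expansion of $f_{\varepsilon}(\sum(-1)^{b_j}PU_{\delta_j,\xi_j}) - \sum f_{\varepsilon}(PU_{\delta_j,\xi_j})$ accounting for the interaction between the bubbles when they are allowed to collapse to the same boundary point as in Theorem \ref{thm2}. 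I will assume this bound here and refer to the later sections for its proof.

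For the nonlinear term I would use the pointwise inequality
\[
|N^{\varepsilon}(\phi)| \;\leq\; C\bigl(|\phi|^{p-1-\varepsilon} + |V_{(\boldsymbol{\xi},\mathbf{d},\mathbf{t})}|^{p-3-\varepsilon}|\phi|^{2}\bigr)
\]
(with the first term dominating when $p-2<1$ for large $N$, and the second included when $p-3>0$), combined with the bound $\|i^{*}(u)\|_{a} \leq c\|u\|_{\frac{2N}{N+4}}$ and Hölder/Sobolev on $|\phi|^{p-1-\varepsilon}$ and $|V|^{p-3-\varepsilon}|\phi|^{2}$, yielding
\[
\|i^{*}(N^{\varepsilon}(\phi))\|_{a} \;\leq\; C\bigl(\|\phi\|_{a}^{\min(2,p-1-\varepsilon)} + \|\phi\|_{a}^{p-1-\varepsilon}\bigr).
\]
An analogous estimate controls $\|i^{*}(N^{\varepsilon}(\phi_{1})-N^{\varepsilon}(\phi_{2}))\|_{a}$ by $C\bigl(\|\phi_{1}\|_{a}+\|\phi_{2}\|_{a}\bigr)^{\alpha}\|\phi_{1}-\phi_{2}\|_{a}$ for some $\alpha>0$.

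To conclude, restrict $T^{\varepsilon}_{(\boldsymbol{\xi},\mathbf{d},\mathbf{t})}$ to the closed ball
$B_{\varepsilon}:=\{\phi\in K^{\varepsilon,\perp}_{(\boldsymbol{\xi},\mathbf{d},\mathbf{t})}:\|\phi\|_{a}\leq \rho\,\varepsilon^{\frac{1}{2}+\sigma}\}$
with $\rho$ chosen large enough that the error estimate plus the nonlinear contribution (which is of strictly higher order in $\varepsilon^{\frac{1}{2}+\sigma}$) keeps $T^{\varepsilon}_{(\boldsymbol{\xi},\mathbf{d},\mathbf{t})}(B_{\varepsilon})\subset B_{\varepsilon}$, and so that the Lipschitz bound gives a contraction constant strictly smaller than one, uniformly in $(\boldsymbol{\xi},\mathbf{d},\mathbf{t})\in C$. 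The Banach fixed-point theorem then yields existence of a unique fixed point $\phi_{(\boldsymbol{\xi},\mathbf{d},\mathbf{t})}^{\varepsilon}\in B_{\varepsilon}$, and in particular the asserted bound $\|T^{\varepsilon}_{(\boldsymbol{\xi},\mathbf{d},\mathbf{t})}\phi\|_{a}\leq c\varepsilon^{\frac{1}{2}+\sigma}$. The compactness of $C$ enters only to make all implicit constants in the Green-function estimates and in the inverse bound for $L^{\varepsilon}$ uniform.
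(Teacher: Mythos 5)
Your proposal is correct and follows essentially the same route as the paper: the paper's own proof consists of citing the error estimate $\|E_{(\boldsymbol{\xi},\textbf{d},\textbf{t})}\| = O(\varepsilon^{\frac{1}{2}+\sigma})$ from Lemma \ref{error} and then deferring the remaining contraction-mapping bookkeeping to the reference \cite{bartsch2006existence}, which is exactly the standard argument you spell out (decomposing $T^{\varepsilon}$ into the error term $i^{*}(f_{\varepsilon}(V))-V$ plus the quadratic remainder $i^{*}(N^{\varepsilon}(\phi))$, using the uniform invertibility of $L^{\varepsilon}$ from the previous proposition, and closing via Banach's fixed-point theorem on a ball of radius $O(\varepsilon^{\frac{1}{2}+\sigma})$). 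You also correctly note the implicit restriction to such a ball, which is the intended reading of the (slightly loosely stated) ``for all $\phi \in K^{\varepsilon,\perp}$'' in the proposition.
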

	\begin{proof}
		In Lemma \ref{error},  we show that there exists $\sigma>o$ such that
		\[
		\|E_{(\boldsymbol{\xi},\textbf{d}, \textbf{t})}\| = O\left( \varepsilon^{\frac{1}{2} + \sigma} \right).
		\]
		where 
		\begin{equation}\label{Error}
			E_{(\boldsymbol{\xi},\textbf{d}, \textbf{t})}=:  \Delta(a(x)\Delta V_{(\boldsymbol{\xi},\textbf{d}, \textbf{t})}) - a(x)|V_{(\boldsymbol{\xi},\textbf{d}, \textbf{t})}|^{p-2-\varepsilon}V_{(\boldsymbol{\xi},\textbf{d}, \textbf{t})}
		\end{equation}
		is the error term. From here, we proceed as in \cite{bartsch2006existence}.
	\end{proof}
	
	\bigskip

	\begin{remark}\label{remarkphi}
		Therefore, under the conditions described above, the operator $T^{\varepsilon}_{(\boldsymbol{\xi},\textbf{d}, \textbf{t})}$ has a unique fixed point, which we denote by $\phi_{(\boldsymbol{\xi},\textbf{d}, \textbf{t})}^{\varepsilon} \in K^{\varepsilon,\perp}_{(\boldsymbol{\xi},\textbf{d}, \textbf{t})}$, and it satisfies:
		\[
		\lVert\phi_{(\boldsymbol{\xi},\textbf{d}, \textbf{t})}^{\varepsilon}  \rVert \leq  c\varepsilon^{\frac{1}{2}+\sigma}.
		\]
		Hence, we can write
		\begin{equation*}
			L^{\varepsilon}_{(\boldsymbol{\xi},\textbf{d}, \textbf{t})} \phi_{(\boldsymbol{\xi},\textbf{d}, \textbf{t})}^{\varepsilon}  =\Pi^{\varepsilon,\perp}_{(\boldsymbol{\xi},\textbf{d}, \textbf{t})}\left(-V_{(\boldsymbol{\xi},\textbf{d}, \textbf{t})}-i^{*}(f'_{\varepsilon}(V_{(\boldsymbol{\xi},\textbf{d}, \textbf{t})})\phi_{(\boldsymbol{\xi},\textbf{d}, \textbf{t})}^{\varepsilon} )+i^{*}\left(f_{\varepsilon}\left(V_{(\boldsymbol{\xi},\textbf{d}, \textbf{t})}+\phi_{(\boldsymbol{\xi},\textbf{d}, \textbf{t})}^{\varepsilon} \right)\right)\right)
		\end{equation*}
		which shows that  the element  $\phi_{(\boldsymbol{\xi},\textbf{d}, \textbf{t})}^{\varepsilon} $  satisfies equation \eqref{paort}
		Moreover, that the map $$(\boldsymbol{\xi},\textbf{d}, \textbf{t}) \mapsto \phi_{(\boldsymbol{\xi},\textbf{d}, \textbf{t})}^{\varepsilon} $$ is of class $C^1$.
	\end{remark}
	Since  $\phi_{(\boldsymbol{\xi},\textbf{d}, \textbf{t})}^{\varepsilon} $  satisfies equation \eqref{paort}, we have that:
	\begin{equation*}
		\begin{aligned}
			&V_{(\boldsymbol{\xi},\textbf{d}, \textbf{t})}+\phi_{(\boldsymbol{\xi},\textbf{d}, \textbf{t})}^{\varepsilon}-i^{*}\left(f_{\varepsilon}\left(V_{(\boldsymbol{\xi},\textbf{d}, \textbf{t})}+\phi_{(\boldsymbol{\xi},\textbf{d}, \textbf{t})}^{\varepsilon}\right)\right)\\
			=&\sum_{\substack{i=1,\dots,k\\j=0,\dots,N}}\left( V_{(\boldsymbol{\xi},\textbf{d}, \textbf{t})}+\phi_{(\boldsymbol{\xi},\textbf{d}, \textbf{t})}^{\varepsilon}-i^{*}\left(f_{\varepsilon}\left(V_{(\boldsymbol{\xi},\textbf{d}, \textbf{t})}+\phi_{(\boldsymbol{\xi},\textbf{d}, \textbf{t})}^{\varepsilon}\right)\right),P\psi_{i}^{j}\right) 
		\end{aligned}
	\end{equation*}
	Hence, $V_{(\boldsymbol{\xi},\textbf{d}, \textbf{t})}+\phi_{(\boldsymbol{\xi},\textbf{d}, \textbf{t})}^{\varepsilon}$ is a solution to \eqref{Prob} if  and only if:
	\begin{equation}\label{consol}
		\begin{aligned}
			\sum_{\substack{i=1,\dots,k\\j=0,\dots,N}}\left( V_{(\boldsymbol{\xi},\textbf{d}, \textbf{t})}+\phi_{(\boldsymbol{\xi},\textbf{d}, \textbf{t})}^{\varepsilon}-i^{*}\left(f_{\varepsilon}\left(V_{(\boldsymbol{\xi},\textbf{d}, \textbf{t})}+\phi_{(\boldsymbol{\xi},\textbf{d}, \textbf{t})}^{\varepsilon}\right)\right),P\psi_{i}^{j}\right) =0
		\end{aligned}
	\end{equation}
	\bigskip
Recall that, the solutions to problem \eqref{Prob} can be characterized as the critical points of the functional $J_{\varepsilon}$. We define the reduced energy functional \( \mathcal{I}_\varepsilon^{i} : \Sigma_{i} \rightarrow \mathbb{R} \) as
\[
\mathcal{I}^{i}_\varepsilon(\boldsymbol{\xi}, \textbf{d}, \textbf{t}) = J_\varepsilon\left(V_{(\boldsymbol{\xi}, \textbf{d}, \textbf{t})}^\varepsilon + \phi_{(\boldsymbol{\xi}, \textbf{d}, \textbf{t})}^\varepsilon\right),
\]
where, depending on the case:
- If \( (\boldsymbol{\xi}, \textbf{d}, \textbf{t}) \in \Sigma_{1} \), \( J_{\varepsilon} \) is given by \eqref{Energy}.
- If \( (\zeta^0, \textbf{d}, \textbf{t}) \in \Sigma_{2} \), \( J_{\varepsilon} \) is defined as
\begin{equation}
	J_{\varepsilon} : \mathcal{H}(\Omega) \to \mathbb{R}, \quad J_{\varepsilon}(u) = \frac{1}{2} \int_{\Omega} a(x) |\Delta u(x)|^{2} \, dx- \frac{1}{p-\varepsilon} \int_{\Omega} a(x) |u(x)|^{p-\varepsilon} \,dx.
\end{equation}

 The next lemma, following from Remark \ref{remarkphi} and equation \eqref{consol}, reduces the problem of finding solutions to \eqref{Prob} to the problem of finding critical points of the reduced energy functional $\mathcal{I}_\varepsilon $.
	\begin{lemma}\label{reduc}
		The element $ (\boldsymbol{\xi},\textbf{d}, \textbf{t}) \in \Sigma $ is a critical point of $\mathcal{I}_\varepsilon$ if and only if the function \( u_\varepsilon = V_{(\boldsymbol{\xi},\textbf{d}, \textbf{t})}^\varepsilon + \phi_{(\boldsymbol{\xi},\textbf{d}, \textbf{t})}^\varepsilon \) is a critical point of the functional \( J_\varepsilon \).
	\end{lemma}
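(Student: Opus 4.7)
The plan is to exploit the Hilbert-space decomposition $H=K^{\varepsilon}_{(\boldsymbol{\xi},\textbf{d},\textbf{t})}\oplus K^{\varepsilon,\perp}_{(\boldsymbol{\xi},\textbf{d},\textbf{t})}$, where $H$ denotes $H^{2}(\Omega)\cap H^{1}_{0}(\Omega)$ in case 1 and $\mathcal{H}(\Omega)$ in case 2, together with the fact that, by Remark \ref{remarkphi}, the element $\phi^\varepsilon:=\phi_{(\boldsymbol{\xi},\textbf{d},\textbf{t})}^{\varepsilon}\in K^{\varepsilon,\perp}_{(\boldsymbol{\xi},\textbf{d},\textbf{t})}$ satisfies the orthogonal-projection equation \eqref{paort}. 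Via the inner product $(\cdot,\cdot)$, the Riesz representer of $J'_\varepsilon(u_\varepsilon)$ is the element $V_{(\boldsymbol{\xi},\textbf{d},\textbf{t})}+\phi^\varepsilon-i^{*}(f_\varepsilon(V_{(\boldsymbol{\xi},\textbf{d},\textbf{t})}+\phi^\varepsilon))$, which by \eqref{paort} belongs entirely to $K^{\varepsilon}_{(\boldsymbol{\xi},\textbf{d},\textbf{t})}$. Consequently one may write
\[
J'_\varepsilon(u_\varepsilon)[\,\cdot\,]=\sum_{i,j} c_{ij}^{\varepsilon}\,\bigl(P\psi_i^j,\,\cdot\,\bigr)
\]
for uniquely determined multipliers $c_{ij}^{\varepsilon}$, and the lemma reduces to showing that all $c_{ij}^{\varepsilon}$ vanish precisely when $(\boldsymbol{\xi},\textbf{d},\textbf{t})$ is a critical point of $\mathcal{I}_\varepsilon$.

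The forward implication is trivial by the chain rule: if $J'_\varepsilon(u_\varepsilon)=0$, then $\partial_\alpha\mathcal{I}_\varepsilon=J'_\varepsilon(u_\varepsilon)[\partial_\alpha V_{(\boldsymbol{\xi},\textbf{d},\textbf{t})}+\partial_\alpha\phi^\varepsilon]=0$ for every parameter $\alpha\in\{\xi_i^0,d_i,t_i\}$. For the converse I would again differentiate $\mathcal{I}_\varepsilon=J_\varepsilon(V_{(\boldsymbol{\xi},\textbf{d},\textbf{t})}+\phi^\varepsilon)$ parameter by parameter, substitute the Lagrange-multiplier expansion above, and obtain the square linear system
\[
\sum_{i,j} c_{ij}^\varepsilon\,\bigl(P\psi_i^j,\,\partial_\alpha V_{(\boldsymbol{\xi},\textbf{d},\textbf{t})}+\partial_\alpha\phi^\varepsilon\bigr)=0.
\]
It then suffices to prove that the matrix $M^{\varepsilon}_{(i,j),\alpha}:=(P\psi_i^j,\,\partial_\alpha V_{(\boldsymbol{\xi},\textbf{d},\textbf{t})}+\partial_\alpha\phi^\varepsilon)$ is nonsingular for every sufficiently small $\varepsilon$, uniformly on compact subsets of $\Sigma$, which forces every $c_{ij}^\varepsilon=0$ and hence $J'_\varepsilon(u_\varepsilon)=0$. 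In the symmetric situation of $\Sigma_{2}$, Remark \ref{RemSim} restricts the admissible directions $P\psi_i^j$ that survive in $\mathcal{H}(\Omega)$ to the radial index $j=0$ and the normal direction $j=\nu$, yielding again as many equations as parameters $(d_1,d_2,t_1,t_2)$.

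The main obstacle is the invertibility of $M^\varepsilon$. To handle it I would exploit the explicit dependence of $V_{(\boldsymbol{\xi},\textbf{d},\textbf{t})}$ on the parameters: $\partial_{d_i}V=(-1)^{b_i}\varepsilon^{\frac{N-3}{N-4}}P\psi_i^0$; $\partial_{t_i}V$ equals $(-1)^{b_i}\varepsilon$ times $P(\partial_{\nu}U_{\delta_i,\xi_i})$, i.e., a fixed linear combination of $P\psi_i^j$ for $j=1,\dots,N$; and tangential displacements of $\xi_i^0$ produce an $O(1)$ linear combination of the corresponding tangential projections, up to lower-order corrections coming from the dependence of $\nu(\xi_i^0)$ on the base point. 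Pairing each parameter with its naturally associated index therefore renders the leading part of $M^\varepsilon$ block-diagonal, whose diagonal entries reduce to nonzero multiples of the norms $\|P\psi_i^j\|_{a}^{2}$; standard asymptotics for the inner products $(P\psi_i^j,P\psi_{i'}^{j'})$, as developed e.g.\ in \cite{musso2002multispike}, show that these norms remain bounded away from zero and that cross interactions between distinct blow-up points are of strictly smaller order. Finally, the correction term $(P\psi_i^j,\partial_\alpha\phi^\varepsilon)$ is handled by differentiating the orthogonality $(\phi^\varepsilon,P\psi_{i}^{j})=0$ to obtain $(P\psi_i^j,\partial_\alpha\phi^\varepsilon)=-(\phi^\varepsilon,\partial_\alpha P\psi_i^j)$ and invoking the bound $\|\phi^\varepsilon\|_{a}=O(\varepsilon^{\frac{1}{2}+\sigma})$ from Remark \ref{remarkphi}, which makes this contribution negligible compared with the diagonal entries. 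Once $M^\varepsilon$ is nonsingular uniformly on compact subsets of $\Sigma$, the stated equivalence follows.
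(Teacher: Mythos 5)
Your proposal is correct and follows the standard Lagrange--multiplier argument that the paper explicitly omits, deferring to \cite{bahri1995variational}: one shows that $J_\varepsilon'(u_\varepsilon)$ is represented by an element of $K^{\varepsilon}_{(\boldsymbol{\xi},\textbf{d},\textbf{t})}$, differentiates $\mathcal{I}_\varepsilon$ in each parameter, and proves the resulting square linear system for the multipliers $c_{ij}^\varepsilon$ is nonsingular by diagonal dominance after pairing each parameter with its associated kernel direction. One small imprecision worth flagging: the diagonal entries are scalar multiples of $\|P\psi_i^j\|_a^2$ with $\varepsilon$-dependent prefactors ($\varepsilon^{\frac{N-3}{N-4}}$, $\varepsilon$, or $1$ depending on whether $\alpha$ is $d_i$, $t_i$, or tangential), and $\|P\psi_i^j\|_a^2$ itself grows like $\delta_i^{-2}$, so these entries are not literally ``bounded away from zero''; the correct statement is that, after row/column normalization, the cross and $\partial_\alpha\phi^\varepsilon$ terms are of strictly smaller order than the corresponding diagonal entries, which is exactly what your estimate $\|\phi^\varepsilon\|_a=O(\varepsilon^{1/2+\sigma})$ combined with $(P\psi_i^j,\partial_\alpha\phi^\varepsilon)=-(\phi^\varepsilon,\partial_\alpha P\psi_i^j)$ delivers.
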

	
\begin{proof}
The proof of this proposition follows standard arguments and can be obtained, with suitable modifications, by using the approach outlined in \cite{bahri1995variational}. We omit it here.
\end{proof}


	\bigskip\bigskip
	
	\section{Energy functional expansion and proofs of the main Theorems}\label{Sect:Energy-expansion}
    
   In this section, we derive an asymptotic expansion of the reduced energy functional associated with problem \eqref{Prob} and provide the proofs of the main theorems, \ref{thm1} and \ref{thm2}.

	\subsection{The proof of  Theorem \ref{thm1}}
	
	The proof of Theorem \ref{thm1} is thus reduced to showing the existence of a critical point of the reduced energy functional $\mathcal{I}^{1}_\varepsilon:\Sigma_{1}\rightarrow \mathbb{R}$ defined by
	$$\mathcal{I}^{1}_\varepsilon(\boldsymbol{\xi},\textbf{d}, \textbf{t})=J_\varepsilon(V_{(\boldsymbol{\xi},\textbf{d}, \textbf{t}) }^\varepsilon + \phi_{(\boldsymbol{\xi},\textbf{d}, \textbf{t}) }^\varepsilon)=J_{\varepsilon}\left( \sum_{i=1}^{k}(-1)^{b_{i}}PU_{\delta_{i},\xi_{i}}++ \phi_{(\boldsymbol{\xi},\textbf{d}, \textbf{t}) }^\varepsilon\right) .$$ 
	
	We will compute an asymptotic expansion of $\mathcal{I}^{1}_\varepsilon$.
	\begin{proposition}\label{exp1}
		The asymptotic expansion
		\[
		\mathcal{I}^{1}_\varepsilon(\boldsymbol{\xi},\textbf{d}, \textbf{t})=\sum_{i=1}^{k}a(\xi_{i}^{0})\gamma_{1}\left( \frac{p-2}{2p}-\varepsilon\log (\varepsilon)\frac{N-3}{2p}\right) +\varepsilon F^{1}(\boldsymbol{\xi},\textbf{d}, \textbf{t})+o(\varepsilon)
		\]
		holds true $C^{1}$-uniformly on compact subsets of $\Sigma_{1}$. Here the
		function $F^{1}:\Sigma_{1}\rightarrow \mathbb{R}$ is given by%
		\begin{equation*}
			\begin{aligned}
				F^{1}(\boldsymbol{\xi},\textbf{d}, \textbf{t})&:=\sum_{i=1}^{k}  a(\xi_{i}^{0})\frac{p\gamma_{3}-\gamma_{1}}{p^{2}}\\
				&+\sum_{i=1}^{k}\left(\frac{p-2}{2p}t_{i}(\nabla a(\xi_{i}^{0})\cdot\eta (\xi_{i}^{0}) )\gamma_{1}+\frac{a(\xi_{i}^{0})}{2}\left( \frac{d_{i}}{2t_{i}}\right) ^{N-4}\gamma_{2}-a(\xi_{i}^{0})\frac{N-4}{2p}\log(d_{i})\gamma_{1}\right)
			\end{aligned}
		\end{equation*}
	\end{proposition}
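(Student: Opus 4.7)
The plan is to exploit the Lyapunov–Schmidt setup already built: thanks to Remark \ref{remarkphi}, the remainder $\phi^\varepsilon_{(\boldsymbol{\xi},\mathbf{d},\mathbf{t})}$ is controlled by $\|\phi^\varepsilon\|\le c\varepsilon^{1/2+\sigma}$, and Lemma \ref{error} gives $\|E_{(\boldsymbol{\xi},\mathbf{d},\mathbf{t})}\|=O(\varepsilon^{1/2+\sigma})$. A Taylor expansion of $J_\varepsilon$ around $V_{(\boldsymbol{\xi},\mathbf{d},\mathbf{t})}$ then yields
\[
J_\varepsilon(V+\phi^\varepsilon)=J_\varepsilon(V)+J_\varepsilon'(V)[\phi^\varepsilon]+O(\|\phi^\varepsilon\|^2)=J_\varepsilon(V)+O(\varepsilon^{1+2\sigma}),
\]
because $J_\varepsilon'(V)[\phi^\varepsilon]=\langle E_{(\boldsymbol{\xi},\mathbf{d},\mathbf{t})},\phi^\varepsilon\rangle$. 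The $C^1$-uniform estimate follows from the same analysis applied to $\partial_{(\boldsymbol{\xi},\mathbf{d},\mathbf{t})}\phi^\varepsilon$, using that this map is of class $C^1$ with derivatives of comparable size. So the task reduces to expanding $J_\varepsilon(V)$ itself to order $\varepsilon$.

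Next I would expand the quadratic part $\tfrac12\int a|\Delta V|^2$. Using the projection equation $\Delta^2 PU_{\delta_i,\xi_i}=U_{\delta_i,\xi_i}^{p-1}$, the boundary conditions $PU_i=\Delta PU_i=0$ on $\partial\Omega$, and integration by parts, this integral reduces to sums of the form $\int \Delta(a\Delta PU_i)PU_j\,dx$, where expanding $\Delta(a\Delta PU_i)=aU_i^{p-1}+2\nabla a\cdot\nabla\Delta PU_i+\Delta a\,\Delta PU_i$ splits off a leading piece $\int aU_i^{p-1}PU_j$ plus lower-order weight-corrections. Off-diagonal terms $i\neq j$ are $O(\delta_i^{(N-4)/2}\delta_j^{(N-4)/2})=O(\varepsilon^{N-3})=o(\varepsilon)$ since $\xi_i^0\neq\xi_j^0$ stays bounded away in $\Sigma_1$. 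For the diagonal terms, I would insert the near-boundary expansion $PU_{\delta_i,\xi_i}=U_{\delta_i,\xi_i}-\alpha_N\delta_i^{(N-4)/2}H(\cdot,\xi_i)+\text{h.o.t.}$ from Section \ref{sect:Green-function}; evaluated at $\xi_i=\xi_i^0+t_i\varepsilon\,\eta(\xi_i^0)$, the regular part $H(\xi_i,\xi_i)$ behaves (to leading order) like the image-charge Green function centered at the reflected point at distance $2t_i\varepsilon$, producing precisely the $\tfrac12 a(\xi_i^0)(d_i/2t_i)^{N-4}\gamma_2$ contribution. The Taylor expansion $a(\xi_i)=a(\xi_i^0)+t_i\varepsilon\,\nabla a(\xi_i^0)\!\cdot\!\eta(\xi_i^0)+O(\varepsilon^2)$ generates the $\tfrac{p-2}{2p}t_i(\nabla a(\xi_i^0)\!\cdot\!\eta(\xi_i^0))\gamma_1$ term after pairing with the leading piece $\tfrac{p-2}{2p}\gamma_1$ of the self-energy.

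For the nonlinear piece $\tfrac{1}{p-\varepsilon}\int a|V|^{p-\varepsilon}$, the same disjoint-concentration argument reduces it, up to $o(\varepsilon)$, to $\sum_i \tfrac{1}{p-\varepsilon}\int a|PU_{\delta_i,\xi_i}|^{p-\varepsilon}$. Using $PU_i=U_i+O(\delta_i^{(N-4)/2})$ and the substitution $x=\xi_i+\delta_i y$, one gets a scaling factor $\delta_i^{-\varepsilon(N-4)/2}$, which I would expand as
\[
\delta_i^{-\varepsilon(N-4)/2}=1-\varepsilon\frac{N-4}{2}\log d_i-\varepsilon\frac{N-3}{2}\log\varepsilon+O(\varepsilon^2).
\]
This produces simultaneously the $-\varepsilon\log\varepsilon\cdot\tfrac{N-3}{2p}\gamma_1$ global term and the $-\tfrac{N-4}{2p}\log(d_i)\gamma_1$ contribution in $F^1$. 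Expanding $\tfrac{1}{p-\varepsilon}=\tfrac{1}{p}+\tfrac{\varepsilon}{p^2}+O(\varepsilon^2)$ and using $\int U_{1,0}^{p-\varepsilon}=\gamma_1-\varepsilon\gamma_3+O(\varepsilon^2)$ (with $\gamma_3$ the logarithmic moment of $U_{1,0}^p$) then assembles the $(p\gamma_3-\gamma_1)/p^2$ term. Together with the evaluation $a(\xi_i)=a(\xi_i^0)+O(\varepsilon)$ on the concentration region, the nonlinear contribution matches the stated formula.

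The main obstacle is the sharp near-boundary expansion of the projection $PU_{\delta,\xi}$ when $\xi$ collapses toward $\partial\Omega$ at rate $O(\varepsilon)$ under the weighted operator $\Delta(a\Delta\cdot)$: one must identify the regular part of the associated Green function with an image-charge expression to leading order, so that the $(d_i/2t_i)^{N-4}$ factor appears cleanly. This is precisely the content of the boundary estimates prepared in Section \ref{sect:Green-function}, and it is what makes the $a$-weighted case more delicate than the $a\equiv 1$ situation. Once these estimates are in place, the remaining bookkeeping consists in collecting the $O(\varepsilon)$ contributions from the three sources identified above (weight Taylor expansion, Green-function self-interaction, and the $\delta_i^\varepsilon$ logarithmic terms), which together produce the announced expansion $C^1$-uniformly on compact subsets of $\Sigma_1$; the $C^1$ statement is obtained by differentiating each of these expansions in $(\boldsymbol{\xi},\mathbf{d},\mathbf{t})$ and verifying that the same error bounds persist.
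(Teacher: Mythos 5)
Your proposal follows essentially the same route as the paper: reduce to $J_\varepsilon(V)$ (the paper does this implicitly), integrate by parts on the quadratic part $\int a|\Delta V|^2$ to isolate $\int a\,\Delta^2(PU_i)\,PU_j$ plus lower-order weight corrections, discard off-diagonal terms as $o(\varepsilon)$, use the near-boundary expansion $PU_{\delta,\xi}=U_{\delta,\xi}-\alpha_N\delta^{(N-4)/2}H(\cdot,\xi)+R_{\delta,\xi}$ together with the image-charge behaviour of $H$ at the reflected point for the diagonal self-interaction, and extract the $\varepsilon\log\varepsilon$ and $\log d_i$ terms from the slightly subcritical power in the nonlinear integral. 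This is precisely the structure of the paper's proof (via Propositions \ref{tmij}, \ref{propij}, \ref{tmi} and Lemma \ref{lemma:NL}). Two small inaccuracies deserve a note. First, the scaling factor in the nonlinear term has the wrong sign: substituting $x=\xi_i+\delta_i y$ in $\int|U_{\delta_i,\xi_i}|^{p-\varepsilon}$ gives $\delta_i^{\,N-(p-\varepsilon)(N-4)/2}=\delta_i^{+\varepsilon(N-4)/2}$, not $\delta_i^{-\varepsilon(N-4)/2}$; with the correct positive exponent one obtains $\delta_i^{\varepsilon(N-4)/2}=1+\tfrac{\varepsilon(N-4)}{2}\log d_i+\tfrac{\varepsilon(N-3)}{2}\log\varepsilon+o(\varepsilon)$, which after being subtracted inside $J_\varepsilon$ produces the negative log contributions of the statement — your intermediate sign, taken literally, would flip them. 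Second, the projection $P$ (and hence the relevant Green function of Section \ref{sect:Green-function}) is associated with the unweighted biharmonic $\Delta^2$ with Navier conditions, not with $\Delta(a\Delta\cdot)$; the weight $a$ enters only as a smooth factor inside the integrals and is handled by a Taylor expansion at $\xi_i^0$, not by changing the Green function. These are bookkeeping slips rather than structural gaps, but in a computation where every sign matters one should fix them before trusting the final coefficients.
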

	
	\begin{proof}
		We have that
		\begin{equation*}
			\begin{aligned}
				J(	V_{(\boldsymbol{\xi},\textbf{d}, \textbf{t})}):&=\frac{1}{2} \int_{\Omega}a(x)\Delta\left(	V_{(\boldsymbol{\xi},\textbf{d}, \textbf{t})} (x)\right)^{2} \,dx-\frac{1}{p-\varepsilon} \int_{\Omega}a(x)\left|	V_{(\boldsymbol{\xi},\textbf{d}, \textbf{t})}(x)\right|^{p-\varepsilon}\,dx.
			\end{aligned}
		\end{equation*}
		The term 	
		\begin{equation*}
			\frac{1}{p-\varepsilon} \int_{\Omega}a(x)\left|	V_{(\boldsymbol{\xi},\textbf{d}, \textbf{t})}(x)\right|^{p-\varepsilon}\, dx=	\frac{1}{p-\varepsilon} \int_{\Omega}a(x)|\sum_{i=1}^{k}(-1)^{\lambda_{i}}PU_{i}(x)|^{p-\varepsilon} \, dx,
		\end{equation*}
		is estimated in Lemma \ref{lemma:NL}. Next we give the expansion of the remaining term. 	Notice that:
		\begin{equation*}
			\begin{aligned}
				&\int_{\Omega}a(x)\Delta\left(	V_{(\boldsymbol{\xi},\textbf{d}, \textbf{t})}(x)\right)^{2}\,dx=	 \int_{\Omega}a(x)(\Delta(\sum_{i=1}^{k}(-1)^{b_{i}}PU_{\delta_{i},\xi_{i}}(x)))^{2}\,dx\\
				&=	 \sum_{i=1}^{k}\int_{\Omega}a(x)\left( \Delta(PU_{\delta_{i},\xi_{i}}(x))\right)^{2} \,dx+2\sum_{i\neq j}\sum_{i=1}^{k}(-1)^{b_{i}}(-1)^{b_{j}}\int_{\Omega}a(x)\Delta(PU_{\delta_{i},\xi_{i}}(x))\Delta(PU_{\delta_{j},\xi_{j}}(x))\,dx
			\end{aligned}
		\end{equation*}
		A direct calculation, for any $1\leq i\leq j \leq k$, shows that:
		\begin{equation*}
			\begin{aligned}
				\int_{\Omega}a(x)\Delta(PU_{\delta_{i},\xi_{i}}(x))\Delta(PU_{\delta_{j},\xi_{j}}(x))\,dx
				&=\int_{\Omega}\Delta a(x)\Delta\left(PU_{\delta_{i},\xi_{i}}\right)PU_{\delta_{j},\xi_{j}}\,dx+2\int_{\Omega}\nabla a(x)\nabla\left( \Delta(PU_{\delta_{i},\xi_{i}})\right)PU_{\delta_{j},\xi_{j}}\,dx\\
				&+\int_{\Omega}a(x)\Delta^{2}(PU_{\delta_{i},\xi_{i}})PU_{\delta_{j},\xi_{j}}\,dx\\
			\end{aligned}
		\end{equation*}
		Using Proposition \ref{est1}, for any $1\leq i\leq j \leq k$, we obtain:
		\begin{equation}\label{ql1}
			\int_{\Omega}\Delta a(x)\Delta(PU_{\delta_{i},\xi_{i}})PU_{\delta_{j},\xi_{j}}\,dx=o(\varepsilon) \mbox{ and }	\int_{\Omega}\nabla a(x)\nabla\left( \Delta(PU_{\delta_{i},\xi_{i}})\right)PU_{\delta_{j},\xi_{j}}\,dx=o(\varepsilon)
		\end{equation}
		Therefore, in the case where $i\neq j$, from equation \eqref{inj} in Proposition \ref{propij}, we obtain 
		\begin{equation}
			\begin{aligned}
				\int_{\Omega}a(x)\Delta(PU_{\delta_{i},\xi_{i}})\Delta(PU_{\delta_{j},\xi_{j}})\,dx=o(\varepsilon)
			\end{aligned}
		\end{equation}
		For the other case,  by Proposition \ref{tmi}, we obtain:
		\begin{equation}\label{ql2}
			\begin{aligned}
				&\int_{\Omega}a(x)\Delta^{2}(PU_{\delta_{i},\xi_{i}})PU_{\delta_{i},\xi_{i}}\,dx=	\int_{\Omega}a(x)U_{\delta_{i},\xi_{i}}^{\frac{N+4}{N-4}}PU_{\delta_{i},\xi_{i}}\,dx\\
				&=		\int_{\Omega}a(x)U_{\delta_{i},\xi_{i}}^{\frac{2N}{N-4}} \,dx+\int_{\Omega}a(x)U_{\delta_{i},\xi_{i}}^{\frac{N+4}{N-4}}\left(PU_{\delta,\xi}(x)-U_{\delta,\xi}(x) \right)\,dx \\
				&=a(\xi_{i}^{0})\gamma_{1}+\varepsilon t_{i}\nabla a(\xi_{i}^{0})\cdot\nu(\xi_{i}^{0})\gamma_{1}
				-\varepsilon\left( \frac{d_{i}}{2t_{i}}\right) ^{N-4}a(\xi_{i}^{0})\gamma_{2}+o(\varepsilon^{1+\sigma}).
			\end{aligned}
		\end{equation}
		Therefore, from the equations \eqref{ql1} and \eqref{ql2}, it follows:
		\begin{equation*}
			\begin{aligned}
				&\frac{1}{2} \int_{\Omega}a(x)\Delta(	V_{(\boldsymbol{\xi},\textbf{d}, \textbf{t})})^{2}\,dx\\
				&=\frac{1}{2}\sum_{i=1}^{k}\left(  a(\xi_{i}^{0})\gamma_{1}+ \varepsilon t_{i}\nabla a(\xi_{i}^{0})\cdot\nu(\xi_{i}^{0})\gamma_{1}-\varepsilon\left( \frac{d_{i}}{2t_{i}}\right) ^{N-4}a(\xi_{i}^{0})\gamma_{2}\right) +o(\varepsilon^{1+\sigma}).
			\end{aligned}
		\end{equation*}
	\end{proof}
	
\bigskip

	\begin{proof}{of Theorem \eqref{thm1}}
		
		By Proposition \ref{exp1} we have that 
		$$
		\mathcal{I}^{1}_\varepsilon(\boldsymbol{\xi},\textbf{d}, \textbf{t})=\sum_{i=1}^{k}a(\xi_{i}^{0})\gamma_{1}\left( \frac{p-1}{2(p+1)}-\varepsilon\log (\varepsilon)\frac{n-3}{2(p+1)}\right) +O(\varepsilon),$$
		$C^{1}$-uniformly on compact subsets of $\Sigma_{1}$. Since we are assuming that   $\zeta_{1}^{0},\dots ,\zeta_{k}^{0}\in\partial\Omega$ are  nondegenerated critical points of the function $a_{|\partial\Omega}$  we have that, for $\varepsilon$ small enough, there exists $\boldsymbol{\zeta_{\varepsilon}}=(\zeta_{\varepsilon,1}^{0},\dots , \zeta_{\varepsilon,k}^{0})\in(\partial\Omega)^{k}$, such that $\nabla_{\boldsymbol{\xi}}\mathcal{I}^{1}_\varepsilon(\boldsymbol{\zeta_{\varepsilon}},\textbf{d}, \textbf{t})=0$ and $\zeta_{i,\varepsilon}^{0}\rightarrow \zeta_{i}^{0}$ as $\varepsilon$ goes to zero for every $i=1,\dots,k$.
		
		Proposition \ref{exp1} also implies that
		\begin{equation}\label{simpl1}
			\begin{aligned}
				\mathcal{I}^{1}_\varepsilon(\boldsymbol{\zeta_{\varepsilon}},\textbf{d}, \textbf{t})&=\sum_{i=1}^{k}a(\zeta_{\varepsilon,i}^{0})\gamma_{1}\left( \frac{p-1}{2(p+1)}-\varepsilon\log (\varepsilon)\frac{n-3}{2(p+1)}\right) +\varepsilon F^{1}(\boldsymbol{\zeta_{\varepsilon}},\textbf{d}, \textbf{t})+o(\varepsilon)\\
				&=\sum_{i=1}^{k}a(\zeta_{i}^{0})\gamma_{1}\left( \frac{p-1}{2(p+1)}-\varepsilon\log (\varepsilon)\frac{n-3}{2(p+1)}\right) +\varepsilon F_{0}^{1}(\textbf{d}, \textbf{t})+o(\varepsilon)
			\end{aligned}
		\end{equation}
		where $F^{1}_{0}:\{ (\textbf{d}, \textbf{t})\in (0,\infty)^{k}\times (0,\infty)^{k} \}\rightarrow \mathbb{R}$ is defined by 
		\begin{equation*}
			\begin{aligned}
				F^{1}_{0}(\textbf{d}, \textbf{t})&:=\sum_{i=1}^{k}\left(  a(\zeta_{i}^{0})\frac{(p+1)\gamma_{3}-\gamma_{1}}{(p+1)^{2}}+\frac{p-1}{2(p+1)}t_{i}(\nabla a(\zeta_{i}^{0})\cdot\eta (\zeta_{i}^{0}) )\gamma_{1}\right)\\
				&+\sum_{i=1}^{k}\left(\frac{a(\zeta_{i}^{0})}{2}\left( \frac{d_{i}}{2t_{i}}\right) ^{n-4}\gamma_{2}-a(\zeta_{i}^{0})\frac{n-4}{2(p+1)}\log(d_{i})\gamma_{1}\right)
			\end{aligned}
		\end{equation*}
		Since $(\nabla a(\zeta_{i}^{0})\cdot\eta (\zeta_{i}^{0}))>0$ for every $i=1,\dots,k$, see \eqref{cond1}, the function $F^{1}_{0}$ has a critical point which is a strict minimum and stable. Hence, for $\varepsilon$ is small enough, it follows from equation \eqref{simpl1} that  there exists $(\textbf{d}_{\varepsilon}, \textbf{t}_{\varepsilon})$ such that $\nabla_{(\textbf{d}, \textbf{t})}\mathcal{I}^{1}_\varepsilon(\boldsymbol{\zeta^{0}_{\varepsilon}},\textbf{d}_{\varepsilon}, \textbf{t}_{\varepsilon})=0$. We conclude that the functional  $\mathcal{I}^{1}_\varepsilon$ has a critical point. Finally,  the proof of Theorem \ref{thm1}  follows from Lemma \ref{reduc}. 
	\end{proof}

	\subsection{The proof of Theorem \ref{thm2}}

	The proof of Theorem \ref{thm2} is thus reduced to show the existence of a critical point of the reduced energy functional $\mathcal{I}^{2}_\varepsilon:\Sigma_{2}\rightarrow \mathbb{R}$ defined by
	$$\mathcal{I}^{2}_\varepsilon(\zeta^{0},\textbf{d}, \textbf{t})=\mathcal{I}^{2}_\varepsilon(\textbf{d}, \textbf{t})=J_{\varepsilon}\left(	V_{(\zeta^{0},\textbf{d}, \textbf{t})}+ \phi_{(\zeta^{0},\textbf{d}, \textbf{t}) }^\varepsilon\right)=J_{\varepsilon}\left( PU_{\delta_{1},\xi_{1}}-PU_{\delta_{2},\xi_{2}}+ \phi_{(\zeta^{0},\textbf{d}, \textbf{t}) }^\varepsilon\right) .$$ 
	
	\bigskip
	
	We will compute an asymptotic expansion of $\mathcal{I}^{2}_\varepsilon$.
	\begin{proposition}\label{exp2}
		The asymptotic expansion
		\[
		\mathcal{I}^{2}_\varepsilon(\zeta^{0},\textbf{d}, \textbf{t})=\mathcal{I}^{2}_\varepsilon(\textbf{d}, \textbf{t})=a(\zeta^{0})\omega_{1} +\varepsilon F^{2}(\zeta^{0},\textbf{d}, \textbf{t})+o(\varepsilon)
		\]
		holds true $C^{1}$-uniformly on compact subsets of $\Sigma_{2}$. Here the
		function $F^{2}:\Sigma_{2}\rightarrow \mathbb{R}$ is given by
		\begin{equation*}
			\begin{aligned}
				F^{2}(\zeta^{0},\textbf{d}, \textbf{t})&:=\nabla a(\zeta^{0})\cdot\nu(\zeta^{0})(t_{1}+t_{2})\omega_{2}-\left(\log (d_{1})+\log (d_{2}) \right)\omega_{3}\\
				+&a(\zeta^{0})\left( 2(d_{1}d_{2})^{\frac{N-4}{2}}\left(\frac{1}{|t_{1}-t_{2}|^{N-4}}-\frac{1}{|t_{1}+t_{2}|^{N-4}} \right)  + \left( \frac{d_{1}}{2t_{1}}\right) ^{N-4}+\left( \frac{d_{2}}{2t_{2}}\right) ^{N-4}\right) \omega_{4} 
			\end{aligned}
		\end{equation*}
		here 
		\begin{equation*}
			\omega_{1}=\frac{\gamma_{1}(p-2)}{p}+\varepsilon \left( -\frac{2\gamma_{1}}{p^{2}}+\frac{2\gamma_{3}}{p}-\log (\varepsilon)(N-3)\frac{\gamma_{1}}{p}\right)
		\end{equation*}
		and $\omega_{2}, \omega_{3}$ and $\omega_{4}$ are positive constants given by:
		\begin{equation*}
			\omega_{2}:=\frac{\gamma_{1}(p-2)}{2p}, \  \omega_{3}:=\frac{N-4}{2}\frac{\gamma_{1}}{p}  \mbox{ and } \ \omega_{4}:=\frac{\gamma_{2}}{2}
		\end{equation*}
	\end{proposition}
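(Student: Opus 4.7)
The plan is to expand $J_\varepsilon\bigl(V_{(\textbf{d},\textbf{t})} + \phi^\varepsilon_{(\zeta^0,\textbf{d},\textbf{t})}\bigr)$ and identify its leading behavior as $\varepsilon \to 0$. First, I would argue that the correction $\phi^\varepsilon$ contributes only $o(\varepsilon)$: by Remark~\ref{remarkphi} we have $\|\phi^\varepsilon\| = O(\varepsilon^{1/2+\sigma})$, so a second-order Taylor expansion of $J_\varepsilon$ at $V$ gives
\[
J_\varepsilon(V + \phi^\varepsilon) = J_\varepsilon(V) + J_\varepsilon'(V)[\phi^\varepsilon] + O(\|\phi^\varepsilon\|^2),
\]
where the linear term is controlled by pairing the error $E_{(\zeta^0,\textbf{d},\textbf{t})}$ with $\phi^\varepsilon$ and using $\|E_{(\zeta^0,\textbf{d},\textbf{t})}\|\,\|\phi^\varepsilon\|=O(\varepsilon^{1+2\sigma})$ from Lemma~\ref{error}. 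This reduces the task to expanding $J_\varepsilon(V)$. The $C^1$-uniform statement follows by differentiating the fixed-point equation for $\phi^\varepsilon$ with respect to $(\textbf{d},\textbf{t})$ and using the same bounds on the derivatives of the residual and of $\phi^\varepsilon$, along the lines of the proof of Proposition~\ref{exp1}.

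Next, I would split $J_\varepsilon(V)$ into a quadratic part $\tfrac{1}{2}\int_\Omega a|\Delta V|^2$ and a nonlinear part $-\tfrac{1}{p-\varepsilon}\int_\Omega a|V|^{p-\varepsilon}$. For the quadratic part, since $V = PU_{\delta_1,\xi_1} - PU_{\delta_2,\xi_2}$,
\[
\int_\Omega a|\Delta V|^2 = \sum_{i=1}^{2}\int_\Omega a|\Delta PU_{\delta_i,\xi_i}|^2 - 2\int_\Omega a\,\Delta PU_{\delta_1,\xi_1}\,\Delta PU_{\delta_2,\xi_2}.
\]
The diagonal terms are treated exactly as in the proof of Proposition~\ref{exp1}: integrating by parts and using $\Delta^{2} PU_{\delta_i,\xi_i} = U_{\delta_i,\xi_i}^{p-1}$, the contributions involving $\nabla a$ and $\Delta a$ are $o(\varepsilon)$ by Proposition~\ref{est1}, and the principal piece $\int_\Omega a\,U_{\delta_i,\xi_i}^{p-1} PU_{\delta_i,\xi_i}$ is expanded via Proposition~\ref{tmi} as in \eqref{ql2}, producing the leading $a(\zeta^0)\gamma_1$ together with the $\varepsilon$-order corrections $\varepsilon t_i\,\nabla a(\zeta^0)\!\cdot\!\nu(\zeta^0)\,\gamma_1$ and $-\varepsilon(d_i/2t_i)^{N-4}\,a(\zeta^0)\gamma_2$, whose sum accounts for the $(d_i/2t_i)^{N-4}\omega_4$ summand and part of the $\nabla a\!\cdot\!\nu\,\omega_2$ summand of $F^2$.

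The crucial novelty with respect to Proposition~\ref{exp1} is the cross term $\int_\Omega a\,\Delta PU_{\delta_1,\xi_1}\,\Delta PU_{\delta_2,\xi_2}$: since both bubbles here concentrate at the same point $\zeta^0$ with mutual distance $|t_1-t_2|\varepsilon$, this integral is no longer $o(\varepsilon)$ but yields a genuine $\varepsilon$-order interaction. Integrating by parts and discarding the $o(\varepsilon)$ pieces (cf.\ \eqref{ql1}), it reduces to $\int_\Omega a\,U_{\delta_1,\xi_1}^{p-1} PU_{\delta_2,\xi_2}$. Writing $PU_{\delta_2,\xi_2} = U_{\delta_2,\xi_2} - (U_{\delta_2,\xi_2}-PU_{\delta_2,\xi_2})$ and rescaling $x = \xi_1 + \delta_1 y$, the singular ``direct'' interaction yields a term proportional to $(d_1 d_2)^{(N-4)/2}\varepsilon/|t_1-t_2|^{N-4}$, while the correction $U_{\delta_2,\xi_2}-PU_{\delta_2,\xi_2}$, controlled by the regular part of the Green's function near $\partial\Omega$, gives a term proportional to $(d_1 d_2)^{(N-4)/2}\varepsilon/|t_1+t_2|^{N-4}$, essentially produced by the mirror image of $\xi_2$ across $\partial\Omega$ at $\zeta^0$. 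With the factor $-2$ from the quadratic expansion and the $\tfrac{1}{2}$ from the functional, this gives the bracketed interaction term of $F^2$ with coefficient $2\omega_4$. The nonlinear piece $\int_\Omega a|V|^{p-\varepsilon}$ is then expanded through Lemma~\ref{lemma:NL} by partitioning $\Omega$ into neighborhoods of $\xi_1$ and $\xi_2$ and using $|V|^{p-\varepsilon} = |V|^p\bigl(1-\varepsilon\log|V|+O(\varepsilon^2)\bigr)$; after rescaling, this generates the $\log\varepsilon$ contribution inside $\omega_1$, the $-\omega_3(\log d_1+\log d_2)$ term, the $\gamma_3$-contribution inside $\omega_1$, and a further $\tfrac{p-2}{2p}t_i\,\nabla a(\zeta^0)\!\cdot\!\nu(\zeta^0)\,\gamma_1$ piece that combines with the quadratic expansion into $\omega_2(t_1+t_2)\nabla a(\zeta^0)\!\cdot\!\nu(\zeta^0)$. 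Collecting everything and factoring the common $a(\zeta^0)$ yields the stated formula.

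The main obstacle will be the accurate identification of the reflected interaction term $(d_1 d_2)^{(N-4)/2}|t_1+t_2|^{-(N-4)}$: it requires precise boundary asymptotics of the biharmonic Green's function of $\Delta(a\,\Delta\,\cdot)$ with Navier conditions and of its regular part near $\zeta^0$, which is exactly why Section~\ref{sect:Green-function} develops such estimates. In Theorem~\ref{thm1} the analogous cross integrals were automatically $o(\varepsilon)$ because the bubbles sat at distinct points, so no such fine information was needed; here, by contrast, it is precisely the $|t_1-t_2|^{-(N-4)}$ and $|t_1+t_2|^{-(N-4)}$ terms that allow $F^2$ to detect the relative position of the two bubbles along the inward normal at $\zeta^0$ and, together with the symmetry assumption in $(p2)$, to produce a non-degenerate critical point. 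Once these estimates are established, the algebraic reorganization of all constants into $\omega_1,\ldots,\omega_4$ is routine.
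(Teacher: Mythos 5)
Your proposal follows the paper's argument: split $J_\varepsilon(V)$ into the quadratic and nonlinear parts, reduce the variable-coefficient pieces to $o(\varepsilon)$ by moving derivatives onto $a$, expand the diagonal self-interactions via Proposition~\ref{tmi}, identify the cross term $\int_\Omega a\,\Delta PU_{\delta_1,\xi_1}\Delta PU_{\delta_2,\xi_2}$ with the $\varepsilon$-order interaction coming from the direct kernel and its mirror image across $\partial\Omega$ (estimate~\eqref{isj} of Proposition~\ref{propij}), and expand $|V|^{p-\varepsilon}$ to first order in $\varepsilon$. That is exactly what the paper does.

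Three points need adjusting. First, the nonlinear term is treated in the paper by Lemma~\ref{nonlinearij} (built on Lemmas~\ref{LemaP1} and~\ref{LemaP2}), which is designed for the $\Sigma_2$ case where the bubbles collapse to the same boundary point; Lemma~\ref{lemma:NL}, which you cite, is its $\Sigma_1$ analogue with distinct boundary points and no interaction contribution. Second, your coefficient bookkeeping for the bracketed interaction is off: the quadratic part $\tfrac12\int_\Omega a|\Delta V|^2$ contributes the cross term with coefficient $-\gamma_2$ (the $\tfrac12\cdot(-2)=-1$ hitting the $+\gamma_2$ of~\eqref{isj}), while the nonlinear part contributes $+2\gamma_2$ through the linearization of $|V|^p$ near each $\xi_i$; the total $+\gamma_2 = 2\omega_4$ comes from this cancellation, not from the quadratic part alone as your text suggests. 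Third, the boundary estimates of Section~\ref{sect:Green-function} concern the Green's function of the constant-coefficient biharmonic $\Delta^2$ with Navier conditions, not that of $\Delta(a\,\Delta\,\cdot)$; the weight $a$ is handled separately by integration by parts (producing $\Delta a$ and $\nabla a$ terms whose negligibility is the content of Proposition~\ref{tmij}), and only then are the Green's function estimates invoked for the remaining constant-coefficient piece.
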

	\begin{proof}	
		
		\bigskip		
		We have that
		\begin{equation*}
			\begin{aligned}
				J_{\varepsilon}(	V_{(\zeta^{0},\textbf{d}, \textbf{t})}):&=\frac{1}{2} \int_{\Omega}a(x)\Delta\left(PU_{\delta_{1},\xi_{1}}-PU_{\delta_{2},\xi_{2}}\right)^{2}\,dx-\frac{1}{p-\varepsilon} \int_{\Omega}a(x)\left|PU_{\delta_{1},\xi_{1}}-PU_{\delta_{2},\xi_{2}}\right|^{p-\varepsilon}\,dx.
			\end{aligned}
		\end{equation*}
		We write:
		\begin{equation}\label{L3}
			\begin{aligned}
				& \frac{1}{2}\int_{\Omega}a(x)|\Delta(PU_{\delta_{1},\xi_{1}}-PU_{\delta_{2},\xi_{2}})|^{2}\,dx\\
				&=\frac{1}{2}\sum_{i=1}^{2}\int_{\Omega}a(x)\Delta(PU_{\delta_{i},\xi_{i}})\Delta(PU_{\delta_{i},\xi_{i}}) \,dx-\int_{\Omega}a(x)\Delta(PU_{\delta_{1},\xi_{1}})\Delta(PU_{\delta_{2},\xi_{2}})\,dx.
			\end{aligned}
		\end{equation}
		A direct calculation, for any $1\leq i\leq j \leq 2$, shows that:
		\begin{equation*}
			\begin{aligned}
				\int_{\Omega}a(x)\Delta(PU_{\delta_{i},\xi_{i}})\Delta(PU_{\delta_{j},\xi_{j}})\,dx
				&=\int_{\Omega}\Delta a(x)\Delta(PU_{\delta_{i},\xi_{i}})PU_{\delta_{j},\xi_{j}}\,dx+2\int_{\Omega}\nabla a(x)\nabla\left( \Delta(PU_{\delta_{i},\xi_{i}})\right)PU_{\delta_{j},\xi_{j}}\,dx\\
				&+\int_{\Omega}a(x)\Delta^{2}(PU_{\delta_{i},\xi_{i}})PU_{\delta_{j},\xi_{j}}\,dx\\
				&=\int_{\Omega}a(x)\Delta^{2}(PU_{\delta_{i},\xi_{i}})PU_{\delta_{j},\xi_{j}}\,dx+o(\varepsilon).\\
			\end{aligned}
		\end{equation*}
		The last equality is justified by Proposition \ref{est1}, which shows us that:
		\begin{equation*}
			\int_{\Omega}\Delta a(x)\Delta(PU_{\delta_{i},\xi_{i}})PU_{\delta_{j},\xi_{j}}\,dx=o(\varepsilon) \quad\mbox{ and }	\quad \int_{\Omega}\nabla a(x)\nabla\left( \Delta(PU_{\delta_{i},\xi_{i}})\right)PU_{\delta_{j},\xi_{j}}\,dx=o(\varepsilon),
		\end{equation*}
		for any $1\leq i\leq j \leq 2$.

		Thus, a straightforward calculation, applying estimate \eqref{isj} in Proposition \ref{propij}, shows that:
		\begin{equation}\label{L1}
			\begin{aligned}
				&\int_{\Omega}a(x)\Delta(PU_{\delta_{1},\xi_{1}})\Delta(PU_{\delta_{2},\xi_{2}})\,dx
				=\int_{\Omega}a(x)\Delta^{2}(PU_{\delta_{1},\xi_{1}})PU_{\delta_{2},\xi_{2}}\,dx+o(\varepsilon)\\
				&=\int_{B_{\eta}(\xi_{1})}a(x)\Delta^{2}(PU_{\delta_{1},\xi_{1}})PU_{\delta_{2},\xi_{2}}\,dx+\int_{\Omega\setminus B_{\eta}(\xi_{1})}a(x)\Delta^{2}(PU_{\delta_{1},\xi_{1}})PU_{\delta_{2},\xi_{2}}\,dx+o(\varepsilon)\\\\
				=& \varepsilon a(\zeta^{0})(d_{1}d_{2})^{\frac{N-4}{2}}\left(\frac{1}{|t_{1}-t_{2}|^{N-4}}-\frac{1}{|t_{1}+t_{2}|^{N-4}} \right) \gamma_{2}+o(\varepsilon)
			\end{aligned}
		\end{equation}
		
		\medskip
		
		On the other hand,  by Proposition \ref{tmi}, for $i=1,2$ we obtain:
		\begin{equation}\label{L2}
			\begin{aligned}
				&\int_{\Omega}a(x)\Delta^{2}(PU_{\delta_{i},\xi_{i}})PU_{\delta_{i},\xi_{i}}\,dx\\
				=&	\int_{B_{\eta}(\xi_{i})}a(x)U_{\delta_{i},\xi_{i}}^{\frac{N+4}{N-4}}PU_{\delta_{i},\xi_{i}}\,dx+\int_{\Omega\setminus B_{\eta}(\xi_{i})}a(x)U_{\delta_{i},\xi_{i}}^{\frac{N+4}{N-4}}PU_{\delta_{i},\xi_{i}}\,dx\\
				=&	\int_{B_{\eta}(\xi_{i})}a(x)U_{\delta_{i},\xi_{i}}^{\frac{2N}{N-4}}\,dx+\int_{B_{\eta}(\xi_{i})}a(x)U_{\delta_{i},\xi_{i}}^{\frac{N+4}{N-4}}(PU_{\delta_{i},\xi_{i}}-U_{\delta_{i},\xi_{i}})\,dx+o(\varepsilon)\\
				=&a(\zeta^{0})\gamma_{1}+\varepsilon \nabla a(\zeta^{0})\cdot\nu(\zeta^{0})t_{i}\gamma_{1}
				-\varepsilon a(\zeta^{0})\left( \frac{d_{i}}{2t_{i}}\right) ^{N-4}\gamma_{2}+o(\varepsilon)
			\end{aligned}
		\end{equation}
		
		\medskip
		
		Therefore, from \eqref{L3}, \eqref{L1} and \eqref{L2}, we obtain
		\begin{equation*}
			\begin{aligned}
				&\frac{1}{2} \int_{\Omega}a(x)|\Delta(PU_{\delta_{1},\xi_{1}}-PU_{\delta_{2},\xi_{2}})|^{2}\,dx\\
				&=a(\zeta^{0})\gamma_{1}+\varepsilon \nabla a(\zeta^{0})\cdot\nu(\zeta^{0})(t_{1}+t_{2})\frac{\gamma_{1}}{2}
				-\varepsilon a(\zeta^{0})\left( \left(\frac{d_{1}}{2t_{1}} \right) ^{N-4}+ \left( \frac{d_{2}}{2t_{2}}\right)^{N-4} \right) \frac{\gamma_{2}}{2}\\
				&- \varepsilon a(\zeta^{0})(d_{1}d_{2})^{\frac{N-4}{2}}\left(\frac{1}{|t_{1}-t_{2}|^{N-4}}-\frac{1}{|t_{1}+t_{2}|^{N-4}} \right) \gamma_{2}+o(\varepsilon)
			\end{aligned}
		\end{equation*}
		The term 	
		\begin{equation*}
			\frac{1}{p-\varepsilon} \int_{\Omega}a(x)\left|PU_{\delta_{1},\xi_{1}}-PU_{\delta_{2},\xi_{2}}\right|^{p-\varepsilon}\,dx,
		\end{equation*}
		is estimated in Lemma \ref{nonlinearij}. This completes the proof.
	\end{proof}

	\bigskip

	\begin{proof}{of Theorem \ref{thm2}}

		It is straightforward to verify that $F^{2}(\zeta^{0},\textbf{d}, \textbf{t})$  has a minimum point that remains stable under $C^{0}$-perturbations. Therefore, by Proposition \ref{exp2}, we conclude that if $\varepsilon$ is sufficiently small, the function $\mathcal{I}^{2}_\varepsilon(\zeta^{0},\textbf{d}, \textbf{t})$ possesses a critical point. The result then follows from Lemma \ref{reduc}.

	\end{proof}	

\bigskip
    
	\section{Boundary estimates of the Green function.}\label{sect:Green-function}
    This section focuses on the detailed analysis of the Green’s function and its regular part for the biharmonic operator under Navier boundary conditions. We derive precise estimates for these functions, particularly near the boundary of the domain. 

\medskip
    
	Let  $G(x, y)$ be  the  Green's function for the bi-Laplacian operator on $\Omega$ with Navier boundary condition and by $H(x,y)$ its regular part, i.e. $G(x,y)$ satisfies the problem:
	\begin{equation}
		\left\{
		\begin{array}
			[c]{ll}%
			\Delta_{y}^{2} G(x,y)=\gamma_{N}\delta_{x} & \text{in }\Omega,\\
			G(x,y)=\Delta_{y} G(x,y)=0&  \text{on }\partial\Omega
		\end{array}
		\right.  %
	\end{equation}
	where $\gamma_{N}:=(N-4)(N-2)meas(\mathbb{S}^{N-1})$ and $H(x,y)$  is defined by 
	\begin{equation*}
		H(x,y)=\frac{1}{|x-y|^{N-4}}-G(x,y)
	\end{equation*}
	and  verifies
	\begin{equation}\label{defH}
		\left\{
		\begin{array}
			[c]{ll}%
			\Delta_{y}^{2} H(x,y)=0 & \text{in }\Omega,\\
			H(x,y)=\frac{1}{|x-y|^{N-4}}&  \text{on }\partial\Omega\\
			\Delta_{y} H(x,y)=-2(N-4)\frac{1}{|x-y|^{N-2}}&  \text{on }\partial\Omega
		\end{array}
		\right.  %
	\end{equation}
	
	\bigskip

	We will begin by describing how $G(x,y)$ relates to the  $G_{\Delta}(x,y)$,   the Green's function for the Laplacian operator on  $\Omega$ with Dirichlet boundary condition.

	Let  $H_{\Delta}(x,y)$ be the regular part of $G_{\Delta}(x,y)$. Then
	\begin{equation*}
		H_{\Delta}(x,y)=\frac{1}{|x-y|^{N-2}}-G_{\Delta}(x,y).
	\end{equation*}
	From the equation \eqref{defH}, it is immediately evident that:
	$$H_{\Delta}(x,y):=\frac{-1}{2(N-4)}	\Delta_{y}H(x,y).$$  
	On the other hand, if  $u$ is a solution to the problem $\Delta^{2}u=f$ in $\Omega$ with $u=\Delta u=0$ in $\partial\Omega$ then:
	\begin{eqnarray*}
		\begin{aligned}
			u(x)&=\int_{\Omega}G_{\Delta}(x,z)(-\Delta u(z))\,dz\\
			&=\int_{\Omega}G_{\Delta}(x,z)\left(\int_{\Omega}G_{\Delta}(z,y) f(y)\,dy\right)\,dz\\
			&=\int_{\Omega}\left( \int_{\Omega}G_{\Delta}(x,z)G_{\Delta}(z,y)\, dz\right)  f(y)\,dy.\\
		\end{aligned}
	\end{eqnarray*}
	The preceding expression shows that
	\begin{equation*}
		G(x,y)= \int_{\Omega}G_{\Delta}(x,z)G_{\Delta}(z,y) \,dz.
	\end{equation*}
	Moreover, since $G_{\Delta}(x,y)$ is symmetric, i.e. $G_{\Delta}(x,y)=G_{\Delta}(y,x)$,it follows that
	\begin{equation*}
		G(x,y)= \int_{\Omega}G_{\Delta}(x,z)G_{\Delta}(z,y) \,dz= \int_{\Omega}G_{\Delta}(y,z)G_{\Delta}(z,x) \,dz=G(y,x).
	\end{equation*}
	Thus, $G(x,y)$ and $H(x,y)$ are symmetric functions. Furthermore, it holds true that:
	\begin{equation}\label{symH}
		\Delta_{y}H(x,y)=\Delta_{x}H(x,y)
	\end{equation}

	\bigskip
	Let $r>0$, and consider the set
	\begin{equation*}
		\Omega_{r}:=\{\xi\in\Omega: \,{\rm dist}(\xi,\partial\Omega)\leq r\}.
	\end{equation*}
	We will fix  $r$ small enough such that for every $\xi\in\Omega_{2r}$ there exists a unique $p(\xi)\in\partial \Omega$ with  ${\rm dist}(\xi,\partial\Omega)=|\xi-p(\xi)|$. We write $d_{\xi}:=dist(\xi,\partial\Omega)$,  and $\nu(\xi)$  will denote the inward normal to $\partial\Omega$ at $\xi$. For every $\xi\in\Omega_{2r}$ we define
	\begin{equation*}
		\widetilde{\xi}:=p(\xi)-d_{\xi}\nu(\xi)
	\end{equation*}
	the reflection of $\xi$ on $\partial\Omega$. 
	
	Note that, since the domain $\Omega$ is smooth, there exists a positive constant $C$ such that:
	\begin{equation}\label{font}
		\frac{|\widetilde{\xi}-y|}{|\xi-y|}\geq C \mbox{ for all }\xi\in \Omega_{r}, y\in \Omega
	\end{equation}
	
	\bigskip
	The next lemma give us an accurate estimate of $H(x,y)$ when the point $x$ is close to the boundary. 
	\begin{lemma}\label{LAH}
		There exists a positive constant $C>0$ such that
		\begin{equation}\label{esth}
			\begin{aligned}
				|H(x,y)-\frac{1}{|\widetilde{x}-y|^{N-4}}|&\leq \frac{C d_{x}}{|\widetilde{x}-y|^{N-4}},
			\end{aligned}
		\end{equation}
		\begin{equation}\label{estlaphy}
			\begin{aligned}
				|\Delta_{y}H(x,y)+\frac{2(N-4)}{|\widetilde{x}-y|^{N-2}}|&\leq \frac{C d_{x}}{|\widetilde{x}-y|^{N-2}},
			\end{aligned}
		\end{equation}
		and
		\begin{equation}\label{gradlap}
			|\nabla_{x}\left(\Delta_{y}H(x,y)+\frac{2(N-4)}{|\widetilde{x}-y|^{N-2}}\right) |\leq \frac{C}{|\widetilde{x}-y|^{N-2}}	
		\end{equation}
		for every $x\in \Omega_{r}$ and $y\in\Omega$.
	\end{lemma}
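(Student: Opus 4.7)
The key is to introduce the auxiliary function $\Phi(x,y) := H(x,y) - |\widetilde{x}-y|^{-(N-4)}$. For $x \in \Omega_r$ with $r$ small enough, the reflected point $\widetilde{x}$ lies outside $\overline{\Omega}$, and since $\Delta^2 |y|^{-(N-4)} = 0$ for $y \neq 0$ (as follows from iterating $\Delta |y|^{-(N-4)} = -2(N-4)|y|^{-(N-2)}$), the function $|\widetilde{x}-y|^{-(N-4)}$ is biharmonic in $\Omega$. Hence $\Phi$ is biharmonic there, with Navier boundary data
\[
\Phi(x,z) = \frac{1}{|x-z|^{N-4}} - \frac{1}{|\widetilde{x}-z|^{N-4}}, \qquad \Delta_y \Phi(x,z) = -2(N-4)\left[\frac{1}{|x-z|^{N-2}} - \frac{1}{|\widetilde{x}-z|^{N-2}}\right]
\]
for $z\in\partial\Omega$, so the proofs of \eqref{esth} and \eqref{estlaphy} reduce to controlling a biharmonic function with explicit, small boundary data.

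The first step is to estimate these boundary data using the elementary identities
\[
|x-z|^2 - |\widetilde{x}-z|^2 = 4 d_x\, \nu(p(x)) \cdot (p(x)-z), \qquad |x-z|^2 + |\widetilde{x}-z|^2 = 2\bigl(|p(x)-z|^2 + d_x^2\bigr),
\]
which come from $x = p(x) + d_x \nu(p(x))$ and $\widetilde{x} = p(x) - d_x \nu(p(x))$. Smoothness of $\partial\Omega$ forces $|\nu(p(x)) \cdot (p(x)-z)| \leq C|p(x)-z|^2$ for $z \in \partial\Omega$ near $p(x)$, and combined with the identities above this gives, after possibly shrinking $r$, the comparability $|x-z| \simeq |\widetilde{x}-z| \simeq d_x + |p(x)-z|$ and the difference bound $\bigl||x-z|-|\widetilde{x}-z|\bigr| \leq C d_x$, uniformly for $z \in \partial\Omega$ and $x \in \Omega_r$. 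Applying the mean-value theorem to $t \mapsto t^{-(N-4)}$ and $t \mapsto t^{-(N-2)}$ then yields the pointwise boundary bounds
\[
|\Phi(x,z)| \leq \frac{C d_x}{|\widetilde{x}-z|^{N-3}}, \qquad |\Delta_y \Phi(x,z)| \leq \frac{C d_x}{|\widetilde{x}-z|^{N-1}}, \qquad z \in \partial\Omega.
\]

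To pass to the interior, I would use the Navier decomposition $\Phi(y) = P[g_0](y) - \int_\Omega G_\Delta(y,z) w(z)\,dz$, with $g_0 = \Phi|_{\partial\Omega}$, $P[g_0]$ its harmonic extension, and $w$ the harmonic extension of $\Delta_y \Phi|_{\partial\Omega}$. Combining the pointwise bounds $P(y,z) \leq C d_y/|y-z|^N$, $|w(z)| \leq C d_x/|\widetilde{x}-z|^{N-2}$ (obtained by the same Poisson argument from the boundary bound on $\Delta_y \Phi$), and the Gr\"uter--Widman estimate $G_\Delta(y,z) \leq C d_y d_z/|y-z|^N$, both pieces are controlled once one shows
\[
\int_{\partial\Omega} \frac{d_y\, dS(z)}{|y-z|^N\,|\widetilde{x}-z|^{N-3}} \leq \frac{C}{|\widetilde{x}-y|^{N-4}}, \qquad \int_\Omega \frac{d_y d_z\, dz}{|y-z|^N\,|\widetilde{x}-z|^{N-2}} \leq \frac{C}{|\widetilde{x}-y|^{N-4}}.
\]
Proving these sharp integral estimates by splitting the integration domain into the regions near $p(x)$, near $p(y)$, and the remainder is the main technical obstacle of the argument. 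The estimate \eqref{estlaphy} can be derived by the same scheme applied to the harmonic function $H_\Delta(x,\cdot) - |\widetilde{x}-\cdot|^{-(N-2)}$, or equivalently by invoking the classical Bahri--Rey type estimate for the regular part of the Laplacian Green's function together with the identity $\Delta_y H = -2(N-4) H_\Delta$. Finally, \eqref{gradlap} follows by differentiating the representation $\Delta_y H(x,y) = -2(N-4)\int_{\partial\Omega} P(y,z)|x-z|^{-(N-2)}\,dS(z)$ with respect to $x$ --- noting that $\widetilde{x} = 2 p(x) - x$ depends smoothly on $x \in \Omega_r$ --- and applying the same boundary-to-interior argument to the resulting harmonic-in-$y$ function, whose boundary data is now of order $|\widetilde{x}-z|^{-(N-1)}$ with no $d_x$ factor, producing the interior bound $C/|\widetilde{x}-y|^{N-2}$.
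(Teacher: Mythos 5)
Your setup is on the right track: you introduce the correct auxiliary function $\Phi=\psi$, correctly observe that $|\widetilde{x}-y|^{-(N-4)}$ is biharmonic in $\Omega$ because $\widetilde{x}\notin\overline\Omega$, and your boundary-data estimates are fine (indeed slightly sharper than what is needed). However, your route for passing from the boundary to the interior is both different from the paper's and incomplete. The paper does \emph{not} write $\Phi$ through a Poisson kernel and a Gr\"uter--Widman bound on $G_\Delta$: it simply observes that $d_x|\widetilde{x}-y|^{-(N-2)}$ is harmonic in $\Omega$ and $d_x|\widetilde{x}-y|^{-(N-4)}$ is superharmonic and biharmonic there (again because $\widetilde{x}\notin\overline\Omega$), and these serve as barrier functions. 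Concretely, $\Delta_y\Phi$ is a harmonic function dominated on $\partial\Omega$ by the harmonic function $Cd_x|\widetilde{x}-y|^{-(N-2)}$, so \eqref{estlaphy} is an immediate application of the maximum principle; then $\Phi$ splits as the harmonic extension of its boundary data (controlled by the superharmonic barrier $Cd_x|\widetilde{x}-y|^{-(N-4)}$) plus a term with zero boundary data and Laplacian $\Delta_y\Phi$, for which the comparison $-\Delta[Cd_x|\widetilde{x}-y|^{-(N-4)}]=2(N-4)Cd_x|\widetilde{x}-y|^{-(N-2)}\geq |\Delta_y\Phi|$ gives \eqref{esth} directly. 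This sidesteps the two ``sharp integral estimates'' that you flag as the main technical obstacle and leave unproven; in other words, the hardest step of your argument is exactly the step the paper's comparison principle makes unnecessary, and as written your proof of \eqref{esth} is not complete.

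For \eqref{gradlap} there is a more substantive problem with your suggestion. Differentiating the Poisson representation of $\Delta_yH(x,\cdot)$ in $x$ produces boundary data of size $|\widetilde{x}-z|^{-(N-1)}$ with no $d_x$ factor, and you then assert this yields the interior bound $C|\widetilde{x}-y|^{-(N-2)}$. But $|\widetilde{x}-z|^{-(N-1)}$ is not dominated, uniformly in $x\in\Omega_r$, by $C|\widetilde{x}-z|^{-(N-2)}$ on $\partial\Omega$ (near $z=p(x)$ one has $|\widetilde{x}-z|\sim d_x$, which can be arbitrarily small), so a barrier comparison of the same kind fails, and you give no integral estimate to replace it. The paper handles this instead by an interior gradient estimate for the harmonic function $\Delta_y\psi$ on a ball of radius comparable to $d_x$ about $x$ (Gilbarg--Trudinger, Thm.\ 3.9), together with the bound $|\Delta_x\Delta_y\psi|\leq C/(d_x|\widetilde{x}-y|^{N-2})$: the factor $1/d_x$ coming from the radius of the ball exactly cancels the $d_x$ in \eqref{estlaphy}, producing the stated bound. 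Your scheme, by discarding that $d_x$ factor at the boundary, loses the cancellation that makes the estimate work.
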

	\bigskip
	
	\begin{proof}
		We introduce the notation:
		\begin{equation*}
			\psi(x,y)=H(x,y)-\frac{1}{|\widetilde{x}-y|^{N-4}} \ \ \ x\in \Omega_{r}, y\in\Omega
		\end{equation*}
		Note that for each fixed $x\in \Omega_{r}$, the function 
		$\psi(x,y)$ satisfies the following problem:
		\begin{equation}
			\left\{
			\begin{array}
				[c]{ll}%
				\Delta_{y}^{2} \psi(x,y)=0 & \text{in }\Omega,\\
				\psi(x,y)=\frac{1}{|x-y|^{N-4}}-\frac{1}{|\widetilde{x}-y|^{N-4}}&  \text{on }\partial\Omega\\
				\Delta_{y} \psi(x,y)=-2(N-4)\left( \frac{1}{|x-y|^{N-2}}+\frac{1}{|\widetilde{x}-y|^{N-2}}\right) &  \text{on }\partial\Omega
			\end{array}
			\right.  %
		\end{equation}
		Since, 
		\begin{equation*}
			\sup_{y\in\partial\Omega}|\psi(x,y)|= \sup_{y\in\partial\Omega} | \left( \frac{1}{|x-y|^{N-4}}-\frac{1}{|\widetilde{x}-y|^{N-4}} \right)| \leq \frac{C d_{x}}{|\widetilde{x}-y|^{N-4}},
		\end{equation*}
		It follows from the Maximum Principle that 
		\begin{equation*}
			|H(x,y)-\frac{1}{|\widetilde{x}-y|^{N-4}}|\leq \frac{C d_{x}}{|\widetilde{x}-y|^{N-4}},  \ \  x\in \Omega_{r},y\in\Omega
		\end{equation*}
		This completes the proof of inequality \eqref{esth}. 
		\bigskip
		
		Next, we prove equation \eqref{estlaphy}. For each fixed $x\in \Omega_{r}$, the function
		\begin{equation*}
			\Delta_{y}\psi(x,y)=\Delta_{y}H(x,y)+\frac{2(N-4)}{|\widetilde{x}-y|^{N-2}}
		\end{equation*}
		satisfies the problem:
		\begin{equation}
			\left\{
			\begin{array}
				[c]{ll}%
				\Delta_{y} (\Delta_{y}\psi(x,y))=0 & \text{in }\Omega,\\
				\Delta_{y} \psi(x,y)=-2(N-4)\left( \frac{1}{|x-y|^{N-2}}-\frac{1}{|\widetilde{x}-y|^{N-2}}\right) &  \text{on }\partial\Omega
			\end{array}
			\right.  %
		\end{equation}
		Applying the maximum principle, we obtain:
		\begin{equation*}
			|\Delta_{y}H(x,y)+\frac{2(n-4)}{|\widetilde{x}-y|^{N-2}}|\leq \frac{C d_{x}}{|\widetilde{x}-y|^{N-2}}
		\end{equation*}
		This completes the proof of equation \eqref{estlaphy}.
		
		\bigskip
		
		To verify the validity of inequality  \eqref{gradlap}, we first prove that:
		\begin{equation}\label{ec2}
			|\Delta_{x}\Delta_{y}\psi(x,y)|=	|\Delta_{x}\left(\Delta_{y}H(x,y)+\frac{2(N-4)}{|\widetilde{x}-y|^{N-2}}\right) |\leq \frac{C}{d_{x}|\widetilde{x}-y|^{N-2}}
		\end{equation}
		for every $x\in \Omega_{r}$ and $y\in\Omega$. This inequality follows from the fact that
		$$\Delta_{x}\Delta_{y}H(x,y)=\Delta_{x}H_{\Delta}(x,y)=0$$
		and that, a direct calculation, which can be found in \cite[Lemma 2.2]{bartsch2010n}, shows that:
		\begin{equation*}
			|\Delta_{x}\left(\frac{2(N-4)}{|\widetilde{x}-y|^{N-2}}\right) |\leq \frac{C}{d_{x}|\widetilde{x}-y|^{N-2}} \ \ \ x\in\Omega_{r}, y\in\Omega
		\end{equation*}

		\bigskip
		
		We now proceed with the proof of the equation \eqref{gradlap}. We will rely on  from \cite[Theorem 3.9]{gilbarg1977elliptic}.  Fix $x\in \Omega_{r}$, $y\in\Omega$, and $r:=2\sqrt{N}$.  Consider the set
		\[
		B:=\{\xi\in \mathbb{R}^{N} \mid |x-\xi|_\infty \leq d_{x}/r\}.
		\]
		Note that if $\xi \in B$ then 
		\begin{equation*}
			|\xi-x|<d_x/2
		\end{equation*}
		which implies
		\begin{equation}\label{ec3}
			d_x/2\leq d_\xi\leq 3d_x/2.
		\end{equation}
		Then, for $i=1,2,\dots,N$, using inequalities \eqref{estlaphy}, \eqref{ec2} and \eqref{ec3},  we have:
		\begin{equation*}
			\begin{aligned}
				|\partial_{x_i}\Delta_{y}\psi(x,y)| &\leq \frac{rN}{d_x}\sup_{\xi \in \partial B}|\Delta_{y}\psi(\xi,y)|+\frac{d_x}{2r}\sup_{\xi\in B}|\Delta_\xi \Delta_{y}\psi(\xi,y)|\\
				&\leq C\left( \sup_{\xi \in \partial B}\frac{d_\xi}{d_x}\frac{1}{|\widetilde{\xi}-y|^{N-2}}+\sup_{\xi\in B}\frac{d_x}{d_\xi}\frac{1}{|\widetilde{\xi}-y|^{N-2}}\right)\\
				&\leq C\sup_{\xi\in B}\frac{1}{|\widetilde{\xi}-y|^{N-2}}\\
				&\leq \frac{C}{|\widetilde{x}-y|^{N-2}}.
			\end{aligned}
		\end{equation*}
		This completes the proof of inequality \eqref{gradlap}.
	\end{proof}
	
	\bigskip

	One consequence of the previous lemma is that, given the symmetry described by \eqref{symH}: 
	\begin{equation*}
		\Delta_{y}H(x,y)=\Delta_{x}H(x,y)
	\end{equation*}
	the inequalities \eqref{estlaphy} and \eqref{gradlap} can be rewritten as:
	\begin{equation}\label{estlaphx}
		\begin{aligned}
			\left|\Delta_{x}H(x,y)+\frac{2(N-4)}{|\widetilde{x}-y|^{N-2}}\right|&\leq \frac{C d_{x}}{|\widetilde{x}-y|^{N-2}}
		\end{aligned}
	\end{equation}
	and
	\begin{equation}\label{gradlap2}
		\begin{aligned}
			\left|\nabla_{x}\left(\Delta_{x}H(x,y)+\frac{2(N-4)}{|\widetilde{x}-y|^{N-2}}\right) \right|&\leq \frac{C}{|\widetilde{x}-y|^{N-2}}	\end{aligned}
	\end{equation}
	for all $x\in \Omega_{r}$ and $y\in\Omega$. Another consequence of Lemma \ref{LAH} is the following:
	
	\vspace{5mm}
	
	\begin{corollary}\label{coro1}
		There exists a positive constant $C>0$ such that
		\begin{equation}\label{estlaphx1}
			\begin{aligned}
				\left|\Delta_{y}H(x,y)\right|=	\left|\Delta_{x}H(x,y)\right|&\leq \frac{C }{|x-y|^{N-2}}
			\end{aligned}
		\end{equation}
		and
		\begin{equation}\label{gradlap21}
			\begin{aligned}
				\left|\nabla_{x}\Delta_{x}H(x,y)\right|&\leq \frac{C}{|x-y|^{N-1}}	\end{aligned}
		\end{equation}
		for every $x,y\in\Omega$.
	\end{corollary}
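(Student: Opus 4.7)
The plan is to combine the near-boundary estimates of Lemma~\ref{LAH} with standard interior elliptic theory, splitting the analysis according to whether $x$ lies close to or far from $\partial\Omega$. By the symmetry \eqref{symH} of $H$ it suffices to establish the stated bounds for $\Delta_y H(x,y)$ and for $\nabla_x \Delta_x H(x,y)$.

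Near the boundary (case $x \in \Omega_r$): I would start from \eqref{estlaphy}, bound $d_x \leq \operatorname{diam}(\Omega)$, and conclude that $|\Delta_y H(x,y)| \leq C/|\widetilde{x}-y|^{N-2}$. Inequality \eqref{font} then replaces $\widetilde{x}$ by $x$, yielding the first claim. For the gradient bound, the key ingredient is \eqref{gradlap2} combined with a direct chain-rule computation showing $|\nabla_x |\widetilde{x}-y|^{-(N-2)}| \leq C/|\widetilde{x}-y|^{N-1}$; since $x \mapsto \widetilde{x} = p(x) - d_x\nu(x)$ is smooth on $\Omega_r$ with bounded derivatives, this follows by a straightforward differentiation. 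The weaker singularity $1/|\widetilde{x}-y|^{N-2}$ coming from \eqref{gradlap2} is then absorbed into $C/|\widetilde{x}-y|^{N-1}$ using the uniform upper bound $|\widetilde{x}-y| \leq \operatorname{diam}(\Omega)+2r$. A final application of \eqref{font} completes this case.

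Interior region (case $x \in \Omega \setminus \Omega_r$): Here $d_x \geq r$, and I would exploit the identity $\Delta_y H(x,\cdot) = -2(N-4)H_\Delta(x,\cdot)$ derived in the discussion preceding \eqref{symH}. The function $H_\Delta(x,\cdot)$ is harmonic in $\Omega$ with boundary data $1/|x-\cdot|^{N-2}$, so the maximum principle gives $|H_\Delta(x,y)| \leq \sup_{z \in \partial\Omega} |x-z|^{-(N-2)} \leq r^{-(N-2)}$. By symmetry of $G_\Delta$, the function $H_\Delta(\cdot,y)$ is likewise harmonic in $x$ on $\Omega$, so standard interior gradient estimates applied on the ball of radius $r/2$ centered at $x$ (whose points all satisfy $d_z \geq r/2$) yield $|\nabla_x H_\Delta(x,y)| \leq Cr^{-(N-1)}$. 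Since $|x-y| \leq \operatorname{diam}(\Omega)$, these uniform constants can be rewritten in the required form via the trivial inequality $Cr^{-(N-2)} \leq C(\operatorname{diam}\Omega)^{N-2}r^{-(N-2)}/|x-y|^{N-2}$, and analogously for the gradient.

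No single step is especially delicate; the main bookkeeping task is ensuring that all constants depend only on $\Omega$, $r$, and $N$, rather than on $(x,y)$. The essential technical input is Lemma~\ref{LAH} itself, together with the uniform comparison \eqref{font}; the interior case reduces to elementary maximum-principle and Harnack-type arguments applied to the Laplacian regular part $H_\Delta$.
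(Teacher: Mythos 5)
Your proof is correct and follows the same overall strategy as the paper's: split $\Omega$ into the collar $\Omega_r$ and the interior $\Omega\setminus\Omega_r$, and in the collar combine Lemma~\ref{LAH} with the comparison \eqref{font} before absorbing the milder singularity $|\widetilde{x}-y|^{-(N-2)}$ into $C|\widetilde{x}-y|^{-(N-1)}$ by boundedness of $\Omega$. Where you go beyond the paper is in the interior region: the paper simply asserts the uniform bounds $|\Delta_x H(x,y)|\leq C$ and $|\nabla_x\Delta_x H(x,y)|\leq C$ on $\Omega\setminus\Omega_r$ (equations \eqref{acoth}, \eqref{acothg}) as a consequence of \eqref{defH}, without further argument, whereas you justify them explicitly via the identity $\Delta_y H(x,\cdot)=-2(N-4)H_\Delta(x,\cdot)$, the maximum principle for the harmonic function $H_\Delta(x,\cdot)$ using $d_x\geq r$, and interior gradient estimates for the harmonic function $H_\Delta(\cdot,y)$ on $B_{r/2}(x)$. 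This fills in a gap the paper leaves to the reader, and the filling-in is sound. One small point worth verifying when writing it up: the bound $|\nabla_x|\widetilde{x}-y|^{-(N-2)}|\leq C|\widetilde{x}-y|^{-(N-1)}$ requires that the map $x\mapsto\widetilde{x}$ on $\Omega_r$ have uniformly bounded Jacobian, which follows from the smoothness and compactness of $\overline{\Omega_r}$, as you note; the paper instead appeals to the computation in \cite{bartsch2010n}, but your elementary chain-rule version reaches the same conclusion.
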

	\begin{proof}
		
		\bigskip
		
		Since $H(x,y)$ satisfies the problem \eqref{defH}, there exists a constant $C>0$ such that
		\begin{equation}\label{acoth}
			|\Delta_{x}H(x,y)|\leq C
		\end{equation} 
		and 
		\begin{equation}\label{acothg}
			|\nabla_{x}\Delta_{x}H(x,y)|\leq C
		\end{equation} 
		for every $x\in\Omega\setminus \Omega_{r}$ and every $y\in\Omega$.
		
		\bigskip
		
		We will now prove inequality \eqref{estlaphx1}. Note that, by inequality \eqref{font}, it follows that:
		\begin{equation}\label{uio1}
			\frac{2(N-4)}{|\widetilde{x}-y|^{N-2}}\leq \frac{C}{|x-y|^{N-2}} \mbox{ for all }x\in \Omega_{r}, y\in \Omega 
		\end{equation}for some positive constant $C$.
		Thus, the inequality \eqref{estlaphx1} follows from \eqref{acoth}, \eqref{estlaphx}, and \eqref{uio1}.

		\bigskip
		
		We will now focus on proving the inequality \eqref{acothg}. Direct computations, similar to those in the proof of  \cite{bartsch2010n}, show that
		
		\begin{equation*} \left| \nabla_{x} \left( \frac{2(N-4)}{|\widetilde{x} - y|^{N-2}} \right) \right| \leq \frac{C}{|x - y|^{N-1}}. 
		\end{equation*}
		Therefore, using \eqref{gradlap2} and  \eqref{uio1}, we obtain
		\begin{equation}
			\begin{aligned}
				|\nabla_{x}\left(\Delta_{x}H(x,y)\right) |&\leq \frac{C}{|\widetilde{x}-y|^{N-2}}+|\nabla_{x}\left(\frac{2(N-4)}{|\widetilde{x}-y|^{N-2}}\right)|\\
				&\leq \frac{C}{|x-y|^{N-2}}+\frac{C}{|x-y|^{N-1}}\\
				&\leq \frac{C}{|x-y|^{N-1}}	\end{aligned}
		\end{equation}
		for every $x\in \Omega_{r}$ and $y\in\Omega$. This completes the proof of the corollary.
	\end{proof}
	
	\bigskip
	\begin{remark}
		Given the Corollary \ref{coro1}, we have the following estimates for the Green's function $G_{\Delta}(x,y)$  for the Laplacian operator on  $\Omega$ with Dirichlet boundary condition:
		\begin{equation}\label{G1}
			\begin{aligned}
				|G_{\Delta}(x,y)|&=|\frac{1}{|x-y|^{N-2}}-H_{\Delta}(x,y)|\\
				&=|\frac{1}{|x-y|^{N-2}}-\Delta_{x}H(x,y)|\leq \frac{C }{|x-y|^{N-2}}
			\end{aligned}
		\end{equation}
		and 
		\begin{equation}\label{G2}
			\begin{aligned}
				|\nabla_{x}G_{\Delta}(x,y)|&=|\nabla_{x}(\frac{1}{|x-y|^{N-2}}-H_{\Delta}(x,y))|\\
				&=|\nabla_{x}\left(  \frac{1}{|x-y|^{N-2}}-\Delta_{x}H(x,y)\right)|\leq \frac{C }{|x-y|^{N-1}}
			\end{aligned}
		\end{equation}
		for every $x,y\in\Omega$.
	\end{remark}

	\bigskip\bigskip
	\section{Approximation of Solutions and Bubble Projections}\label{sect:Aprox-solution}
In this section, we establish the technical estimates required for analyzing the projection of bubble functions. These approximations allow us to quantify the discrepancy between the bubble solutions and their projections onto the functional space. This analysis provides us with the asymptotic behavior of the solutions.
    

    
\medskip

	We begin by exploring the relationship between the bubble  $U_{\delta,\xi}(x)$ and its projection $PU_{\delta,\xi}(x)$ when the point $\xi$ is close to the boundary $\partial\Omega$. 
	\begin{proposition}\label{Propaprox}
		Let $\delta\in (0,1]$ and $\xi\in \Omega_{r}$. Let $\widetilde{\xi}$ be the reflection of $\xi$ with respect to the boundary $\partial\Omega$. Then, there exits a constant $c>0$ such that:
		\begin{equation}\label{equa1}
			0\leq PU_{\delta,\xi}(x)\leq U_{\delta,\xi}(x) 
		\end{equation}	
		and 
		\begin{equation}\label{equa2}
			0\leq U_{\delta,\xi}(x)-PU_{\delta,\xi}(x)\leq \alpha_{N}\delta^{\frac{N-4}{2}}H(x,\xi)\leq c\frac{\delta^{\frac{N-4}{2}}}{|x-\widetilde{\xi}|^{N-4}}
		\end{equation}		
		for every $x\in\Omega$. Furthermore,
		\begin{equation*}
			R_{\delta,\xi}(x):= PU_{\delta,\xi}(x)-U_{\delta,\xi}(x)+\alpha_{N}\delta^{\frac{N-4}{2}}H(x,\xi)
		\end{equation*}
		where the remainder term $R_{\delta,\xi}$ satisfies the following estimate:
		\begin{equation}\label{res1}
			|R_{\delta,\xi}|_{\Omega,\infty}	=O\left( \frac{\delta^{\frac{N}{2}}}{dist(\xi,\partial\Omega)^{N-2}}\right).
		\end{equation}	
		
	\end{proposition}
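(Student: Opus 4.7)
The strategy throughout is the iterated maximum principle for the biharmonic operator with Navier boundary conditions: since $\Delta^{2}$ with this boundary condition factors as two successive Dirichlet Laplacians, every claim reduces to sign and monotonicity considerations for harmonic functions. A direct computation gives
\[
\Delta U_{\delta,\xi}(x)=-\alpha_{N}(N-4)\delta^{(N-4)/2}\,\frac{N\delta^{2}+2|x-\xi|^{2}}{(\delta^{2}+|x-\xi|^{2})^{N/2}}\le 0,
\]
a formula I will use repeatedly.

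For the sandwich \eqref{equa1}, I set $w:=-\Delta PU_{\delta,\xi}$, which satisfies $-\Delta w=U_{\delta,\xi}^{(N+4)/(N-4)}\ge 0$ in $\Omega$ with $w=0$ on $\partial\Omega$; thus $w\ge 0$, and then $-\Delta PU_{\delta,\xi}=w\ge 0$ with zero Dirichlet data yields $PU_{\delta,\xi}\ge 0$. For the upper bound, $\varphi:=U_{\delta,\xi}-PU_{\delta,\xi}$ is biharmonic in $\Omega$ with $\varphi=U_{\delta,\xi}\ge 0$ and $\Delta\varphi=\Delta U_{\delta,\xi}\le 0$ on $\partial\Omega$, so the same iteration gives $\varphi\ge 0$.

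For the comparison \eqref{equa2}, I introduce $g:=\alpha_{N}\delta^{(N-4)/2}H(\cdot,\xi)-(U_{\delta,\xi}-PU_{\delta,\xi})$, which is biharmonic in $\Omega$. On $\partial\Omega$ one has $g=\alpha_{N}\delta^{(N-4)/2}\bigl(|x-\xi|^{-(N-4)}-(\delta^{2}+|x-\xi|^{2})^{-(N-4)/2}\bigr)\ge 0$. Using the boundary value $\Delta_{x}H(x,\xi)=-2(N-4)/|x-\xi|^{N-2}$ (from \eqref{defH} and the symmetry $\Delta_{x}H=\Delta_{y}H$), the inequality $\Delta g\le 0$ on $\partial\Omega$ reduces, with $r=|x-\xi|$, to
\[
2(\delta^{2}+r^{2})^{N/2}\;\ge\;(N\delta^{2}+2r^{2})\,r^{N-2},
\]
which holds with equality at $\delta=0$ and whose left-side derivative $N(\delta^{2}+r^{2})^{(N-2)/2}$ in $\delta^{2}$ dominates the right-side derivative $Nr^{N-2}$; thus the inequality persists for all $\delta\ge 0$. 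The iterated maximum principle gives $g\ge 0$, and the pointwise bound $H(x,\xi)\le c|x-\widetilde\xi|^{-(N-4)}$ follows from \eqref{esth} applied to $H(\xi,x)$ with $\xi\in\Omega_{r}$, using the symmetry of $H$.

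For the remainder estimate \eqref{res1}, $\Delta^{2}R_{\delta,\xi}=0$ in $\Omega$, so $v:=-\Delta R_{\delta,\xi}$ is harmonic. On $\partial\Omega$ the traces are
\[
R_{\delta,\xi}=\alpha_{N}\delta^{(N-4)/2}\bigl(r^{-(N-4)}-(\delta^{2}+r^{2})^{-(N-4)/2}\bigr),\qquad \Delta R_{\delta,\xi}=-\Delta U_{\delta,\xi}-\frac{2\alpha_{N}(N-4)\delta^{(N-4)/2}}{r^{N-2}}.
\]
The crucial point, and the main technical obstacle, is that the very choice of coefficient $\alpha_{N}\delta^{(N-4)/2}$ in front of $H$ is designed to cancel the leading term in the expansion of $\Delta R_{\delta,\xi}$ in powers of $\delta^{2}/r^{2}$; the linear term vanishes identically, leaving an $O(\delta^{4}/r^{N+2})$ remainder, so $|\Delta R_{\delta,\xi}|=O(\delta^{(N+4)/2}/r^{N+2})$ on $\partial\Omega$. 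A first-order Taylor expansion similarly gives $|R_{\delta,\xi}|=O(\delta^{N/2}/r^{N-2})$ on $\partial\Omega$. Since $r=|x-\xi|\ge d_{\xi}$ for $x\in\partial\Omega$, we obtain the boundary sup norms $O(\delta^{N/2}/d_{\xi}^{N-2})$ and $O(\delta^{(N+4)/2}/d_{\xi}^{N+2})$. Harmonicity transports the second bound to all of $\Omega$, and the standard $L^{\infty}$ estimate for $-\Delta R_{\delta,\xi}=v$ with Dirichlet data $R_{\delta,\xi}|_{\partial\Omega}$ gives $\|R_{\delta,\xi}\|_{\infty}\le\|R_{\delta,\xi}\|_{L^{\infty}(\partial\Omega)}+C\|v\|_{\infty}$, in which the boundary term dominates in the regime $\delta\ll d_{\xi}$, delivering \eqref{res1}.
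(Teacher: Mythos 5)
Your treatment of \eqref{equa1} and \eqref{equa2} is correct and supplies the iterated-maximum-principle details that the paper leaves to the reader; in particular, the reduction of $\Delta g\le 0$ on $\partial\Omega$ to the pointwise inequality $2(\delta^{2}+r^{2})^{N/2}\ge (N\delta^{2}+2r^{2})r^{N-2}$, verified through equality at $\delta=0$ and comparison of the $\partial_{\delta^{2}}$-derivatives, is clean. You also go further than the paper in the proof of \eqref{res1}: you observe the cancellation that makes the boundary trace of $\Delta R_{\delta,\xi}$ of order $\delta^{(N+4)/2}/r^{N+2}$ rather than the naive $\delta^{(N-4)/2}/r^{N-2}$, whereas the paper writes this trace down and then, when invoking the maximum principle, uses only the trace of $R_{\delta,\xi}$ itself.

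Your final step, however, does not close. The bound $\|R_{\delta,\xi}\|_{\infty}\le\|R_{\delta,\xi}\|_{L^{\infty}(\partial\Omega)}+C\|v\|_{\infty}$ with $v=-\Delta R_{\delta,\xi}$ harmonic produces the two competing terms $O(\delta^{N/2}/d_{\xi}^{N-2})$ and $C\,O(\delta^{(N+4)/2}/d_{\xi}^{N+2})$, whose ratio is $\delta^{2}/d_{\xi}^{4}$; the first dominates only when $\delta\ll d_{\xi}^{2}$, not when $\delta\ll d_{\xi}$ as you assert. In the regime used throughout the paper, $\delta\sim\varepsilon^{(N-3)/(N-4)}$ and $d_{\xi}\sim\varepsilon$, so $\delta^{2}/d_{\xi}^{4}\sim\varepsilon^{(10-2N)/(N-4)}$ blows up for every $N\ge 6$, and your maximum-principle step yields only the weaker bound $O(\delta^{(N+4)/2}/d_{\xi}^{N+2})$, which is not \eqref{res1}. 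What rescues \eqref{res1} is not the raw $L^{\infty}$ maximum principle but the spatial localization of $\Delta R_{\delta,\xi}|_{\partial\Omega}$ near the foot point $p(\xi)$: writing the $\Delta R$-driven part of $R_{\delta,\xi}$ as a boundary integral against the kernel $\partial G(x,y)/\partial n_{y}\lesssim d_{x}/|x-y|^{N-2}$ and estimating $\int_{\partial\Omega}|y-\xi|^{-(N+2)}|x-y|^{-(N-2)}\,d\sigma(y)$ gives the sharper bound $O(\delta^{(N+4)/2}/d_{\xi}^{N})$, which is $O(\delta^{N/2}/d_{\xi}^{N-2})$ once $\delta\lesssim d_{\xi}$. (The paper's own written proof has the same lacuna: it lists both boundary traces and then bounds $|R_{\delta,\xi}|_{\Omega,\infty}$ by $\sup_{\partial\Omega}|R_{\delta,\xi}|$ alone, never addressing the $\Delta R_{\delta,\xi}$ contribution.)
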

	
	\begin{proof}
		
		\bigskip	
		The inequalities \eqref{equa1} and \eqref{equa2} are derived directly from the maximum principle.  We will focus on proving the inequality \eqref{res1}. 
		Note that the function  $R_{\delta,\xi}$  satisfies the following boundary value problem: 
		\begin{equation}
			\left\{
			\begin{array}
				[c]{ll}%
				\Delta_{x}^{2} R_{\delta,\xi}(x)=0 & \text{in }\Omega,\\
				\Delta_{x}R_{\delta,\xi}(x)=-	\Delta_{x}U_{\delta,\xi}(x)-\alpha_{N}2(N-4)\delta^{\frac{N-4}{2}}\frac{1}{|x-\xi|^{N-2}}&  \text{on }\partial\Omega\\
				R_{\delta,\xi}(x)=-U_{\delta,\xi}(x)+\alpha_{N}\delta^{\frac{N-4}{2}}\frac{1}{|x-\xi|^{N-4}}&  \text{on }\partial\Omega
			\end{array}
			\right.  %
		\end{equation}	
		Thus, by applying the maximum principle, we obtain:
		\begin{equation*}
			\begin{aligned}
				|R_{\delta,\xi}|_{\Omega,\infty}&\leq \alpha_{N}\delta^{\frac{N-4}{2}}\max_{x\in\partial\Omega}\left|\frac{1}{(\delta^{2}+|x-\xi|^{2})^{\frac{N-4}{2}}} -\frac{1}{|x-\xi|^{N-4}}\right| _{\Omega,\infty}\\
				&= O\left(\delta^{\frac{N-4}{2}} \max_{x\in\partial\Omega}\frac{\delta^{2}}{|x-\xi|^{N-2}}\right)= O\left( \frac{\delta^{\frac{N}{2}}}{dist(\xi,\partial\Omega)^{N-2}}\right) \\
			\end{aligned}
		\end{equation*}
This completes the desired estimate.

	\end{proof}
	
	This previous result provides a precise bound on the difference between the bubble  $U_{\delta,\xi}(x)$ and its projection $PU_{\delta,\xi}(x)$, which plays a crucial role in our analysis of the asymptotic behavior. We use this result to prove:
	
	\begin{remark}\label{Rmk2}
		Let $\xi_{i}, \xi_{j}\in \Omega$ and such that
		\begin{equation*}
			\xi_{i}:=\xi_{i}^{0}+ \tau_{i}\nu(\xi_{i}^{0}) \mbox{ and  } 	\xi_{j}:=\xi_{j}^{0}+ \tau_{j}\nu(\xi_{j}^{0})
		\end{equation*}
		where 
		\begin{equation*}
			\tau_{i}=\varepsilon t_{i}  \mbox{and  } \tau_{i}=\varepsilon t_{j} \mbox{ with }  t_{i}<t_{j}.
		\end{equation*}
		Set 
		\begin{equation*}
			\eta:=\min\{d(\xi_{1}, \partial\Omega), d(\xi_{2}, \partial\Omega), \frac{|\xi_{1}-\xi_{2}|}{2} \}=\min\{t_{1}\varepsilon,t_{2}\varepsilon,\frac{\varepsilon|t_{1}-t_{2}|}{2} \}
		\end{equation*}
		and, as before:
		\begin{equation*}
			R_{\delta_{j},\xi_{j}}(x) =	PU_{\delta_{j},\xi_{j}}(x)	-U_{\delta_{j},\xi_{j}}(x)+\alpha_{N}\delta_{j}^{\frac{N-4}{2}}H(x,\xi_{j})
		\end{equation*}
		Then: 
		\begin{equation}\label{EstRes}
			\begin{aligned}
				\int_{B_{\eta}(\xi)}a(x)U_{\delta_{i},\xi_{i}}^{\frac{N+4}{N-4}}(x)R_{\delta_{j},\xi_{j}}(x) \,dx=O\left( \frac{\delta_{j}^{\frac{N}{2}}\delta_{i}^{\frac{N-4}{2}}}{\varepsilon^{N-2}}\right) 
			\end{aligned}
		\end{equation}
		and 
		\begin{equation}\label{EstH}
			\begin{aligned}
				\int_{B_{\eta}(\xi_{i})}U_{\delta_{i},\xi_{i}}^{\frac{N+4}{N-4}}(x)\left( \alpha_{N}\delta_{j}^{\frac{N-4}{2}}H(x,\xi_{j})\right)\,dx &=	\int_{B_{\eta}(\xi_{i})}U_{\delta_{i},\xi_{i}}^{\frac{N+4}{N-4}}(x)\frac{\alpha_{N}\delta_{j}^{\frac{N-4}{2}}}{|x-\widetilde{\xi}_{j}|^{N-4}}\,dx+O\left( \varepsilon\frac{(\delta_{i}\delta_{j})^{\frac{N-4}{2}}}{\varepsilon^{N-4}} \right)
			\end{aligned}
		\end{equation}
	\end{remark}
	
	\bigskip
	\begin{proof}
		To prove the equation \eqref{EstRes}, we directly used estimation \eqref{res1} to obtain:
		\begin{equation*}
			\begin{aligned}
				\int_{B_{\eta}(\xi)}a(x)U_{\delta_{i},\xi_{i}}^{\frac{N+4}{N-4}}(x)R_{\delta_{j},\xi_{j}}(x) \,dx &\leq C\left(\frac{\delta_{j}^{\frac{N}{2}}}{\varepsilon^{N-2}} \right)\int_{B_{\eta}(\xi)}\frac{\delta_{i}^{\frac{N+4}{2}}}{\left(\delta_{i}^{2}+|x-\xi_{i}|^{2} \right)^{\frac{N+4}{2}} }\,dx\\
				&=C\left(\frac{\delta_{j}^{\frac{N}{2}}}{\varepsilon^{N-2}} \right)\int_{B_{\frac{\eta}{\delta_{i}}}(0)}\frac{\delta_{i}^{N+\frac{N+4}{2}}}{\left(\delta_{i}^{2}+|\delta_{i}y|^{2} \right)^{\frac{N+4}{2}} }\,dx\\
				&=O\left( \frac{\delta_{j}^{\frac{N}{2}}\delta_{i}^{\frac{N-4}{2}}}{\varepsilon^{N-2}}\right) 
			\end{aligned}
		\end{equation*}
		
		i.e. 
		\begin{equation*}
			\begin{aligned}
				\int_{B_{\eta}(\xi_{i})}a(x)U_{\delta_{i},\xi_{i}}^{\frac{N+4}{N-4}}(x)R_{\delta_{j},\xi_{j}}(x) \,dx=O\left( \frac{\delta_{j}^{\frac{N}{2}}\delta_{i}^{\frac{N-4}{2}}}{\varepsilon^{N-2}}\right)  
			\end{aligned}
		\end{equation*}
		
		Now we will prove \eqref{EstH}. Using the estimation given in \eqref{esth}, we obtain:
		\begin{equation*}
			\begin{aligned}
				&\int_{B_{\eta}(\xi_{i})}U_{\delta_{i},\xi_{i}}^{\frac{N+4}{N-4}}(x)\left(\alpha_{N}\,\delta_{j}^{\frac{N-4}{2}}H(x,\widetilde{\xi}_{j})-\frac{\alpha_{N}\delta_{j}^{\frac{N-4}{2}}}{\left|x-\widetilde{\xi}_{j}\right|^{N-4}}\right)\,dx \\
				&= O\left(\delta_{j}^{\frac{N-4}{2}} \int_{B_{\eta}(\xi_{i})}U_{\delta_{i},\xi_{i}}^{\frac{N+4}{N-4}}(x)\frac{\tau_{j}}{\left|x-\widetilde{\xi}_{j}\right|^{N-4}}\,dx\right) \\
				&=O\left( \varepsilon\,\delta_{j}^{\frac{N-4}{2}} \int_{B_{\frac{\eta}{\delta_{i}}}(0)}\frac{\delta_{i}^{N-\frac{N+4}{2}}}{\left(1+|y|^{2}\right)^{\frac{N+4}{2}} } \frac{1}{\left|\delta_{i}y+\xi_{i}-\widetilde{\xi}_{j}\right|^{N-4}}\,dy\right) \\
		&=O\left(\varepsilon\,\delta_{j}^{\frac{N-4}{2}} \dfrac{\delta_{i}^{\frac{N-4}{2}}}{\varepsilon^{N-4}}\int_{B_{\frac{\eta}{\delta_{i}}}(0)}\dfrac{1}{\left(1+|y|^{2}\right)^{\frac{N+4}{2}} } \frac{1}{\left|\frac{\delta_{i}}{\varepsilon}y+\frac{\xi_{i}-\widetilde{\xi}_{j}}{\varepsilon}\right|^{N-4}}\,dy\right) \\
				&=O\left( \varepsilon\,\dfrac{(\delta_{i}\,\delta_{j})^{\frac{N-4}{2}}}{\varepsilon^{N-4}}\int_{B_{\dfrac{\eta}{\delta_{i}}}(0)}\dfrac{1}{\left(1+|y|^{2}\right)^{\frac{N+4}{2}} } \,dy\right)=O\left( \varepsilon\,\dfrac{\left(\delta_{i}\,\delta_{j}\right)^{\frac{N-4}{2}}}{\varepsilon^{N-4}} \right)=o(\varepsilon),\\
			\end{aligned}
		\end{equation*}
		because, since $y\in B_{\frac{\eta}{\delta_{i}}}(0)$, then $\frac{\delta_{i}}{\varepsilon}|y|<\frac{\eta}{\varepsilon}$ and therefore:
		\begin{equation*}
			\left|\frac{\delta_{i}}{\varepsilon}y+\frac{\xi_{i}-\widetilde{\xi}_{j}}{\varepsilon}\right|\geq 	\left|\frac{\xi_{i}-\widetilde{\xi}_{j}}{\varepsilon}\right|-\left|\frac{\delta_{i}}{\varepsilon}y\right|= (t_{1}+t_{2})-\min\left\{t_{1}, t_{2},\frac{|t_{1}-t_{2}|}{2} \right\}\geq \min\left\{t_{1}, t_{2},\frac{|t_{1}-t_{2}|}{2} \right\}.
		\end{equation*}
	\end{proof}

	\bigskip
	
	In the rest of this work, we will use Remark \ref{Rmk2} to prove various estimates. We begin by proving the following asymptotic expansions:

	\begin{proposition}\label{tmi} Let $\xi_{i}\in\Omega$ and $\eta$ defined as in Remark \ref{Rmk2}, then:
		\begin{equation*}\label{tmi1}
			\int_{B_{\eta}(\xi_{i})}a(x)U_{\delta_{i},\xi_{i}}^{\frac{2N}{N-4}} (x)\,dx=a(\xi_{i}^{0})\gamma_{1}+\varepsilon t_{i}\nabla a(\xi_{i}^{0})\cdot\nu(\xi_{i}^{0})\gamma_{1}+o(\varepsilon^{1+\sigma}),
		\end{equation*}
		and
		\begin{equation*}\label{tmi2}
			\begin{aligned}
				\int_{B_{\eta}(\xi_{i})}a(x)\,U_{\delta_{i},\xi_{i}}^{\frac{N+4}{N-4}}(x)\left(PU_{\delta_{i},\xi_{i}}(x)-U_{\delta_{i},\xi_{i}}(x) \right)\,dx =-\varepsilon\left( \frac{d_{i}}{2t_{i}}\right) ^{N-4}a(\xi_{i}^{0})\,\gamma_{2}+o\left(\varepsilon^{1+\sigma}\right).
			\end{aligned}
		\end{equation*}
		Here:
		\begin{equation*}
			\gamma_{1}=	\alpha_{N}^{p}\int_{\mathbb{R}^{N}}\frac{1}{\left( 1+|y|^{2}\right)^{N} }\, dy \ \ \quad\mbox{ and }\quad	\ \ \gamma_{2}:=a_{N}^{p}\int_{\mathbb{R}^{N}}\frac{1}{\left(1+|y|^{2}\right)^{\frac{N+4}{2}} }\,dy.
		\end{equation*}
	\end{proposition}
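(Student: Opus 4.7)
The plan is to handle the two identities by the standard bubble rescaling $y=(x-\xi_i)/\delta_i$, which turns $U_{\delta_i,\xi_i}$ into the unit bubble, and to read off the leading behaviour from Taylor expansion of $a$ around $\xi_i^0$. I will exploit the scaling relation $\delta_i=d_i\varepsilon^{(N-3)/(N-4)}$, which in particular gives $\delta_i/\varepsilon\to 0$ and $\eta/\delta_i\to\infty$, so that in rescaled coordinates the integration domain exhausts $\mathbb{R}^N$ while $\xi_i-\widetilde{\xi}_i=2d_{\xi_i}\nu(\xi_i^0)=2\varepsilon t_i\nu(\xi_i^0)$ is large compared to $\delta_i$.

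For the first identity, the change of variables converts the integrand into $\alpha_N^{p}\,a(\xi_i+\delta_i y)(1+|y|^2)^{-N}$ on $B_{\eta/\delta_i}(0)$. The tail outside this ball has size $O((\eta/\delta_i)^{-N})=O(\varepsilon^{N/(N-4)})=o(\varepsilon^{1+\sigma})$. Taylor expanding $a(\xi_i+\delta_i y)=a(\xi_i)+\delta_i\nabla a(\xi_i)\cdot y+O(\delta_i^{2}|y|^{2})$, the linear term integrates to zero by antisymmetry against the radial weight, and the quadratic remainder contributes $O(\delta_i^{2})=O(\varepsilon^{2(N-3)/(N-4)})=o(\varepsilon^{1+\sigma})$. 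This leaves $a(\xi_i)\gamma_{1}$, into which I substitute $a(\xi_i)=a(\xi_i^{0})+\varepsilon t_i\nabla a(\xi_i^{0})\cdot\nu(\xi_i^{0})+O(\varepsilon^{2})$ to get the advertised expansion.

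For the second identity, I use Proposition \ref{Propaprox} to write
\[
PU_{\delta_i,\xi_i}-U_{\delta_i,\xi_i}=-\alpha_N\delta_i^{(N-4)/2}H(\cdot,\xi_i)+R_{\delta_i,\xi_i},
\]
so the integral splits in two. Applying Remark \ref{Rmk2} with $i=j$, the $R_{\delta_i,\xi_i}$-contribution is of order $\delta_i^{N-2}/\varepsilon^{N-2}=O(\varepsilon^{(N-2)/(N-4)})=o(\varepsilon^{1+\sigma})$. For the $H$-term, estimate \eqref{EstH} reduces the computation to
\[
-\alpha_N^{p+1}\,\delta_i^{(N-4)/2}\int_{B_\eta(\xi_i)} a(x)\,U_{\delta_i,\xi_i}^{p-1}(x)\,|x-\widetilde{\xi}_i|^{-(N-4)}\,dx + o(\varepsilon^{1+\sigma}).
\]
Rescaling by $y=(x-\xi_i)/\delta_i$, using $x-\widetilde{\xi}_i=2d_{\xi_i}\nu(\xi_i^{0})+\delta_i y$ and factoring out the dominant piece $(2\varepsilon t_i)^{-(N-4)}$ from $|x-\widetilde{\xi}_i|^{-(N-4)}$ yields a prefactor $\delta_i^{N-4}/(2\varepsilon t_i)^{N-4}=(d_i/(2t_i))^{N-4}\,\varepsilon$ together with the rescaled integral $\alpha_N^{p}\int_{\mathbb{R}^N}a(\xi_i+\delta_i y)(1+|y|^2)^{-(N+4)/2}\,dy$, which tends to $a(\xi_i^{0})\gamma_{2}$.

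The main technical point is the uniform control of $|x-\widetilde{\xi}_i|^{-(N-4)}$ in the rescaled variable, since $|y|$ can reach $\eta/\delta_i$ which far exceeds $\varepsilon/\delta_i$. The argument here is that the decay $(1+|y|^{2})^{-(N+4)/2}$ localises the mass to $|y|\lesssim 1$, where $\delta_i|y|/\varepsilon\to 0$ and the expansion
\[
|2\varepsilon t_i\nu(\xi_i^{0})+\delta_i y|^{-(N-4)}=(2\varepsilon t_i)^{-(N-4)}\bigl(1+O(\delta_i|y|/\varepsilon)\bigr)
\]
holds; the lower bound extracted at the end of the proof of Remark \ref{Rmk2} gives the same control on the remaining annulus $|y|\gg 1$, so the correction is absorbed into $o(\varepsilon^{1+\sigma})$, yielding the stated expansion.
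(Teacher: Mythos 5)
Your proposal is correct and takes essentially the same route as the paper: rescale by $y=(x-\xi_i)/\delta_i$, Taylor-expand $a$, use the decomposition from Proposition \ref{Propaprox} together with the estimates \eqref{EstRes} and \eqref{EstH} from Remark \ref{Rmk2}, and then expand $|x-\widetilde{\xi}_i|^{-(N-4)}$ around the dominant term $(2\tau_i)^{-(N-4)}$. The only cosmetic difference is that you first Taylor-expand $a$ at $\xi_i$ and then translate to $\xi_i^0$ in a second step, whereas the paper expands directly at $\xi_i^0$ (picking up the cross term $\delta_i\tau_i|y|$ in the remainder); both yield the same error orders, and your explicit treatment of the tail where $\delta_i|y|/\varepsilon$ is no longer small is a point the paper leaves implicit but is handled correctly by the same integrability argument.
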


\bigskip

	\begin{proof}
		
		We write: 
		\begin{eqnarray*}
			\begin{aligned}
				\int_{B_{\eta}(\xi_{i})}a(x)\,U_{\delta_{i},\xi_{i}}^{\frac{2N}{N-4}}(x)\,dx &=	\int_{B_{\eta}(\xi_{i})}(a(x)-a(\xi_{i}^{0}))U_{\delta_{i},\xi_{i}}^{\frac{2N}{N-4}}(x)\,dx  +	\int_{B_{\eta}(\xi_{i})}a(\xi_{i}^{0})U_{\delta_{i},\xi_{i}}^{\frac{2N}{N-4}}(x)\,dx . 
			\end{aligned}
		\end{eqnarray*}
		On one hand, we obtain:	
		\begin{equation}
			\begin{aligned}
				\int_{B_{\eta}(\xi_{i})}a(\xi_{i}^{0})U_{\delta_{i},\xi_{i}}^{\frac{2N}{N-4}}(x)\,dx &=\alpha_{N}^{p}a(\xi_{i}^{0})\int_{B_{\eta}(\xi_{i})}\frac{\delta_{i}^{N}}{\left( \delta_{i}^{2}+|x-\xi_{i}|^{2}\right)^{N} }\,dx\\
				&=\alpha_{N}^{p}a(\xi_{i}^{0})\int_{B_{\frac{\eta}{\delta_{i}}}(0)}\frac{1}{\left( 1+|y|^{2}\right)^{N} }\,dy\\
				&=a(\xi_{i}^{0})\left(\gamma_{1}+O\left( \left( \frac{\delta_{i}}{\varepsilon} \right)^{N} \right)\right)=a(\xi_{i}^{0})\gamma_{1}+o(\varepsilon).
			\end{aligned}
		\end{equation}
		On the other hand, since 
        
        \smallskip

		\begin{equation*}
a(\delta_{i}y+\xi_{i}^{0}+\tau_{i}\nu(\xi_{i}^{0}))=a(\xi_{i}^{0})+\nabla a(\xi_{i}^{0})\cdot(\tau_{i}\nu(\xi_{i}^{0})+\delta_{i} y)+R(y),
		\end{equation*}

        \medskip

	\noindent	where 	$|R(y)|=O\left( |\delta_{i}^{2}|y|^{2}+\delta_{i}\tau_{i}|y|+\tau_{i}^{2}|\right)$, then we have
		\begin{eqnarray*}
			\begin{aligned}
				\int_{B_{\eta}(\xi_{i})}(a(x)-a(\xi_{i}^{0}))U_{\delta_{i},\xi_{i}}^{\frac{2N}{N-4}}(x)\,dx&=\displaystyle\int_{B_{\eta}(\xi_{i})}\left(a(x)-a(\xi_{i}^{0})\right)U_{\delta_{i},\xi_{i}}^{\frac{2N}{N-4}}(x)\,dx\\
				&=\alpha_{N}^{p}\displaystyle\int_{B_{\frac{\eta}{\delta_{i}}}(0)}\frac{\left(a(\delta_{i}y+\xi_{i}^{0}+\tau_{i}\nu(\xi_{i}^{0}))-a(\xi_{i}^{0})\right)}{\left( 1+|y|^{2}\right)^{N} }\,dy\\
				&=\alpha_{N}^{p}\displaystyle\int_{B_{\frac{\eta}{\delta_{i}}}(0)}\frac{\nabla a(\xi_{i}^{0})\cdot(\tau_{i}\nu(\xi_{i}^{0})+\delta_{i} y)+R(y)}{\left( 1+|y|^{2}\right)^{N} }\,dy\\
				&=\varepsilon t_{i}\nabla a(\xi_{i}^{0})\cdot\nu(\xi_{i}^{0})\gamma_{1}+o(\varepsilon).
			\end{aligned}
		\end{eqnarray*}

\medskip
        
		Therefore:
		\begin{equation*}
			\int_{B_{\eta}(\xi_{i})}a(x)U_{\delta_{i},\xi_{i}}^{\frac{2N}{N-4}}(x)\,dx=a(\xi_{i}^{0})\gamma_{1}+\tau_{i}\nabla a(\xi_{i}^{0})\cdot\nu(\xi_{i}^{0})\gamma_{1}+o(\varepsilon).
		\end{equation*}
		
		\medskip
		
		Now, we continue with the expansion of the term 
		\begin{equation*}
			\int_{B_{\eta}(\xi_{i})}a(x)U_{\delta_{i},\xi_{i}}^{\frac{N+4}{N-4}}\left(PU_{\delta_{i},\xi_{i}}(x)-U_{\delta,\xi_{i}}(x) \right)\,dx
		\end{equation*}
		By equations  \eqref{EstRes} and \eqref{EstH}, we obtain: 
		\begin{equation*}
			\begin{aligned}
				&\int_{B_{\eta}(\xi_{i})}a(x)\,U_{\delta_{i},\xi_{i}}^{\frac{N+4}{N-4}}(x)\left(PU_{\delta_{i},\xi_{i}}(x)-U_{\delta_{i},\xi_{i}}(x) \right)\,dx \\
				=&	\int_{B_{\eta}(\xi_{i})}a(x)\,U_{\delta_{i},\xi_{i}}^{\frac{N+4}{N-4}}(x)\left( -\alpha_{N}\,\delta_{i}^{\frac{N-4}{2}}H(x,\xi_{i})+R_{\delta_{i},\xi_{i}}(x)\right)\,dx\\
				=&-\int_{B_{\eta}(\xi_{i})}a(x)\,U_{\delta_{i},\xi_{i}}^{\frac{N+4}{N-4}}(x)\frac{\alpha_{N}\,\delta_{i}^{\frac{N-4}{2}}}{|x-\widetilde{\xi}_{i}|^{N-4}}\,dx+o(\varepsilon)\\
				=&-\alpha_{N}^{p}\,\delta_{i}^{N-4}\int_{B_{\frac{\eta}{\delta_{i}}}(0)}a\left(\delta y+\xi_{i}\right)\frac{1}{\left(1+|y|^{2}\right)^{\frac{N+4}{2}} }  \left(\frac{1}{|\delta_{i}y+\xi_{i}-\widetilde{\xi_{i}}|^{N-4}}\right)\,dy+o(\varepsilon)\\
				=&-\alpha_{N}^{p}\, \delta_{i}^{N-4}\, a(\xi_{i}^{0})\left(\int_{B_{\frac{\eta}{\delta_{i}}}(0)}\frac{1}{\left(1+|y|^{2}\right)^{\frac{N+4}{2}} }  \left(\frac{1}{\left|\delta_{i}y+\xi_{i}^{0}+\tau_{i}\nu (\xi_{i}^{0})-(\xi_{i}^{0}-\tau_{i}\nu (\xi_{i}^{0}))\right|^{N-4}}\right)\,dy\right)\left(1+O(\varepsilon)\right)\\
				=&-\alpha_{N}^{p}\,\delta_{i}^{N-4}\,a(\xi_{i}^{0})\left(\int_{B_{\frac{\eta}{\delta_{i}}}(0)}\frac{1}{\left(1+|y|^{2}\right)^{\frac{N+4}{2}} }  \left(\frac{1}{\left|\delta_{i}y+2\tau_{i}\nu (\xi_{i}^{0})\right|^{N-4}}\right)\,dy\right)(1+O(\varepsilon))\\
				=&-\left( \frac{\delta_{i}}{2\tau_{i}}\right) ^{N-4}a(\xi_{i}^{0})\,\gamma_{2}\,(1+O(\varepsilon))=-\varepsilon\left( \frac{d_{i}}{2t_{i}}\right) ^{N-4}a(\xi_{i}^{0})\,\gamma_{2}+o(\varepsilon).
			\end{aligned}
		\end{equation*}	
		The last statement is true because:
		\begin{equation*}
			\left|\delta_{i}y+2\tau_{i}\eta (\xi_{i}^{0})\right|^{-(N-4)}=|2\tau_{i}\eta (\xi_{i}^{0})|^{-(N-4)}+O\left(\frac{\tau_{i}\delta_{i}|y|}{\tau_{i}^{N}}\right).
		\end{equation*}
		This concludes the proof of the lemma \ref{tmi}.
	\end{proof}

	\bigskip
	
	\begin{proposition}\label{propij}
		Let $\xi_{i},\xi_{j}\in\Omega$ and $\eta$ defined as in Remark \ref{Rmk2}.

		If $\xi_{i}^{0}\neq \xi_{j}^{0}$, then
		\begin{equation}\label{inj}
			\int_{B_{\eta}(\xi_{i})}a(x)\Delta^{2}(PU_{\delta_{i},\xi_{i}})PU_{\delta_{j},\xi_{j}}\,dx=o(\varepsilon)\ \ \
		\end{equation}
		If $\xi_{i}^{0}= \xi_{j}^{0}$, then
		\begin{equation}\label{isj}
			\int_{B_{\eta}(\xi_{i})}a(x)\Delta^{2}(PU_{\delta_{i},\xi_{i}})PU_{\delta_{j},\xi_{j}}\,dx= a(\xi_{i}^{0})\varepsilon(d_{i}d_{j})^{\frac{N-4}{2}}\left(\frac{1}{|t_{i}-t_{j}|^{N-4}}-\frac{1}{|t_{i}+t_{j}|^{N-4}} \right) \gamma_{2}+o(\varepsilon).
		\end{equation}
		
	\end{proposition}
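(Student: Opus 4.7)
The plan is to first rewrite $\Delta^2(PU_{\delta_i,\xi_i}) = U_{\delta_i,\xi_i}^{(N+4)/(N-4)}$, since $PU_{\delta_i,\xi_i}$ and $U_{\delta_i,\xi_i}$ differ by a biharmonic function and $U_{\delta_i,\xi_i}$ solves the limit equation. Then I would decompose $PU_{\delta_j,\xi_j}$ via Proposition \ref{Propaprox} as
\[
PU_{\delta_j,\xi_j}(x) = U_{\delta_j,\xi_j}(x) - \alpha_N\delta_j^{(N-4)/2}H(x,\xi_j) + R_{\delta_j,\xi_j}(x),
\]
so that the integral splits into three pieces. The piece involving $R_{\delta_j,\xi_j}$ is controlled immediately by estimate \eqref{EstRes} of Remark \ref{Rmk2}, yielding an error of order $\delta_j^{N/2}\delta_i^{(N-4)/2}/\varepsilon^{N-2} = O(\varepsilon^{(N-2)/(N-4)}) = o(\varepsilon)$, since $(N-2)/(N-4) > 1$ for $N \geq 5$.

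For the case $\xi_i^0 \neq \xi_j^0$, on $B_\eta(\xi_i)$ the point $x$ stays at a positive distance (uniform in $\varepsilon$) from both $\xi_j$ and its reflection $\tilde\xi_j$, so $U_{\delta_j,\xi_j}(x)$ and $\alpha_N\delta_j^{(N-4)/2}H(x,\xi_j)$ are pointwise $O(\delta_j^{(N-4)/2})$ there (using Lemma \ref{LAH} for the $H$ term). Together with the standard scaling estimate $\int U_{\delta_i,\xi_i}^{(N+4)/(N-4)}\,dx = O(\delta_i^{(N-4)/2})$, this gives a total contribution of order $(\delta_i\delta_j)^{(N-4)/2} = O(\varepsilon^{N-3}) = o(\varepsilon)$, proving \eqref{inj}.

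For the case $\xi_i^0 = \xi_j^0$, I would apply estimate \eqref{EstH} of Remark \ref{Rmk2} to replace $\alpha_N\delta_j^{(N-4)/2}H(x,\xi_j)$ by $\alpha_N\delta_j^{(N-4)/2}|x-\tilde\xi_j|^{-(N-4)}$ up to an $o(\varepsilon)$ error. After the change of variables $y = (x-\xi_i)/\delta_i$, and using that $\xi_i - \xi_j = \varepsilon(t_i-t_j)\nu(\xi_i^0)$, $\xi_i - \tilde\xi_j = \varepsilon(t_i+t_j)\nu(\xi_i^0)$, together with $\delta_i/\varepsilon \to 0$ (because $(N-3)/(N-4)>1$), for bounded $y$ one verifies
\[
\frac{1}{(\delta_j^2+|x-\xi_j|^2)^{(N-4)/2}} - \frac{1}{|x-\tilde\xi_j|^{N-4}} = \frac{1}{\varepsilon^{N-4}}\left[\frac{1}{|t_i-t_j|^{N-4}} - \frac{1}{(t_i+t_j)^{N-4}}\right] (1+o(1)).
\]
Meanwhile, the Jacobian and bubble prefactor combine to $\alpha_N^p\delta_i^{(N-4)/2}\delta_j^{(N-4)/2} = \alpha_N^p(d_id_j)^{(N-4)/2}\varepsilon^{N-3}$, the coefficient $a(\delta_i y + \xi_i)$ tends to $a(\xi_i^0)$, and the remaining $y$-integral converges to $\int_{\mathbb{R}^N}(1+|y|^2)^{-(N+4)/2}\,dy$, so that $\alpha_N^p$ times this integral yields $\gamma_2$. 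Collecting factors gives exactly \eqref{isj}. The main obstacle will be the error bookkeeping in Case 2: one must verify that the Taylor remainders coming from $a(\delta_i y + \xi_i)-a(\xi_i^0)$, from replacing the denominators by their leading constants in $\varepsilon$, and from truncating the $y$-integral at $|y|\leq \eta/\delta_i$, all contribute $o(\varepsilon)$. This proceeds along the same lines as the analogous self-interaction computation already carried out in the proof of Proposition \ref{tmi}.
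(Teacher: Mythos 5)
Your proposal is correct and follows essentially the same architecture as the paper's proof: the identity $\Delta^{2}(PU_{\delta_i,\xi_i})=U_{\delta_i,\xi_i}^{(N+4)/(N-4)}$, the three-term decomposition of $PU_{\delta_j,\xi_j}$ from Proposition \ref{Propaprox}, the estimates \eqref{EstRes} and \eqref{EstH} from Remark \ref{Rmk2}, and the blow-up change of variables $y=(x-\xi_i)/\delta_i$ to produce $\gamma_2$.

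The one place you genuinely diverge is the off-diagonal case $\xi_i^0\ne\xi_j^0$. The paper carries through the same change-of-variables computation used in the diagonal case and then exploits the cancellation between $1/\lvert(\xi_i-\xi_j)/\varepsilon\rvert^{N-4}$ and $1/\lvert(\xi_i-\widetilde{\xi_j})/\varepsilon\rvert^{N-4}$, obtaining the sharper bound $O(\varepsilon^{N-2})$. You instead bound $U_{\delta_j,\xi_j}$ and $\alpha_N\delta_j^{(N-4)/2}H(\cdot,\xi_j)$ pointwise by $O(\delta_j^{(N-4)/2})$ on $B_\eta(\xi_i)$ (using that $|x-\xi_j|$ and $|x-\widetilde{\xi_j}|$ stay bounded away from zero there together with Lemma \ref{LAH}), then pair with $\int U_{\delta_i,\xi_i}^{(N+4)/(N-4)}=O(\delta_i^{(N-4)/2})$, giving $O((\delta_i\delta_j)^{(N-4)/2})=O(\varepsilon^{N-3})$. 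This is cruder but more elementary, avoids the Taylor expansion of $|\cdot|^{-(N-4)}$ around $\xi_i^0-\xi_j^0$, and is still $o(\varepsilon)$ because $N-3\ge 2$ for $N\ge 5$. Both routes are valid here; the paper's cancellation argument would only become necessary if one needed a bound better than $\varepsilon^{N-3}$, which the statement does not require.
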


	\bigskip
	
	\begin{proof}
		\medskip
		Let us first observe that
		\begin{equation*}
			\begin{aligned}
				&\alpha_{N}^{p}\int_{B_{\eta}(\xi)}a(x)\frac{\delta_{i}^{\frac{N+4}{2}}}{(\delta_{i}^{2}+|x-\xi_{i}|^{2})^{\frac{N+4}{2}}}  \left(\frac{\delta_{j}^{\frac{N-4}{2}}}{(\delta_{j}^{2}+|x-\xi_{j}|^{2})^{\frac{N-4}{2}}} \right)\,dx\\
				&=\alpha_{N}^{p}\int_{B_{\eta}(\xi)}a(x)\frac{\delta_{i}^{\frac{N+4}{2}}}{(\delta_{i}^{2}+|x-\xi_{i}|^{2})^{\frac{N+4}{2}}}  \left(\frac{\delta_{j}^{\frac{N-4}{2}}}{|x-\xi_{j}|^{N-4}} \right)\,dx\\
			\end{aligned}
		\end{equation*}
		This equation holds because
		\begin{equation*}
			\left(\delta_{j}^{2}+|x-\xi_{j}|^{2}\right)^{-\frac{N-4}{2}}=	|x-\xi_{j}|^{-(N-4)}-\frac{N-4}{2}\delta_{j}^{2}	\left(\theta_{x}\delta_{j}^{2}+|x-\xi_{j}|^{2}\right)^{-\frac{N-2}{2}}
		\end{equation*}	
		we have that
		\begin{equation*}
			\begin{aligned}
				&\int_{B_{\eta}(\xi_{i})}\dfrac{\delta_{i}^{\frac{N+4}{2}}}{\left(\delta_{i}^{2}+\left|x-\xi_{i}\right|^{2}\right)^{\frac{N+4}{2}}}  \frac{\delta_{j}^{\frac{N-4}{2}}\delta_{j}^{2}}{\left(\theta_{x}\delta_{j}^{2}+\left|x-\xi_{j}\right|^{2}\right)^{\frac{N-2}{2}}}\,dx \\
				&=(\delta_{i}\delta_{j})^{\frac{N-4}{2}}\delta_{j}^{2}\int_{B_{\frac{\eta}{\delta_{i}}}(0)}\dfrac{1}{\left(1+\left|y\right|^{2}\right)^{\frac{N+4}{2}}}  \frac{1}{\left(\theta_{x}\delta_{j}^{2}+\left|\delta_{i}y+\xi_{i}-\xi_{j}\right|^{2}\right)^{\frac{N-2}{2}}} \,dy\\
				&=\frac{(\delta_{i}\delta_{j})^{\frac{N-4}{2}}}{\varepsilon^{N-2}}\delta_{j}^{2}\int_{B_{\frac{\eta}{\delta_{i}}}(0)}\dfrac{1}{\left(1+\left|y\right|^{2}\right)^{\frac{N+4}{2}}}  \frac{1}{\left(\theta_{x}(\frac{\delta_{j}}{\varepsilon})^{2}+\left|\frac{\delta_{i}}{\varepsilon}y+\frac{\xi_{i}-\xi_{j}}{\varepsilon}\right|^{2}\right)^{\frac{N-2}{2}}} \,dy\\
				&\leq C\frac{(\delta_{i}\delta_{j})^{\frac{N-4}{2}}}{\varepsilon^{N-2}}\delta_{j}^{2}\int_{B_{\frac{\eta}{\delta_{i}}}(0)}\frac{1}{\left(1+\left|y\right|^{2}\right)^{\frac{N+4}{2}}}  \frac{1}{\left(\left|\frac{\delta_{i}}{\varepsilon}y+\frac{\xi_{i}-\xi_{j}}{\varepsilon}\right|^{2}\right)^{\frac{N-2}{2}}}\,dy \\
				&=O\left(\frac{(\delta_{i}\delta_{j})^{\frac{N-4}{2}}}{\varepsilon^{N-4}}\left(\frac{\delta_{j}}{\varepsilon} \right) ^{2}\int_{\mathbb{R}^{N}}\frac{1}{(1+\left|y\right|^{2})^{\frac{N+4}{2}}}\,dx \right)  \\
				&=o(\varepsilon),
			\end{aligned}
		\end{equation*}
		because, since
		\begin{equation*}
			\eta:=\min\left\{d(\xi_{i}, \partial\Omega), d(\xi_{j}, \partial\Omega), \frac{|\xi_{i}-\xi_{j}|}{2} \right\},
		\end{equation*}
		then, for  $y\in B_{\frac{\eta}{\delta_{i}}}(0)$ we have
		\begin{equation*}
			\left|\frac{\delta_{i}}{\varepsilon}y+\frac{|\xi_{i}-\xi_{j}|}{\varepsilon}\right|\geq \frac{|\xi_{i}-\xi_{j}|}{\varepsilon}-\left|\frac{\delta_{i}}{\varepsilon}y\right|>\frac{|\xi_{i}-\xi_{j}|}{\varepsilon}-\frac{\eta}{\varepsilon}>\frac{\eta}{\varepsilon}=O(1).
		\end{equation*}

		Hence, using equations  \eqref{EstRes} and \eqref{EstH}, we obtain: 
		\begin{equation}\label{estp1}
			\begin{aligned}
				&\int_{B_{\eta}(\xi_{i})}a(x)\Delta^{2}(PU_{\delta_{i},\xi_{i}})PU_{\delta_{j},\xi_{j}}\,dx\\
				=&\int_{B_{\eta}(\xi)}a(x)U_{\delta_{i},\xi_{i}}^{\frac{N+4}{N-4}}\left(U_{\delta_{j},\xi_{j}}(x)-\alpha_{N}\delta_{j}^{\frac{N-4}{2}}H(x,\xi_{j})+R_{\delta_{j},\xi_{j}}(x) \right)\,dx\\
				=&\alpha_{N}^{p}\int_{B_{\eta}(\xi)}a(x) \frac{\delta_{i}^{\frac{N+4}{2}}}{(\delta_{i}^{2}+|x-\xi_{i}|^{2})^{\frac{N+4}{2}}}  \left(\frac{\delta_{j}^{\frac{N-4}{2}}}{(\delta_{j}^{2}+|x-\xi_{j}|^{2})^{\frac{N-4}{2}}} -\delta_{j}^{\frac{N-4}{2}}H(x,\xi_{j})\right)\,dx+o\left(\varepsilon\right) \\
				=&\alpha_{N}^{p}\int_{B_{\eta}(\xi)}a(x) \frac{\delta_{i}^{\frac{N+4}{2}}}{(\delta_{i}^{2}+|x-\xi_{i}|^{2})^{\frac{N+4}{2}}}  \left(\frac{\delta_{j}^{\frac{N-4}{2}}}{(\delta_{j}^{2}+|x-\xi_{j}|^{2})^{\frac{N-4}{2}}} -\frac{\delta_{j}^{\frac{N-4}{2}}}{|\xi_{i}-\widetilde{\xi_{j}}|^{N-4}}\right)\,dx+o(\varepsilon)\\
				=&\alpha_{N}^{p}(\delta_{i}\delta_{j})^{\frac{N-4}{2}}\left(\int_{B_{\frac{\eta}{\delta_{i}}}(0)}a(\delta y+\xi_{i}) \frac{1}{(1+|y|^{2})^{\frac{N+4}{2}}}  \left(\frac{1}{|\delta_{i}y+\xi_{i}-\xi_{j}|^{N-4}} -\frac{1}{|\delta_{i}y+\xi_{i}-\widetilde{\xi_{j}}|^{N-4}} \right)\,dy\right)(1+O(\varepsilon))\\
				=&\alpha_{N}^{p}\frac{(\delta_{i}\delta_{j})^{\frac{N-4}{2}}}{\varepsilon^{N-4}}\left(\int_{B_{\frac{\eta}{\delta_{i}}}(0)}a(\xi^{0}_{i}) \frac{1}{(1+|y|^{2})^{\frac{N+4}{2}}}  \left(\frac{1}{|\frac{\delta_{i}}{\varepsilon}y+\frac{\xi_{i}-\xi_{j}}{\varepsilon}|^{N-4}} -\frac{1}{|\frac{\delta_{i}}{\varepsilon}y+\frac{\xi_{i}-\widetilde{\xi_{j}}}{\varepsilon}|^{N-4}} \right)\,dy\right)\left(1+O(\varepsilon)\right)\\
				=&\alpha_{N}^{p}\frac{(\delta_{i}\delta_{j})^{\frac{N-4}{2}}}{\varepsilon^{N-4}}\left(\int_{B_{\frac{\eta}{\delta_{i}}}(0)}a(\xi^{0}_{i}) \frac{1}{(1+|y|^{2})^{\frac{N+4}{2}}}  \left(\frac{1}{|\frac{\xi_{i}-\xi_{j}}{\varepsilon}|^{N-4}} -\frac{1}{|\frac{\xi_{i}-\widetilde{\xi_{j}}}{\varepsilon}|^{N-4}} \right)\,dy\right)(1+O(\varepsilon)).
			\end{aligned}
		\end{equation}
		
		\bigskip

		From here, if $\xi_{i}^{0}\neq \xi_{j}^{0}$, then
		\begin{equation*}
			\begin{aligned}
				\frac{1}{\left|\frac{\xi_{i}-\xi_{j}}{\varepsilon}\right|^{N-4}} =&\frac{1}{\left|\frac{\xi_{i}^{0}-\xi_{j}^{0}}{\varepsilon}+t_{i}\nu(\xi_{i}^{0})+t_{j}\nu(\xi_{j}^{0})\right|^{N-4}} \\
				=&\frac{1}{\left|\frac{\xi_{i}^{0}-\xi_{j}^{0}}{\varepsilon}\right|^{N-4}}-\frac{N-4}{2}\frac{1}{\left|\frac{\xi_{i}^{0}-\xi_{j}^{0}}{\varepsilon}\right|^{N-2}}2\left(\frac{\xi_{i}^{0}-\xi_{j}^{0}}{\varepsilon}\cdot \left(t_{i}\nu(\xi_{i}^{0})+t_{j}\nu(\xi_{j}^{0})\right)\right)+h.o.t. \\
				=&\frac{1}{\left|\frac{\xi_{i}^{0}-\xi_{j}^{0}}{\varepsilon}\right|^{N-4}}+O\left(\varepsilon^{N-3}\right)\frac{1}{\left|\xi_{i}^{0}-\xi_{j}^{0}\right|^{N-3}}+h.o.t. \\
			\end{aligned}
		\end{equation*}
		Similarly, it is proven that
		\begin{equation*}
			\begin{aligned}
				\frac{1}{\left|\frac{\xi_{i}-\widetilde{\xi_{j}}}{\varepsilon}\right|^{N-4}}  =\frac{1}{\left|\frac{\xi_{i}^{0}-\xi_{j}^{0}}{\varepsilon}\right|^{N-4}}+O\left(\varepsilon^{N-3}\right)\frac{1}{\left|\xi_{i}^{0}-\xi_{j}^{0}\right|^{N-3}}+h.o.t. \\
			\end{aligned}
		\end{equation*}
		Hence, we have
		\begin{equation*}
			\begin{aligned}
				&	\dfrac{1}{\left|\frac{\xi_{i}-\xi_{j}}{\varepsilon}\right|^{N-4}} -\frac{1}{\left|\frac{\xi_{i}-\widetilde{\xi_{j}}}{\varepsilon}\right|^{N-4}}=O\left(\varepsilon^{N-3}\right)\frac{1}{\left|\xi_{i}^{0}-\xi_{j}^{0}\right|^{N-3}}+h.o.t. 
			\end{aligned}
		\end{equation*}
		
		\medskip
		Therefore, from \eqref{estp1}, we obtain
		\begin{equation*}
			\begin{aligned}
				&\int_{B_{\eta}(\xi_{i})}a(x)\Delta^{2}(PU_{\delta_{i},\xi_{i}})PU_{\delta_{j},\xi_{j}}\,dx\\
				=&\alpha_{N}^{p}\frac{(\delta_{i}\delta_{j})^{\frac{N-4}{2}}}{\varepsilon^{N-4}}\left(\int_{B_{\frac{\eta}{\delta_{i}}}(0)}a(\xi^{0}_{i}) \frac{1}{\left(1+\left|y\right|^{2}\right)^{\frac{N+4}{2}}}  \left(\frac{1}{\left|\frac{\xi_{i}-\xi_{j}}{\varepsilon}\right|^{N-4}} -\frac{1}{\left|\frac{\xi_{i}-\widetilde{\xi_{j}}}{\varepsilon}\right|^{N-4}} \right)\,dy\right)(1+O(\varepsilon))\\
				=&\alpha_{N}^{p}\dfrac{\left(\delta_{i}\delta_{j}\right)^{\frac{N-4}{2}}}{\varepsilon^{N-4}}\varepsilon^{N-3}\,O\left( \int_{\mathbb{R}^{N}}\frac{1}{\left(1+\left|y\right|^{2}\right)^{\frac{N+4}{2}}}  \,dy \right) \\
			=&o(\varepsilon).
			\end{aligned}
		\end{equation*}
		
		\vspace*{5mm }
		Thus, equation \eqref{inj} is proven. Now we continue with the proof of estimate \eqref{isj}.  Let us assume then that  $\xi_{i}^{0}=\xi_{j}^{0}$, then
		\begin{equation*}
			\begin{aligned}
				\frac{1}{\left|\frac{\xi_{i}-\xi_{j}}{\varepsilon}\right|^{N-4}}-	\frac{1}{\left|\frac{\xi_{i}-\widetilde{\xi_{j}}}{\varepsilon}\right|^{N-4}}&=\frac{1}{\left|\frac{\xi_{i}^{0}-\xi_{i}^{0}}{\varepsilon}+t_{i}\nu(\xi_{i}^{0})-t_{j}\nu(\xi_{i}^{0})\right|^{N-4}}-\frac{1}{\left|\frac{\xi_{i}^{0}-\xi_{i}^{0}}{\varepsilon}+t_{i}\nu(\xi_{i}^{0})+t_{j}\nu(\xi_{i}^{0})\right|^{N-4}}\\ =&\frac{1}{\left|\frac{\xi_{i}^{0}-\xi_{i}^{0}}{\varepsilon}+(t_{i}-t_{j})\nu(\xi_{i}^{0})\right|^{N-4}}-\frac{1}{\left|\frac{\xi_{i}^{0}-\xi_{i}^{0}}{\varepsilon}+(t_{i}+t_{j})\nu(\xi_{i}^{0})\right|^{N-4}} \\
				=&\frac{1}{\left|t_{i}-t_{j}\right|^{N-4}}-\frac{1}{\left|t_{i}+t_{j}\right|^{N-4}} +o(1)\\
			\end{aligned}
		\end{equation*}
		By substituting this equation into equation \eqref{estp1}, we arrive at: 
		\begin{equation}
			\begin{aligned}
				&\int_{B_{\eta}(\xi_{i})}a(x)\Delta^{2}(PU_{\delta_{i},\xi_{i}})PU_{\delta_{j},\xi_{j}}\,dx\\
				=&\alpha_{N}^{p}\frac{(\delta_{i}\delta_{j})^{\frac{N-4}{2}}}{\varepsilon^{N-4}}\left(\int_{B_{\frac{\eta}{\delta_{i}}}(0)}a(\xi^{0}_{i}) \frac{1}{(1+|y|^{2})^{\frac{N+4}{2}}}  \left(\frac{1}{\left|\frac{\xi_{i}-\xi_{j}}{\varepsilon}\right|^{N-4}} -\frac{1}{\left|\frac{\xi_{i}-\widetilde{\xi_{j}}}{\varepsilon}\right|^{N-4}} \right)\,dy\right)(1+O(\varepsilon))\\
				=&\alpha_{N}^{p}\frac{(\delta_{i}\delta_{j})^{\frac{N-4}{2}}}{\varepsilon^{N-4}}\left(\int_{B_{\frac{\eta}{\delta_{i}}}(0)}a(\xi^{0}_{i}) \frac{1}{(1+|y|^{2})^{\frac{N+4}{2}}}  \left(\frac{1}{\left|t_{i}-t_{j}\right|^{N-4}}-\frac{1}{\left|t_{i}+t_{j}\right|^{N-4}} \right)\,dy\right)(1+O(\varepsilon))\\
				=&a(\xi^{0}_{i})\,\varepsilon\,(d_{i}d_{j})^{\frac{N-4}{2}}\left(\frac{1}{|t_{i}-t_{j}|^{N-4}}-\frac{1}{|t_{i}+t_{j}|^{N-4}} \right)\gamma_{2} +o(\varepsilon).\\
			\end{aligned}
		\end{equation}
		This concludes the proof of the Proposition.
	\end{proof}

	\bigskip\bigskip
	
	\section{Error Estimates}\label{sect:Error}

	This section focuses on estimating the leading terms of the error 	$E_{(\boldsymbol{\xi},\textbf{d}, \textbf{t})}$ defined in \eqref{Error}. 

\medskip

    We begin by presenting some preliminary estimates. As before, consider the bubble:
	\begin{equation*}
		U_{\delta,\xi}(x):=\alpha_{N}\left(\frac{\delta}{\delta^{2}+|x-\xi|^{2}} \right)^{\frac{N-4}{2}},\ \  x,\xi\in \mathbb{R}^{N}, \delta\in \mathbb{R}^{+} 
	\end{equation*}
	then,
	\begin{equation*}
		\begin{aligned}
			|U_{\delta,\xi}|_{q}^{q}&=\alpha_{N}^{q}\int_{\Omega}\left(\frac{\delta}{\delta^{2}+|x-\xi|^{2}} \right)^{\frac{N-4}{2}q}\,dx\\
			&=\alpha_{N}^{q}\int_{\frac{\Omega-\xi}{\delta}}\frac{\delta^{\frac{N-4}{2}q+N}}{\delta^{(N-4)q}\left( 1+|y|^{2}\right) ^{\frac{N-4}{2}q}}\,dy =O\left(\delta^{-\frac{N-4}{2}q+N}\right).
		\end{aligned}
	\end{equation*}
	Hence we have:
	\begin{equation*}
		\left|U_{\delta,\xi}\right|_{q}=O(\delta^{\frac{N}{q}-\frac{N-4}{2}}), \ \ \ \ \ \ \mbox{ if } q>\frac{N}{N-4},
	\end{equation*}
	and furthermore, since
	\begin{equation*}
		\begin{aligned}
			\left|U_{\delta,\xi}^{\frac{N+4}{N-4}}\right|_{q}^{q}&=\alpha_{N}^{q}\int_{\Omega}\left(\frac{\delta}{\delta^{2}+|x-\xi|^{2}} \right)^{\frac{N-4}{2}\left(\frac{N+4}{N-4}q\right)}\,dx\\
			&=\alpha_{N}^{q}\int_{\frac{\Omega-\xi}{\delta}}\frac{\delta^{\frac{N+4}{2}q+N}}{\delta^{(N+4)q}\left( 1+|y|^{2}\right) ^{\frac{N+4}{2}q}}\,dy =O\left(\delta^{-\frac{N+4}{2}q+N}\right),
		\end{aligned}
	\end{equation*}
	we obtain
	\begin{equation*}
		\left|U_{\delta,\xi}^{\frac{N+4}{N-4}}\right|_{q}=O\left(\delta^{\frac{N}{q}-\frac{N+4}{2}}\right), \ \ \ \ \ \ \mbox{ if } q\geq 1.
	\end{equation*}

\medskip
    
Next we estimate some terms related to the derivatives of the bubble. 
	
	\begin{proposition}\label{tmij}
		Let $\xi_{i},\xi_{j}\in\Omega$ and $\eta$ defined as in Remark \ref{Rmk2}. The following estimates hold:
		\begin{equation}\label{est1}
			\int_{\Omega}\nabla\left( \Delta(PU_{\delta_{i},\xi_{i}})\right)PU_{\delta_{j},\xi_{j}}\,dx=O\left(\frac{\delta_{i}}{\delta_{j}}\right)^{\frac{N-4}{2}} O\left(\delta_{j}^{\frac{N-4}{N-3}+\beta_{1}}\right),
		\end{equation}
		and
		\begin{equation}\label{est2}
			\int_{\Omega}\Delta(PU_{\delta_{i},\xi_{i}})PU_{\delta_{j},\xi_{j}}\,dx=O\left(\frac{\delta_{i}}{\delta_{j}}\right)^{\frac{N-4}{2}} O\left(\delta_{j}^{\frac{N-4}{N-3}+\beta_{2}}\right).
		\end{equation}
		Moreover, we also have
		\begin{equation}\label{est3}
			\left|  \nabla_{x}\left( \Delta\left(PU_{\delta_{i},\xi_{i}}\right)\right)\right| _{\frac{2N}{N+4}} =O\left(\delta^{\frac{N-4}{2(N-3)}+\beta}\right),
		\end{equation}
		and
		\begin{equation}\label{est4}
			\left| \Delta\left(PU_{\delta_{i},\xi_{i}}\right)\right| _{\frac{2N}{N+4}} =O\left(\delta^{\frac{N-4}{2(N-3)}+\beta}\right).
		\end{equation}
		
	\end{proposition}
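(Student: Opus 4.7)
The plan is to derive all four estimates from the integral representation
\begin{equation*}
-\Delta(PU_{\delta,\xi})(x) = \int_\Omega G_\Delta(x,y)\, U_{\delta,\xi}^{(N+4)/(N-4)}(y)\, dy,
\end{equation*}
which follows from $\Delta^{2}PU_{\delta,\xi} = U_{\delta,\xi}^{(N+4)/(N-4)}$, the Navier boundary conditions, and the factorization $G(x,y)=\int G_\Delta(x,z)G_\Delta(z,y)\,dz$ obtained in Section \ref{sect:Green-function}. Differentiating in $x$ under the integral sign and invoking the pointwise bounds $|G_\Delta(x,y)|\leq C|x-y|^{-(N-2)}$ and $|\nabla_x G_\Delta(x,y)|\leq C|x-y|^{-(N-1)}$ provided by Corollary \ref{coro1} and the remark following it give pointwise controls of $\Delta PU_{\delta,\xi}$ and $\nabla(\Delta PU_{\delta,\xi})$ by Riesz-type potentials of $U_{\delta,\xi}^{(N+4)/(N-4)}$.

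To obtain \eqref{est3} and \eqref{est4}, I would take the $L^{2N/(N+4)}$ norm of these Riesz potentials. When the exponents are admissible I would invoke the Hardy--Littlewood--Sobolev inequality; in the low-dimensional regime $5\leq N\leq 8$, where the conjugate exponent $2N/(N+8)$ drops to or below $1$, I would instead integrate directly on the bounded domain $\Omega$, splitting the integration at the concentration scale $|x-\xi|=\delta$ and rescaling $y=\xi+\delta z$ to reduce to dimension-free integrals. In every case the outcome scales as $O(\delta^{2})$, and since $(N-4)/(2(N-3))<2$ for every $N\geq 5$ the statements follow with, e.g., $\beta:=2-(N-4)/(2(N-3))>0$.

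For \eqref{est1} and \eqref{est2}, I would insert into the integrand the decomposition from Proposition \ref{Propaprox},
\begin{equation*}
PU_{\delta_{j},\xi_{j}} = U_{\delta_{j},\xi_{j}} - \alpha_{N}\delta_{j}^{(N-4)/2}H(\cdot,\xi_{j}) + R_{\delta_{j},\xi_{j}},
\end{equation*}
together with the analogous decomposition for $\Delta PU_{\delta_{i},\xi_{i}}$ arising from the symmetry $\Delta_{y}H=\Delta_{x}H$ recorded in \eqref{symH} and the explicit formula for $\Delta U_{\delta_{i},\xi_{i}}$. The principal term $\int \Delta U_{\delta_{i},\xi_{i}}\cdot U_{\delta_{j},\xi_{j}}\,dx$ and its gradient analogue are evaluated by a rescaling argument in the spirit of Propositions \ref{tmi} and \ref{propij}. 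Cross terms involving $H(\cdot,\xi_{j})$ or $\Delta_{x}H(\cdot,\xi_{i})$ pick up an extra factor $\delta^{(N-4)/2}$ that is crucial for extracting the exponent $(N-4)/(N-3)$ in $\delta_{j}$; they are bounded using the sharp estimates \eqref{esth}, \eqref{estlaphx} and \eqref{gradlap} of Lemma \ref{LAH}, while the remainder contributions containing $R_{\delta,\xi}$ are absorbed using \eqref{res1} as in Remark \ref{Rmk2}.

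The main obstacle is producing the sharp exponent $(N-4)/(N-3)+\beta$ in $\delta_{j}$, which is twice what a direct H\"older application combined with \eqref{est3}--\eqref{est4} would yield: the naive split $|\int \Delta PU_{i}\cdot PU_{j}|\leq |\Delta PU_{i}|_{2N/(N+4)}\,|PU_{j}|_{2N/(N-4)}$ only produces $O(\delta_{i}^{(N-4)/(2(N-3))+\beta})$. The additional gain has to be extracted by exploiting the two-scale structure of the integrand: after integration by parts one transfers a Laplacian onto the bubble concentrating on the larger scale, and an additional rescaling in the dominant sub-integral yields the missing factor $\delta_{j}^{(N-4)/(2(N-3))}$ (together with the asymmetric prefactor $(\delta_{i}/\delta_{j})^{(N-4)/2}$). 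Tracking simultaneously the concentration of both bubbles and the proximity of $\xi_{i},\xi_{j}$ to $\partial\Omega$ in each of these sub-integrals is the most delicate technical point, and it requires joint use of Lemma \ref{LAH}, Corollary \ref{coro1}, and the reflection $\widetilde{\xi}$ introduced in Section \ref{sect:Green-function}.
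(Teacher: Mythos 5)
For \eqref{est3} and \eqref{est4} your plan is essentially the paper's: represent $\Delta PU_{\delta,\xi}$ as a Newtonian potential of $U_{\delta,\xi}^{(N+4)/(N-4)}$ via $G_\Delta$, and control its $L^{2N/(N+4)}$ norm with a convolution inequality (the paper uses Young's inequality on the bounded domain, choosing $|Q_1|_r$, $|Q_2|_r$ with $r$ strictly below the HLS endpoint, which is what makes the low-dimensional case harmless; this is the same spirit as your two-regime split). Note, though, that your assertion that the outcome ``scales as $O(\delta^2)$'' in every dimension is incorrect: on a bounded domain the rescaled integral of $|\Delta U_{1,0}|^{2N/(N+4)}$ diverges for $5\le N\le 8$, and the correct order there is $O(\delta^{(N-4)/2})$, which is what the paper's choice $r'=1$ produces. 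This does not invalidate your conclusion because $(N-4)/2>(N-4)/(2(N-3))$, but the explicit formula $\beta=2-(N-4)/(2(N-3))$ you propose is not supported by the estimate.

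For \eqref{est1} and \eqref{est2} there is a genuine gap. You correctly diagnose that the naive H\"older splitting through \eqref{est3}--\eqref{est4} is short by a factor of $\delta_j^{(N-4)/(2(N-3))}$, but the remedy you propose---re-expanding $PU_j$ into $U_j-\alpha_N\delta_j^{(N-4)/2}H(\cdot,\xi_j)+R_{\delta_j,\xi_j}$, doing the same for $\Delta PU_i$ via $\Delta_x H$, and then invoking integration by parts and a rescaling on the dominant sub-integral---is not a concrete plan, and it also doesn't obviously deliver the asymmetric prefactor $(\delta_i/\delta_j)^{(N-4)/2}$: integrating by parts simply transfers $\Delta$ from $PU_i$ to $PU_j$ and returns an expression of the same form. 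The paper avoids the decomposition entirely: it bounds the double integral $\int_\Omega\int_\Omega |x-y|^{-(N-1)}U_i^{p-1}(y)\,PU_j(x)\,dy\,dx$ (resp.\ with the kernel $|x-y|^{-(N-2)}$) directly by Young's inequality with \emph{three} factors, $|Q_1|_{\Omega,r}\,|U_i^{p-1}|_1\,|U_j|_{r'}$, and the key point is that $|U_i^{p-1}|_1=O(\delta_i^{(N-4)/2})$ and $|U_j|_{r'}=O(\delta_j^{N/r'-(N-4)/2})$ are both small, which yields $(\delta_i/\delta_j)^{(N-4)/2}\delta_j^{N/r'}$ with $N/r'>(N-4)/(N-3)$ by a suitable choice of $r'$. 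That observation---treat the full bilinear form as a convolution and use all three H\"older slots---is what your proposal is missing, and without it the extra $\delta_j^{(N-4)/(2(N-3))}$ you need is not accounted for.
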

	\bigskip
	\begin{proof}
		
		As the function $(\Delta PU_{\delta_{i},\xi_{i}})$ satisfies the problem:
		\begin{equation}
			\left\{
			\begin{array}
				[c]{ll}%
				\Delta(\Delta PU_{\delta_{i},\xi_{i}}) =U_{\delta_{i},\xi_{i}}^{\frac{N+4}{N-4}} & \text{in }\Omega,\\
				\Delta PU_{\delta_{i},\xi_{i}}=0&  \text{on }\partial\Omega
			\end{array}
			\right.  %
		\end{equation}
		we have that:
		\begin{equation*}
			-\Delta(PU_{\delta_{i},\xi_{i}})(x)=	\int_{\Omega}G_{\Delta}(x,y)\,U_{\delta_{i},\xi_{i}}(y)^{\frac{N+4}{N-4}}\,dy
		\end{equation*}
		and
		\begin{equation*}
			-	\nabla_{x}\left( \Delta(PU_{\delta_{i},\xi_{i}})\right)(x)=	\int_{\Omega}\nabla_{x}(G_{\Delta}(x,y))\,U_{\delta_{i},\xi_{i}}(y)^{\frac{N+4}{N-4}}\,dy,
		\end{equation*}
		where $G_{\Delta}(x,y)$ is the Green's function for the Laplacian with Dirichlet boundary condition.
		
		We begin by proving estimate  \eqref{est1}.	 Choose an element $r'\in \left(N, \frac{N(N-3)}{N-4}\right)$. If $r$  is such that  $\frac{1}{r}+\frac{1}{r'}+1=2$ then $r\in \left(\frac{N(N-3)}{(N-2)^{2}}, \frac{N}{N-1}\right)$.  
		For these values of $r$ and $r'$, we have the following implications:
		\begin{equation*}
			\begin{aligned}
				r<\frac{N}{N-1} &\implies |Q_{1}(x)|_{\Omega,r}:=\int_{\Omega}\frac{1}{|x|^{r(N-1)}}\,dx<\infty\\
				r>\frac{N(N-3)}{(N-2)^{2}}\geq 1 &\implies
				\left|U_{\delta,\xi}^{\frac{N+4}{N-4}}\right|_{1}=O\left(\delta^{N-\frac{N+4}{2}}\right)\\
				r'<\frac{N(N-3)}{N-4}&\implies  \frac{N}{r'}>\frac{N-4}{N-3} \\
				r'>N\geq\frac{N}{N-4} &\implies  \left|U_{\delta,\xi}\right|_{r'}=O\left(\delta^{\frac{N}{r'}-\frac{N-4}{2}}\right). \\
			\end{aligned}
		\end{equation*}

		To estimate \eqref{est1} we rely on Young's inequality (see Theorem 4.2 in \cite{lieb2001analysis}). Using \eqref{G2}, we obtain:
		\begin{equation*}
			\begin{aligned}
				\left|\int_{\Omega}\nabla\left( \Delta(PU_{\delta_{i},\xi_{i}})\right)PU_{\delta_{i},\xi_{i}}\,dx\right|&\leq 	\int_{\Omega}\int_{\Omega}\left|\nabla_{x}(G_{\Delta}(x,y))U_{\delta_{i},\xi_{i}}(y)^{\frac{N+4}{N-4}}PU_{\delta_{j},\xi_{j}}(x)\right|\,dy\,dx\\
				&\leq C	\int_{\Omega}\int_{\Omega}\left|\frac{1}{|x-y|^{N-1}}U_{\delta_{i},\xi_{i}}(y)^{\frac{N+4}{N-4}}PU_{\delta_{j},\xi_{j}}(x)\right|\,dy\,dx\\
				&\leq C\left|Q_{1}(x)\right|_{\Omega,r}\left|U_{\delta_{i},\xi_{i}}(y)^{\frac{N+4}{N-4}}\right|_{1}\left|U_{\delta_{j},\xi_{j}}(x)\right|_{r'}\\
				&=O\left(\delta_{i}^{N-\frac{N+4}{2}}\right)O\left(\delta_{j}^{\frac{N}{r'}-\frac{N-4}{2}}\right)\\
				&=O\left(\frac{\delta_{i}}{\delta_{j}}\right)^{\frac{N-4}{2}} O\left(\delta_{j}^{\frac{N}{r'}}\right)\\
				&=O\left(\frac{\delta_{i}}{\delta_{j}}\right)^{\frac{N-4}{2}} O\left(\delta_{j}^{\frac{N-4}{N-3}+\beta}\right),
			\end{aligned}
		\end{equation*}
		for some $\beta>0$.
		\bigskip
		
		Now, we continue with the proof of equation \eqref{est2}. We first consider the case where $N\geq 6$. Choose an element $r'$ such that $r'\in \left(\frac{N}{2}, \frac{N(N-3)}{N-4}\right)$. If $r$  is such that  $\frac{1}{r}+\frac{1}{r'}+1=2$ then   $r\in\left(\frac{N(N-3)}{(N-2)^{2}},\frac{N}{N-2}\right)$. For these $r$ and $r'$, the following implications hold:
		\begin{equation*}
			\begin{aligned}
				r<\frac{N}{N-2} &\implies \left|Q_{2}(x)\right|_{\Omega,r}:=\int_{\Omega}\frac{1}{|x|^{r(N-2)}}\,dx<\infty\\
				r>\frac{N(N-3)}{(N-2)^{2}}\geq 1 &\implies
				\left|U_{\delta,\xi}^{\frac{N+4}{N-4}}\right|_{1}=O\left(\delta^{N-\frac{N+4}{2}}\right)\\
				r'<\frac{N(N-3)}{N-4}&\implies  \frac{N}{r'}>\frac{N-4}{N-3} \\
				r'>\frac{N}{2}\geq\frac{N}{N-4} &\implies  \left|U_{\delta,\xi}\right|_{r'}=O\left(\delta^{\frac{N}{r'}-\frac{N-4}{2}}\right).
			\end{aligned}
		\end{equation*}
		For example, if  $N=5$, $r'\in(5,10)$ and  $r$  is such that  $\frac{1}{r}+\frac{1}{r'}+1=2$,  then $r\in\left(\frac{10}{9}, \frac{5}{4}\right)$. Similar implications can be derived easily.

		To estimate \eqref{est2} we use again Young's inequality,  and apply  \eqref{G1} to obtain:
		\begin{equation*}
			\begin{aligned}
				\left|\int_{\Omega}\Delta(PU_{\delta_{i},\xi_{i}})PU_{\delta_{j},\xi_{j}} \,dx\right|&\leq\int_{\Omega}\int_{\Omega}\left|G_{\Delta}(x,y)U_{\delta_{i},\xi_{i}}(y)^{\frac{N+4}{N-4}}PU_{\delta_{j},\xi_{j}}(x)\right|\,dy\,dx\\
				&\leq C\int_{\Omega}\int_{\Omega}\frac{1}{|x-y|^{n-2}}U_{\delta_{i},\xi_{i}}(y)^{\frac{N+4}{N-4}}PU_{\delta_{j},\xi_{j}}(x)\,dy \,dx\\
				&\leq C\left|Q_{2}(x)\right|_{\Omega,r}|U_{\delta_{i},\xi_{i}}(y)^{\frac{N+4}{N-4}}|_{1}\left|U_{\delta_{j},\xi_{j}}(x)\right|_{r'}\\
				&=O(\delta_{i}^{\frac{N}{1}-\frac{N+4}{2}})O\left(\delta_{j}^{\frac{N}{r'}-\frac{N-4}{2}}\right)\\
				&=O\left(\frac{\delta_{i}}{\delta_{j}}\right)^{\frac{N-4}{2}} O\left(\delta_{j}^{\frac{N}{r'}}\right)\\
				&=O\left(\frac{\delta_{i}}{\delta_{j}}\right)^{\frac{N-4}{2}} O\left(\delta_{j}^{\frac{N-4}{N-3}+\beta}\right),
			\end{aligned}
		\end{equation*}
		for some $\beta>0$.
		
		\bigskip
		
		Now we give the proof of estimation \eqref{est3}.	 For $N\geq 6$, choose an element $r'\in \left(\frac{2N}{N+6}, \frac{2N(N-3)}{N^{2}+2N-16}\right)$. If $r$  is such that  $\frac{1}{r}+\frac{1}{r'}=1+\frac{N+4}{2N}$ then $r\in \left[1, \frac{N}{N-1}\right)$.  
		Hence, for these $r$ and $r'$, we have the following implications:
		\begin{equation*}
			\begin{aligned}
				r<\frac{N}{N-1} &\implies |Q_{1}(x)|_{\Omega,r}:=\int_{\Omega}\frac{1}{|x|^{r(n-1)}}\,dx<\infty\\
				r'<\frac{2N(N-3)}{N^{2}+2N-16}&\implies  \frac{N}{r'}-\frac{N+4}{2}>\frac{N-4}{2(N-3)} \\
				r'>\frac{2N}{N+6}\geq 1 &\implies  \left|U_{\delta,\xi}^{\frac{N+4}{N-4}}\right|_{r'}=O\left(\delta^{\frac{N}{r'}-\frac{N+4}{2}}\right). \\
			\end{aligned}
		\end{equation*}
		
		If $5\leq N<6$, choose $r=\frac{2N}{N+4}$ and $r'=1$. Then 
		\begin{equation*}
			\begin{aligned}
				r<\frac{N}{N-1} &\implies |Q_{1}(x)|_{\Omega,r}:=\int_{\Omega}\frac{1}{|x|^{r(N-1)}}\,dx<\infty\\
				r'=1&\implies  \frac{N}{r'}-\frac{N+4}{2}>\frac{N-4}{2(N-3)} \\
				r'= 1 &\implies  \left|U_{\delta,\xi}^{\frac{N+4}{N-4}}\right|_{r'}=O\left(\delta^{\frac{N}{r'}-\frac{N+4}{2}}\right). \\
			\end{aligned}
		\end{equation*}
		To prove  \eqref{est3}, we use  Remark $(2)$ from Theorem 4.2 in \cite{lieb2001analysis} to obtain:
		\begin{equation*}
			\begin{aligned}
				\left|  \nabla_{x}\left( \Delta\left(PU_{\delta_{i},\xi_{i}}\right)\right)\right| _{\frac{2N}{N+4}} &=\left|\int_{\Omega}\nabla_{x}(G_{\Delta}(x,y))U_{\delta_{i},\xi_{i}}(y)^{\frac{N+4}{N-4}}\,dy\right|_{\frac{2N}{N+4}}\\
				&\leq C \left|Q_{1}(x)\right|_{\Omega,r}\left|U_{\delta,\xi}^{\frac{N+4}{N-4}}\right|_{r'}=O\left(\delta^{\frac{N}{r'}-\frac{N+4}{2}}\right) =O\left(\delta^{\frac{N-4}{2(N-3)}+\beta}\right),
			\end{aligned}
		\end{equation*}
		for any $N>4$.
		
		\bigskip
		
		The proof of estimation \eqref{est4} is obtained in a similar manner:
		If $N\geq 8$, choose an element $r'\in (\frac{2N}{N+8}, \frac{2N(N-3)}{N^{2}+2N-16})$. Let $r$  be such that  $\frac{1}{r}+\frac{1}{r'}=1+\frac{N+4}{2N}$,  then $r\in [1, \frac{N}{N-2})$.  
		For these choices of  $r$ and $r'$, we have the following implications:
		\begin{equation*}
			\begin{aligned}
				r<\frac{N}{N-2} &\implies \left|Q_{2}(x)\right|_{\Omega,r}:=\int_{\Omega}\frac{1}{|x|^{r(N-2)}}\,dx<\infty\\
				r'<\frac{2N(N-3)}{N^{2}+2N-16}&\implies  \frac{N}{r'}-\frac{N+4}{2}>\frac{N-4}{2(N-3)} \\
				r'>\frac{2N}{N+8}\geq 1 &\implies  \left|U_{\delta,\xi}^{\frac{N+4}{N-4}}\right|_{r'}=O\left(\delta^{\frac{N}{r'}-\frac{N+4}{2}}\right). \\
			\end{aligned}
		\end{equation*}
		On the other hand, if $5\leq N<8$, choose $r=\frac{2N}{N+4}$ and $r'=1$. Then 
		\begin{equation*}
			\begin{aligned}
				r<\frac{N}{N-2} &\implies |Q_{2}(x)|_{\Omega,r}:=\int_{\Omega}\frac{1}{|x|^{r(N-2)}}\,dx<\infty\\
				r'=1&\implies  \frac{N}{r'}-\frac{N+4}{2}>\frac{N-4}{2(N-3)} \\
				r'= 1 &\implies  \left|U_{\delta,\xi}^{\frac{N+4}{N-4}}\right|_{r'}=O\left(\delta^{\frac{N}{r'}-\frac{N+4}{2}}\right). \\
			\end{aligned}
		\end{equation*}
		
		We use the Remark $(2)$ from Theorem 4.2 in \cite{lieb2001analysis},  to obtain:
		\begin{equation*}
			\begin{aligned}
				\left| \Delta(PU_{\delta_{i},\xi_{i}})\right| _{\frac{2N}{N+4}}&= \left|\int_{\Omega}G_{\Delta}(x,y)U_{\delta_{i},\xi_{i}}(y)^{\frac{N+4}{N-4}}\,dy \right| _{\frac{2N}{N+4}}\\
				&\leq C |Q_{2}(x)|_{\Omega,r}\left|U_{\delta,\xi}^{\frac{N+4}{N-4}}\right|_{r'}=O\left(\delta^{\frac{N}{r'}-\frac{N+4}{2}}\right) =O\left(\delta^{\frac{N-4}{2(N-3)}+\beta}\right),
			\end{aligned}
		\end{equation*}
		for any $N>4$.  This concludes the proof of the proposition.
		
	\end{proof}
	
	\bigskip
	
	With the above estimates in place, we are now in a position to estimate the error term $E_{(\boldsymbol{\xi},\textbf{d}, \textbf{t})}$ . Our attention will be directed toward the estimation in case \eqref{typ2}, as the estimation in case \eqref{typ1} is more straightforward and can be directly obtained from the former.
	Let $(\zeta^{0}, \textbf{d}, \textbf{t})\in \Sigma_{2}$ and, as before, let 
	\begin{equation*}
		\eta=\min\left\{d(\xi_{1}, \partial\Omega), d(\xi_{2}, \partial\Omega), \frac{|\xi_{1}-\xi_{2}|}{2} \right\}=\min\left\{t_{1}\varepsilon,t_{2}\varepsilon,\frac{\varepsilon|t_{1}-t_{2}|}{2} \right\}.
	\end{equation*}
	\begin{lemma}\label{error}
		It holds true for some $\sigma > 0$ that
		\[
		\|E_{(\zeta^0,\textbf{d}, \textbf{t})}\| = O\left( \varepsilon^{\frac{1}{2} + \sigma} \right),
		\]
		on compact subsets of $\Sigma_{2}$.
	\end{lemma}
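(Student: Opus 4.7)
The plan is to identify $\|E_{(\zeta^0,\textbf{d},\textbf{t})}\|$ with the $a$-norm of $W := V_{(\zeta^0,\textbf{d},\textbf{t})} - i^{*}(f_{\varepsilon}(V_{(\zeta^0,\textbf{d},\textbf{t})}))$, which by construction satisfies $\Delta(a\Delta W) = E_{(\zeta^0,\textbf{d},\textbf{t})}$ with Navier boundary conditions. Testing against $W$, integrating by parts twice, and applying H\"older together with the Sobolev embedding $H^{2}\cap H^{1}_{0}(\Omega)\hookrightarrow L^{2N/(N-4)}(\Omega)$ yields $\|W\|_{a}\leq C\|E_{(\zeta^0,\textbf{d},\textbf{t})}\|_{L^{2N/(N+4)}(\Omega)}$. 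Thus it suffices to estimate the residual in $L^{2N/(N+4)}$.

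Using $\Delta(a\Delta V) = a\Delta^{2}V + 2\nabla a\cdot\nabla(\Delta V) + (\Delta a)\Delta V$ and $\Delta^{2}(PU_{\delta_{i},\xi_{i}}) = U_{\delta_{i},\xi_{i}}^{(N+4)/(N-4)}$, I would split
\[
E_{(\zeta^0,\textbf{d},\textbf{t})} = a\bigl(U_{\delta_{1},\xi_{1}}^{p-1} - U_{\delta_{2},\xi_{2}}^{p-1} - |V|^{p-2-\varepsilon}V\bigr) + 2\nabla a\cdot\nabla(\Delta V) + (\Delta a)(\Delta V).
\]
For the two commutator pieces involving derivatives of $a$, estimates \eqref{est3}--\eqref{est4} of Proposition \ref{tmij} give $\|\nabla\Delta(PU_{\delta_{i},\xi_{i}})\|_{2N/(N+4)}$ and $\|\Delta(PU_{\delta_{i},\xi_{i}})\|_{2N/(N+4)}$ of order $\delta_{i}^{(N-4)/(2(N-3))+\beta}$ for some $\beta>0$. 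Since $\delta_{i}=d_{i}\varepsilon^{(N-3)/(N-4)}$ and $\nabla a,\Delta a\in L^{\infty}$, this contribution is precisely $O(\varepsilon^{1/2+\sigma})$ with $\sigma>0$ inherited from $\beta>0$.

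For the nonlinear remainder $\mathcal{N}:=U_{\delta_{1},\xi_{1}}^{p-1}-U_{\delta_{2},\xi_{2}}^{p-1}-|V|^{p-2-\varepsilon}V$ I would introduce the decomposition $\mathcal{N}=A_{1}+A_{2}+A_{3}$ with
\[
A_{1} = U_{1}^{p-1}-U_{2}^{p-1}-|U_{1}-U_{2}|^{p-2}(U_{1}-U_{2}),\qquad A_{2} = \bigl(1-|U_{1}-U_{2}|^{-\varepsilon}\bigr)|U_{1}-U_{2}|^{p-2}(U_{1}-U_{2}),
\]
and $A_{3} = |U_{1}-U_{2}|^{p-2-\varepsilon}(U_{1}-U_{2}) - |V|^{p-2-\varepsilon}V$, where I abbreviate $U_{i}=U_{\delta_{i},\xi_{i}}$. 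Each $A_{i}$ is estimated in $L^{2N/(N+4)}$ on $B_{\eta}(\xi_{1})$, $B_{\eta}(\xi_{2})$, and the complement, with $\eta$ as in Remark \ref{Rmk2}. On $B_{\eta}(\xi_{i})$ the bubble $U_{i}$ dominates, so I Taylor-expand the nonlinearity around $U_{i}$: for $A_{1}$ the leading term is the interaction $(p-1)U_{i}^{p-2}U_{j}$, and rescaling $y=(x-\xi_{i})/\delta_{i}$ combined with $U_{j}(\xi_{i})\sim \delta_{j}^{(N-4)/2}|\xi_{1}-\xi_{2}|^{-(N-4)}$ produces an $L^{2N/(N+4)}$-norm of order $(\delta_{1}\delta_{2})^{(N-4)/2}\varepsilon^{-(N-4)}=O(\varepsilon)$, exactly the scaling dictated by $\delta_{i}\sim\varepsilon^{(N-3)/(N-4)}$. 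For $A_{2}$ I use $|W|^{-\varepsilon}=1-\varepsilon\log|W|+O(\varepsilon^{2}\log^{2}|W|)$ together with $\|U^{p-1}\|_{2N/(N+4)}=O(1)$ to obtain $O(\varepsilon|\log\varepsilon|)$. For $A_{3}$ the mean-value theorem applied to $f_{\varepsilon}$ together with Proposition \ref{Propaprox}, in particular the pointwise bound $|PU_{i}-U_{i}|\leq C\delta_{i}^{(N-4)/2}|x-\widetilde{\xi}_{i}|^{-(N-4)}$, reduces matters to an integral of the same type as $A_{1}$.

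The main obstacle is the analysis of $A_{1}$ in the bubble neighborhoods: since both bubbles collapse to the same boundary point, the ratio $|\xi_{1}-\xi_{2}|/\delta_{i}\sim \varepsilon^{-1/(N-4)}\to\infty$ keeps them well separated on their own scale, but their slowly decaying tails still generate substantial cross-terms at distance $\varepsilon$. One must carefully combine the Taylor expansion of $|U_{1}-U_{2}|^{p-2}(U_{1}-U_{2})$ around each $U_{i}$ with the cancellation provided by $U_{j}^{p-1}$, and verify that the residues, after rescaling, integrate to $O(\varepsilon^{1/2+\sigma})$ (for $N\leq 12$ one obtains $\sigma=1/2$ from the straightforward $O(\varepsilon)$ scaling, whereas for $N>12$ a divergent integral on the ball of radius $\eta/\delta_{i}$ must be tracked, giving $\sigma=4/(N-4)$). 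Following \cite{bartsch2006existence}, these combined estimates yield the claimed bound.
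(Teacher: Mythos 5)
Your proposal is correct and follows essentially the same route as the paper: bound $\|E\|$ by $|E|_{L^{2N/(N+4)}}$ via $i^{*}$, Leibniz-expand $\Delta(a\Delta V)$, dispatch the $\nabla a$ and $\Delta a$ terms with estimates \eqref{est3}--\eqref{est4} of Proposition \ref{tmij}, and estimate the purely nonlinear discrepancy by splitting $\Omega$ into $B_\eta(\xi_1)$, $B_\eta(\xi_2)$, and the complement, Taylor-expanding around the locally dominant bubble and using Proposition \ref{Propaprox}. The only difference is cosmetic: the paper splits $\Delta^2 V - f_\varepsilon(V)$ into $(\Delta^2 V - f_0(V)) + (f_0(V)-f_\varepsilon(V))$ and absorbs the projection error $PU_i - U_i$ into the first piece, whereas you telescope through the unprojected sum $U_1-U_2$ and isolate the projection error as a third term $A_3$; the resulting integrals and their scalings, including your explicit $\sigma = 1/2$ for $N\le 12$ versus $\sigma = 4/(N-4)$ for $N>12$, agree with the exponents the paper obtains in its $A_1,\dots,A_6$ bookkeeping.
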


\medskip
    
	\begin{proof}
		
		We begin by evaluating the error term $E_{(\xi^{0},\textbf{d}, \textbf{t})}$:
		\begin{equation*}
			\begin{aligned}
				\|E_{(\xi^{0},\textbf{d}, \textbf{t})}\| &= \| \Delta(a(x)\Delta V_{(\xi^{0},\textbf{d}, \textbf{t})}) - a(x)|V_{(\xi^{0},\textbf{d}, \textbf{t})}|^{p-2-\varepsilon}V_{(\xi^{0},\textbf{d}, \textbf{t})}\|\\
				&\leq C\left| \Delta(a(x)\Delta V_{(\xi^{0},\textbf{d}, \textbf{t})}) - a(x)|V_{(\xi^{0},\textbf{d}, \textbf{t})}|^{p-2-\varepsilon}V_{(\xi^{0},\textbf{d}, \textbf{t})}\right|_{\frac{2N}{N+4}} \\
				&\leq C \left| \Delta a\Delta V_{(\xi^{0},\textbf{d}, \textbf{t})}+2\nabla a \nabla \left(\Delta V_{(\xi^{0},\textbf{d}, \textbf{t})} \right)  + a(x) \Delta^{2} V_{(\xi^{0},\textbf{d}, \textbf{t})} - a(x) f_\varepsilon(V_{(\xi^{0},\textbf{d}, \textbf{t})}) \right|_{\frac{2N}{N+4}} \\
				&\leq O \left( \sum_i |\nabla a \Delta PU_i|_{\frac{2N}{N+4}} \right) +O\left( \sum_i |\nabla a \nabla(\Delta PU_i)|_{\frac{2N}{N+4}} \right) \\
				&+ O\left( \left|  a(x) \Delta^{2} V_{(\xi^{0},\textbf{d}, \textbf{t})} - a(x) f_\varepsilon(V_{(\xi^{0},\textbf{d}, \textbf{t})}) \right|_{\frac{2N}{N+4}} \right).
			\end{aligned}
		\end{equation*}

		\bigskip
		
		By Lemma \ref{tmij} (estimates \eqref{est3} and \eqref{est4}), and using that $\delta_i = O\left(\varepsilon^{\frac{n-1}{n-2}}\right)$ on compact subsets of $\Sigma$, we obtain:
		\begin{equation}
			\left|  \nabla_{x}\left( \Delta(PU_{\delta_{i},\xi_{i}})\right)\right| _{\frac{2N}{N+4}} =O\left(\delta_{i}^{\frac{N-4}{2(N-3)}+\beta}\right)=O\left(\left(\varepsilon^{\frac{N-3}{N-4}}\right)^{\frac{N-4}{2(N-3)}+\beta}\right),
		\end{equation}
		and
		\begin{equation}
			\left| \Delta(PU_{\delta_{i},\xi_{i}})\right| _{\frac{2N}{N+4}} =O\left(\delta^{\frac{N-4}{2(N-3)}+\beta}\right)=O\left(\left(\varepsilon^{\frac{N-3}{N-4}}\right)^{\frac{N-4}{2(N-3)}+\beta}\right).
		\end{equation}
		Therefore:
		\begin{equation*}
			\left|\nabla a \Delta PU_i\right|_{\frac{2N}{N+4}}=O\left( \varepsilon^{\frac{1}{2} + \sigma} \right) \mbox{ and } |\nabla a \nabla(\Delta PU_i)|_{\frac{2N}{N+4}}=O\left( \varepsilon^{\frac{1}{2} + \sigma} \right),
		\end{equation*}
		for some $\sigma>0$. In the remainder of the proof, we will estimate the remaining term:
		\begin{equation*}
			\begin{aligned}
				\left|  a(x) \Delta^{2} V_{(\xi^{0},\textbf{d}, \textbf{t})} - a(x) f_\varepsilon(V_{(\xi^{0},\textbf{d}, \textbf{t})}) \right|_{\frac{2N}{N+4}} &\leq 	\left|  a(x) \Delta^{2} V_{(\xi^{0},\textbf{d}, \textbf{t})} - a(x) f_{0}(V_{(\xi^{0},\textbf{d}, \textbf{t})}) \right|_{\frac{2N}{N+4}}\\
				&+\left| a(x) f_{0}(V_{(\xi^{0},\textbf{d}, \textbf{t})})  - a(x) f_\varepsilon(V_{(\xi^{0},\textbf{d}, \textbf{t})}) \right|_{\frac{2N}{N+4}}\\
				&=:B_{1}+B_{2}.\\
			\end{aligned}
		\end{equation*}	
		First, we will focus on the term $B_{1}$. Indeed, 
		\begin{equation*}
			\begin{aligned}
				&\left|   a(x) \Delta^{2} V_{(\xi^{0},\textbf{d}, \textbf{t})} - a(x) f_{0}(V_{(\xi^{0},\textbf{d}, \textbf{t})})  \right|_{\frac{2N}{N+4}}=\left|  a(x) \left(U_{1}^{p-1}-U_{2}^{p-1} \right)  - a(x) (PU_{1}-PU_{2})^{p-1}\right|_{\frac{2N}{N+4}}\\
				&=O\left( \left|     \left(U_{1}^{p-1}-U_{2}^{p-1} \right)  -   (PU_{1}-PU_{2})^{p-1}  \right|_{\Omega\smallsetminus  (B_{\eta}(\xi_{1})\cup B_{\eta}(\xi_{2})), \frac{2N}{N+4}}\right) \\
				&+\sum_{i=1}^{2}O\left(\left|     \left(U_{1}^{p-1}-U_{2}^{p-1} \right)  -   (PU_{1}-PU_{2})^{p-1}  \right|_{B_{\eta}(\xi_{i}), \frac{2N}{N+4}} \right).
			\end{aligned}
		\end{equation*}	
		The estimate in the set $\Omega\smallsetminus  (B_{\eta}(\xi_{1})\cup B_{\eta}(\xi_{2}))$ is obtained by:
		\begin{equation*}
			\begin{aligned}
				&\left|     \left(U_{1}^{p-1}-U_{2}^{p-1} \right)  -   (PU_{1}-PU_{2})^{p-1} \right|_{\Omega\smallsetminus  (B_{\eta}(\xi_{1})\cup B_{\eta}(\xi_{2})), \frac{2N}{N+4}} \\
				&\leq \left|  U_{1}^{p-1} \right|_{\Omega\smallsetminus  (B_{\eta}(\xi_{1})), \frac{2N}{N+4}}+ \left|  U_{2}^{p-1} \right|_{\Omega\smallsetminus  (B_{\eta}(\xi_{2})), \frac{2N}{N+4}}\\
				&=O\left( \left( \frac{\delta_{1}}{\eta}\right)^{N} \right)+O\left( \left( \frac{\delta_{2}}{\eta}\right)^{N} \right),
			\end{aligned}
		\end{equation*}	
		because 
		\begin{equation}\label{ecuu2}
			\begin{aligned}
				\left|  U_{i}^{p-1} \right|_{\Omega\smallsetminus  (B_{\eta}(\xi_{i})), \frac{2N}{N+4}} &=\int_{\Omega\smallsetminus  (B_{\eta}(\xi_{i})) }	\frac{\delta_{i}^{N}}{(\delta_{i}^{2}+|x-\xi_{i}|^{2})^{N} }\,dx\\
				&\leq \int_{\{ |y|>\frac{\eta}{\delta_{i}}\}} \frac{1}{(1+|y|^{2})^{N} }\,dy\\
				&=O\left( \left( \frac{\delta_{i}}{\eta}\right)^{N} \right).
			\end{aligned}
		\end{equation}
		
		\bigskip
		
		The other terms that form $B_{1}$ are addressed as follows:
		\begin{equation*}
			\begin{aligned}
				&\left|    \left(U_{1}^{p-1}-U_{2}^{p-1} \right)  -   (PU_{1}-PU_{2})^{p-1}  \right|_{B_{\eta}(\xi_{1}), \frac{2N}{N+4}}\\
				\leq&  \left| (PU_{1}-PU_{2})^{p-1}-  a(x)  U_{1}^{p-1} \right|_{B_{\eta}(\xi_{1}), \frac{2N}{N+4}}+\left|  U_{2}^{p-1} \right|_{B_{\eta}(\xi_{1}), \frac{2N}{N+4}}\\
				=&  \left|  (U_{1}+PU_{1}-U_{1}-PU_{2})^{p-1}-  a(x)  U_{1}^{p-1} \right|_{B_{\eta}(\xi_{1}), \frac{2N}{N+4}}+\left|  U_{2}^{p-1} \right|_{B_{\eta}(\xi_{1}), \frac{2N}{N+4}}\\
				=&  \left|   (p-1)(U_{1}+\theta(PU_{1}-U_{1}-PU_{2}))^{p-2}(PU_{1}-U_{1}-PU_{2}) \right|_{B_{\eta}(\xi_{1}), \frac{2N}{N+4}}+\left|  U_{2}^{p-1} \right|_{B_{\eta}(\xi_{1}), \frac{2N}{N+4}}\\
				= & O\left(  \left|  U_{1}^{p-2}(PU_{1}-U_{1})  \right|_{B_{\eta}(\xi_{1}), \frac{2N}{N+4}}\right) +O\left( \left|U_{1}^{p-2}PU_{2} \right|_{B_{\eta}(\xi_{1}), \frac{2N}{N+4}}\right) \\
				+ & O\left(  \left| (PU_{1}-U_{1})^{p-1} \right|_{B_{\eta}(\xi_{1}), \frac{2N}{N+4}}\right) +O\left(\left|(PU_{1}-U_{1})^{p-2}PU_{2} \right|_{B_{\eta}(\xi_{1}), \frac{2N}{N+4}} \right) \\
				+ &  O\left( \left| PU_{2}^{p-2}(PU_{1}-U_{1}) \right|_{B_{\eta}(\xi_{1}), \frac{2N}{N+4}}\right) +O\left( \left| U_{2}^{p-1} \right|_{B_{\eta}(\xi_{1}), \frac{2N}{N+4}}\right) \\
				=:&A_{1}+A_{2}+A_{3}+A_{4}+A_{5}+A_{6}
			\end{aligned}
		\end{equation*}	
		We will show that every term $A_{1},\dots,A_{6} $  is of the order \( O\left( \varepsilon^{\frac{1}{2} + \sigma} \right) \). The calculations are based on the following basic estimates:
		\begin{equation}\label{CRes}
			\begin{aligned}
				\left|  U_{1}^{(p-2)} \right|_{\frac{2N}{N+4}r}^{\frac{2N}{N+4}r} &=\int_{B_{\eta}(\xi_{1}) }	 U_{1}^{(p-2)\left(\frac{2N}{N+4}r\right)}\,dx\\
				&=\int_{B_{\eta}(\xi_{1}) }	\frac{\delta_{1}^{\frac{8Nr}{N+4}}}{\left(\delta_{1}^{2}+|x-\xi_{1}|^{2}\right)^{\frac{8Nr}{N+4}} }\,dx\\
				&=\delta_{1}^{N-\frac{8Nr}{N+4}}\int_{B_{\frac{\eta}{\delta_{1}}}(0) }	\frac{1}{\left(1+|y|^{2}\right)^{\frac{8Nr}{N+4}} }\,dy\\
				&= \left\{ \begin{array}{lcc} \delta_{1}^{N-\frac{8Nr}{N+4}}\left(\frac{\eta}{\delta_{1}} \right)^{N-\frac{16Nr}{N+4}}  & if &N>12 \mbox{ and } 1<r < \frac{N+4}{16} \\ \\ \delta_{1}^{N-\frac{8Nr}{N+4}}  & if & N\leq 12 \mbox{ and } r > \frac{N+4}{16} \end{array} \right.
			\end{aligned}
		\end{equation}
		i.e.
		\begin{equation}\label{U1}
			\begin{aligned}
				\left|  U_{1}^{(p-2)} \right|_{\frac{2N}{N+4}r} =& \left\{ \begin{array}{lcc} \delta_{1}^{4}\eta ^{\frac{N+4}{2r}-8}  & if &N>12 \mbox{ and } 1<r < \frac{N+4}{16} \\ \\ \delta_{1}^{\frac{N+4}{2r}-4}  & if & N\leq 12 \mbox{ and } r > \frac{N+4}{16} \end{array} \right.
			\end{aligned}
		\end{equation}
		
		\bigskip
		
		Additionally, as $\left|\frac{2\tau_{1}\nu(\xi^{0})}{\eta}\right|>1$ there exists $k>0$ such that:
		\begin{equation}\label{PU1}
			\begin{aligned}
				\left|\frac{1}{|x-\widetilde{\xi_{i}}|^{N-4}}\right|_{{\frac{2N r}{(N+4)(r-1)}}}	=&\left(\int_{B_{\eta}(\xi_{i})}\frac{1}{\left( |x-\widetilde{\xi_{i}}|^{N-4}\right)^{\frac{2N r}{(N+4)(r-1)}}} \,dx\right)^{\frac{(N+4)(r-1)}{2Nr}} \\
				=& \left( 	\int_{B_{1}\left(\frac{\xi_{i}-\widetilde{\xi_{i}}}{\eta}\right)}\frac{\eta^{N}}{\left( |\eta y|^{N-4}\right)^{\frac{2N r}{(N+4)(r-1)}}}\,dy\right)^{\frac{(N+4)(r-1)}{2Nr}}\\
				=&\left( \eta^{N-\frac{2(N-4)N r}{(N+4)(r-1)}}\int_{B_{1}\left(\frac{2\tau_{1}\nu(\xi^{0})}{\eta}\right)}\frac{1}{| y|^{\frac{2(N-4)N r}{(N+4)(r-1)}}} \,dy\right)^{\frac{(N+4)(r-1)}{2Nr}} \\
				=&\eta^{\frac{(N+4)(r-1)}{2r}-(N-4)}\left( \int_{|y|>k}\frac{1}{| y|^{\frac{2(N-4)N r}{(N+4)(r-1)}}} \,dy\right)^{\frac{(N+4)(r-1)}{2Nr}} \\
				=&O\left(\eta^{-\frac{N-12}{2}-\frac{N+4}{2r}}\right),
			\end{aligned}
		\end{equation}
		choosing \( 1 < r < \frac{N+4}{12 - N} \) if \( 12 > N \) and \( r > 1 \) if \( 12 \leq N \).  A similar argument yields to: 
		\begin{equation}\label{U2}
			\begin{aligned}
				\left|U_{2} \right|_{B_{\eta}(\xi_{1}),{\frac{2N r}{(N+4)(r-1)}}}\leq& \delta_{2}^{\frac{N-4}{2}}\left(\int_{B_{\eta}(\xi_{1})}\frac{1}{\left( |x-\xi_{2}|^{N-4}\right)^{\frac{2N r}{(N+4)(r-1)}}}\,dx \right)^{\frac{(N+4)(r-1)}{2Nr}} \\
				\leq&  \delta_{2}^{\frac{N-4}{2}}\left( 	\int_{B_{1}\left(\frac{\xi_{1}-\xi_{2}}{\eta}\right)}\frac{\eta^{N}}{\left( |\eta y|^{N-4}\right)^{\frac{2N r}{(N+4)(r-1)}}}\,dy\right)^{\frac{(N+4)(r-1)}{2Nr}}\\
				=&\delta_{2}^{\frac{N-4}{2}}O\left(\eta^{-\frac{N-12}{2}-\frac{N+4}{2r}}\right),
			\end{aligned}
		\end{equation}
		when choosing, as before, \( 1 < r < \frac{N+4}{12 - N} \) if \( 12 > N \) and \( r > 1 \) if \( 12 \leq N \).
		
		\bigskip

		The estimate of \( A_{1} \) is obtained using \eqref{equa2}, along with \eqref{U1} and \eqref{PU1},  choosing $r>1$ such that \( r \sim 1 \) when \( N \leq 12 \) or \( r \sim \frac{N + 4}{16} \) when \( N > 12 \). Indeed,

		\begin{equation*}
			\begin{aligned}
				A_{1}	=&  \left|  U_{1}^{p-2}\left(PU_{1}-U_{1}\right)  \right|_{B_{\eta}(\xi_{1}), \frac{2N}{N+4}}=		\left(\int_{B_{\eta}(\xi_{1}) }	\left| U_{1}^{p-2}(PU_{1}-U_{1}) \right|^{\frac{2N}{N+4}}\,dx\right) ^{\frac{N+4}{2N}}\\
				\leq& \left|  U_{1}^{(p-2)} \right|_{\frac{2N}{N+4}r}\left|PU_{1}-U_{1}\right|_{{\frac{2N r}{(N+4)(r-1)}}}\\
				\leq& \delta_{1}^{\frac{N-4}{2}}\left|  U_{1}^{(p-2)} \right|_{\frac{2N}{N+4}r}\left|\frac{1}{|x-\widetilde{\xi_{i}}|^{N-4}}\right|_{{\frac{2N r}{(N+4)(r-1)}}}\\
				=& \left\{ \begin{array}{lcc}  O\left( \left( \frac{\delta_{1}}{\eta}\right) ^{\frac{N+4}{2}-\sigma}\right)    & if &N>12 \\ \\ O\left( \left( \frac{\delta_{1}}{\eta}\right) ^{N-4-\sigma} \right)  & if & N\leq 12\end{array} \right.
			\end{aligned}
		\end{equation*}	
		Here, $\sigma$ is a positive small constant, with $\sigma \sim (r-1)$  for \( N \leq 12 \) and \(\sigma  \sim  \left(r-\frac{N + 4}{16}\right) \) for \( N > 12 \). 
		
		The term \( A_{2} \) is estimated similarly, this time using the equation \eqref{equa1} along with equations \eqref{U1} and \eqref{U2} and  choosing $r>1$ such that \( r \sim 1 \) when \( N \leq 12 \) or \( r \sim \frac{N + 4}{16} \) when \( N > 12 \). Indeed, 
		\begin{equation*}
			\begin{aligned}
				A_{2}\leq & 	\left|U_{1}^{p-2}PU_{2} \right|_{B_{\eta}(\xi_{1}), \frac{2N}{N+4}}\leq c\left|U_{1}^{p-2}U_{2} \right|_{B_{\eta}(\xi_{1}), \frac{2N}{N+4}} \\
				\leq& c\left|  U_{1}^{(p-2)} \right|_{\frac{2N}{N+4}r}\left|U_{2}\right|_{{\frac{2N r}{(N+4)(r-1)}}}\\
				=& \left\{ \begin{array}{lcc}  \left( \frac{\delta_{1}}{\eta}\right) ^{4-\sigma}\left( \frac{\delta_{2}}{\eta}\right) ^{\frac{N-4}{2}-\sigma}   & if &N>12 \\ \\ \left( \frac{\delta_{1}}{\eta}\right) ^{\frac{N-4-\sigma}{2}} \left( \frac{\delta_{2}}{\eta}\right) ^{\frac{N-4-\sigma}{2}}  & if & N\leq 12\end{array} \right.
			\end{aligned}
		\end{equation*}
		Estimating \( A_{3} \) follows a similar process, using equation \eqref{equa2} of Proposition \ref{equa1} to derive:
		\begin{equation*}
			\begin{aligned}
				A_{3}=	\left| (PU_{1}-U_{1})^{p-1} \right|_{B_{\eta}(\xi_{1}), \frac{2N}{N+4}}&\leq \left|\left(\frac{\delta_{1}^{\frac{N-4}{2}}}{|x-\widetilde{\xi}_{1}|^{N-4}} \right) ^{p-1} \right|_{B_{\eta}(\xi_{1}), \frac{2N}{N+4}}\\
				&\leq \delta_{1}^{\frac{N+4}{2}}  \left(\int_{B_{\eta}(\xi_{1})} \frac{1}{|x-\widetilde{\xi}_{1}|^{2N}}  \,dx\right)^{\frac{N+4}{2N}}\\
				&\leq \delta_{1}^{\frac{N+4}{2}}  \left(	\int_{B_{1}(\frac{\xi_{1}-\widetilde{\xi_{1}}}{\eta})}\frac{\eta^{N}}{ |\eta y|^{2N} } \,dy \right)^{\frac{N+4}{2N}}\\
				&\leq O\left( \delta_{1}^{\frac{N+4}{2}}\eta^{-\frac{N+4}{2}}\right). \\
			\end{aligned}
		\end{equation*}	
		We handle \( A_{4} \) as follows:
		\begin{equation*}
			\begin{aligned}
				A_{4}	\leq &  \left|  (PU_{1}-U_{1})^{p-2}U_{2}  \right|_{B_{\eta}(\xi_{1}), \frac{2N}{N+4}}\\
				\leq& \left|  (PU_{1}-U_{1})^{(p-2)} \right|_{\frac{2N}{N+4}r}\left|U_{2}\right|_{B_{\eta}(\xi_{1}),{\frac{2N r}{(N+4)(r-1)}}}\\
				=& \left\{ \begin{array}{lcc}  O\left( \left( \frac{\delta_{1}}{\eta}\right) ^{\frac{N+4}{2}-\sigma}\right)    & if &N>12 \\ \\ O\left( \left( \frac{\delta_{1}}{\eta}\right) ^{N-4-\sigma} \right)  & if & 5\leq N\leq 12\end{array} \right.
			\end{aligned}
		\end{equation*}	
		Finally, term \( A_{5} \) is similarly estimated as \( A_{1} \), whereas \( A_{6} \) is handled like \eqref{ecuu2}. This concludes the estimation of \( B_{1} \).

		\bigskip
		
		Arguing in the same way as in \cite[Proposition 2]{rey1991blow}, we can estimate the last term \( B_2 \) as
		\[
		\left| a(x) f_{0}(V_{(\xi^{0},\textbf{d}, \textbf{t})})  - a(x) f_\varepsilon(V_{(\xi^{0},\textbf{d}, \textbf{t})}) \right|_{\frac{2N}{N+4}}= O\left( \varepsilon |\ln \varepsilon| \right).
		\]
		
	\end{proof}

\bigskip\bigskip
	

\section{Expansion  of the reduced energy functional}\label{sect:Reduced-Energy}

In this section, we estimate  the nonlinear  term  of the reduced energy functional.  We will focus on the estimations in the case \eqref{typ2} as the expansion of the nonlinear part in case \eqref{typ1} is simpler and can be directly derived from the former. 
So, let $(\zeta^{0}, \textbf{d}, \textbf{t})\in \Sigma_{2}$ and, as before, let \begin{equation*}
	\eta=\min\left\{d(\xi_{1}, \partial\Omega), d(\xi_{2}, \partial\Omega), \dfrac{|\xi_{1}-\xi_{2}|}{2} \right\}=\min\left\{t_{1}\varepsilon,t_{2}\varepsilon,\dfrac{\varepsilon|t_{1}-t_{2}|}{2} \right\}
\end{equation*}
\medskip
    
	\begin{lemma}\label{nonlinearij}
		The following estimate holds true:
		\begin{equation}
			\begin{aligned}
				&\frac{1}{p-\varepsilon} \int_{\Omega} a(x)|PU_{\delta_{1},\xi_{1}}-PU_{\delta_{2},\xi_{2}}|^{p-\varepsilon}dx = \frac{1}{p} \int_{\Omega} a(x) |	V_{(\zeta^{0},\textbf{d}, \textbf{t})}
				|^p \, dx \\
				&+ \varepsilon \left[ \frac{1}{p^2} \int_{\Omega} a(x) |	V_{(\zeta^{0},\textbf{d}, \textbf{t})}
				|^{p} dx - \frac{1}{p} \int_{\Omega} a(x) |	V_{(\zeta^{0},\textbf{d}, \textbf{t})}
				|^{p-1} \log |	V_{(\zeta^{0},\textbf{d}, \textbf{t})}
				| \, dx \right] \\
				&  + o(\varepsilon) \\
				=& 2a(\zeta^{0})\frac{\gamma_{1}}{p}+\varepsilon2a(\zeta^{0})\frac{\gamma_{1}}{p^{2}}-\varepsilon2a(\zeta^{0})\frac{\gamma_{3}}{p}+\varepsilon\log (\varepsilon)2\frac{N-3}{2}a(\zeta^{0})\frac{\gamma_{1}}{p} \\
				+& \varepsilon \nabla a(\zeta^{0})\cdot\nu(\zeta^{0})(t_{1}+t_{2})\frac{\gamma_{1}}{p}-\varepsilon a(\zeta^{0})\left( \left( \frac{d_{1}}{2t_{1}}\right) ^{N-4}+\left( \frac{d_{1}}{2t_{1}}\right) ^{N-4}\right)\gamma_{2} \\
				-&\varepsilon2 a(\zeta^{0})(d_{1}d_{2})^{\frac{N-4}{2}}\left(\frac{1}{|t_{1}-t_{2}|^{N-4}}-\frac{1}{|t_{1}+t_{2}|^{N-4}} \right) \gamma_{2}\\
				+&\varepsilon\frac{N-4}{2}a(\zeta^{0})\log (d_{1}+d_{2})\frac{\gamma_{1}}{p} +o(\varepsilon).
			\end{aligned}
		\end{equation}
		
	\end{lemma}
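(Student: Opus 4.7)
The plan is to first isolate the $\varepsilon$-dependence coming from the exponent and then localize each resulting integral near the two concentration points. Writing $V=V_{(\zeta^{0},\mathbf{d},\mathbf{t})}=PU_{\delta_{1},\xi_{1}}-PU_{\delta_{2},\xi_{2}}$, I would begin with the pointwise identity
\begin{equation*}
|V|^{p-\varepsilon}=|V|^{p}e^{-\varepsilon\log|V|}=|V|^{p}-\varepsilon|V|^{p}\log|V|+O\!\left(\varepsilon^{2}|V|^{p}\log^{2}|V|\right),
\end{equation*}
combined with $\frac{1}{p-\varepsilon}=\frac{1}{p}+\frac{\varepsilon}{p^{2}}+O(\varepsilon^{2})$, to obtain the first displayed formula of the lemma. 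The problem then reduces to expanding $\int_{\Omega}a(x)|V|^{p}\,dx$ up to order $\varepsilon$ and $\int_{\Omega}a(x)|V|^{p-1}\log|V|\,dx$ up to order $1$. The cubic remainder is controlled via $\log|V|=O(|\log\varepsilon|)$ on the regions where $|V|$ is not negligible, which is sufficient to absorb it into $o(\varepsilon)$.

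For the $L^{p}$ term I would split $\Omega=B_{\eta}(\xi_{1})\cup B_{\eta}(\xi_{2})\cup\bigl(\Omega\setminus(B_{\eta}(\xi_{1})\cup B_{\eta}(\xi_{2}))\bigr)$. The exterior contribution is $O((\delta_{i}/\eta)^{N})=o(\varepsilon)$ by the computation in \eqref{ecuu2}. On $B_{\eta}(\xi_{1})$ the bubble $PU_{\delta_{2},\xi_{2}}$ is subdominant, so I expand
\begin{equation*}
|PU_{1}-PU_{2}|^{p}=PU_{1}^{p}-p\,PU_{1}^{p-1}PU_{2}+O\!\left(PU_{1}^{p-2}PU_{2}^{2}\right),
\end{equation*}
and further $PU_{1}^{p}=U_{1}^{p}+p\,U_{1}^{p-1}(PU_{1}-U_{1})+\text{h.o.t.}$ Proposition \ref{tmi} then delivers the contribution $a(\zeta^{0})\gamma_{1}+\varepsilon t_{1}\nabla a(\zeta^{0})\cdot\nu(\zeta^{0})\gamma_{1}$ from the first piece and $-\varepsilon(d_{1}/2t_{1})^{N-4}a(\zeta^{0})\gamma_{2}$ from the second. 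The cross term $\int_{B_{\eta}(\xi_{1})}a(x)U_{1}^{p-1}PU_{2}\,dx$ is (up to $o(\varepsilon)$) the integral already expanded in Proposition \ref{propij} under the coalescing condition $\xi_{1}^{0}=\xi_{2}^{0}=\zeta^{0}$, which supplies the $(d_{1}d_{2})^{(N-4)/2}\bigl(|t_{1}-t_{2}|^{-(N-4)}-|t_{1}+t_{2}|^{-(N-4)}\bigr)\gamma_{2}$ contribution. The same analysis on $B_{\eta}(\xi_{2})$ doubles these terms, which accounts for the overall factor $2$ appearing in the statement.

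For the logarithmic integral, on $B_{\eta}(\xi_{i})$ I write $|V|=U_{i}\bigl(1+(V-U_{i})/U_{i}\bigr)$, so that $\log|V|=\log U_{i}+O((V-U_{i})/U_{i})$. Since $U_{i}(x)=\alpha_{N}(\delta_{i}/(\delta_{i}^{2}+|x-\xi_{i}|^{2}))^{(N-4)/2}$, the rescaling $x=\xi_{i}+\delta_{i}y$ yields $\log U_{i}=-\tfrac{N-4}{2}\log\delta_{i}+\log\alpha_{N}-\tfrac{N-4}{2}\log(1+|y|^{2})$. Plugging $\delta_{i}=d_{i}\varepsilon^{(N-3)/(N-4)}$ gives $-\tfrac{N-4}{2}\log\delta_{i}=-\tfrac{N-3}{2}\log\varepsilon-\tfrac{N-4}{2}\log d_{i}$, which is precisely the source of the $\varepsilon\log\varepsilon$ coefficient $(N-3)a(\zeta^{0})\gamma_{1}/p$ and of the $\log d_{i}$ terms appearing in the final display. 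The scale-invariant piece $-\tfrac{N-4}{2}\int_{\mathbb{R}^{N}}(1+|y|^{2})^{-N}\log(1+|y|^{2})\,dy$ is absorbed into the constant $\gamma_{3}$, producing the $-2a(\zeta^{0})\gamma_{3}/p$ correction. Summing the contributions from both bubbles and collecting terms yields the claimed formula.

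The main obstacle is bookkeeping: three distinct scales coexist, namely the $O(1)$ leading Sobolev mass $2a(\zeta^{0})\gamma_{1}/p$, the $O(\varepsilon|\log\varepsilon|)$ logarithmic contribution coming from $\log\delta_{i}$, and the $O(\varepsilon)$ correction that is itself assembled from three independent sources (the Taylor expansion of $a(x)$ at the boundary point, the defect $PU_{i}-U_{i}$ measured by the regular part of the Green's function, and the cross interaction of the two bubbles concentrating at the same boundary point). Keeping these pieces aligned, while verifying that the remainders produced by the Taylor expansions in $PU_{2}/PU_{1}$, in $(V-U_{i})/U_{i}$, and in $\varepsilon\log|V|$ are genuinely $o(\varepsilon)$ on compact subsets of $\Sigma_{2}$, requires a careful but mechanical use of the pointwise bounds on $PU_{i}-U_{i}$ from Proposition \ref{Propaprox} and the Green-function estimates of Section \ref{sect:Green-function}. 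The role of the symmetry assumption $(p2)$ is to guarantee that the two bubbles genuinely coalesce along the normal direction, so that the cross term is captured by \eqref{isj} rather than by an asymmetric expansion that would have otherwise forced higher-order reduced-energy computations.
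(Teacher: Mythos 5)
Your proposal is correct and follows essentially the same route as the paper: Taylor-expand $\frac{1}{p-\varepsilon}$ and $|V|^{-\varepsilon}$ in $\varepsilon$ (as in the paper's Lemma \ref{nonlinearij}), then expand $\int a|V|^p$ and $\int a|V|^p\log|V|$ by splitting over $B_\eta(\xi_1)$, $B_\eta(\xi_2)$, and the exterior and invoking Propositions \ref{tmi}, \ref{propij} and \ref{Propaprox}, which is exactly what the paper does in Lemmas \ref{LemaP1} and \ref{LemaP2}. (Your slip $|V|^{p-1}\log|V|$ in place of $|V|^{p}\log|V|$ merely mirrors a typo already present in the paper's statement; your own algebra shows the correct power.)
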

	
	\begin{proof}
		\bigskip
		
		The proof of the Lemma follows directly from estimates:
		\begin{equation*}
			\begin{aligned}
				\frac{1}{p}\int_{\Omega}a(x)|PU_{\delta_{1},\xi_{1}}-PU_{\delta_{2},\xi_{2}}|^{p}\,dx=&2a(\zeta^{0})\frac{\gamma_{1}}{p}+\varepsilon(t_{1}+t_{2}) \nabla a(\zeta^{0})\cdot\nu(\zeta^{0})\frac{\gamma_{1}}{p}\\
				-&a(\zeta^{0})\varepsilon\left( \left( \frac{d_{1}}{2t_{1}}\right) ^{N-4}+\left( \frac{d_{1}}{2t_{1}}\right) ^{N-4}\right)\gamma_{2} \\
				-&2 a(\zeta^{0})\varepsilon(d_{1}d_{2})^{\frac{N-4}{2}}\left(\frac{1}{|t_{1}-t_{2}|^{N-4}}-\frac{1}{|t_{1}+t_{2}|^{N-4}} \right) \gamma_{2}+ o(\varepsilon),\\
				\frac{\varepsilon}{p^{2}}	\int_{\Omega}a(x)|PU_{\delta_{1},\xi_{1}}-PU_{\delta_{2},\xi_{2}}|^{p}\,dx=&\varepsilon2a(\zeta^{0})\frac{\gamma_{1}}{p^{2}}+o(\varepsilon),
			\end{aligned}
		\end{equation*}
		
		\medskip
		
		and
		\begin{equation*}
			\begin{aligned}
				-\frac{\varepsilon}{p}\int_{\Omega}& a(x)\left|PU_{\delta_{1},\xi_{1}}-PU_{\delta_{2},\xi_{2}}\right|^{p}\log\left(\left|PU_{\delta_{1},\xi_{1}}-PU_{\delta_{2},\xi_{2}}\right|\right)\,dx\\
				=&-\varepsilon2a(\zeta^{0})\frac{\gamma_{3}}{p}+\varepsilon\frac{N-4}{2}a(\zeta^{0})\log (d_{1}+d_{2})\frac{\gamma_{1}}{p}+\varepsilon\log (\varepsilon)(N-3)a(\zeta^{0})\frac{\gamma_{1}}{p} +o(\varepsilon),
			\end{aligned}
		\end{equation*}
		
		which are derived from Lemmas \ref{LemaP1} and \ref{LemaP2}.
	\end{proof}

	\bigskip\bigskip

	\begin{lemma}\label{LemaP1}
		The following estimate holds true:
		\begin{equation}
			\begin{aligned}
				\int_{\Omega}a(x)|PU_{\delta_{1},\xi_{1}}-PU_{\delta_{2},\xi_{2}}|^{p}\,dx=&2a(\zeta^{0})\gamma_{1}+\varepsilon(t_{1}+t_{2}) \nabla a(\zeta^{0})\cdot\nu(\zeta^{0})\gamma_{1}\\
				-&pa(\zeta^{0})\varepsilon\left( \left( \frac{d_{1}}{2t_{1}}\right) ^{N-4}+\left( \frac{d_{1}}{2t_{1}}\right) ^{N-4}\right)\gamma_{2} \\
				-&2p a(\zeta^{0})\varepsilon(d_{1}d_{2})^{\frac{N-4}{2}}\left(\frac{1}{|t_{1}-t_{2}|^{N-4}}-\frac{1}{|t_{1}+t_{2}|^{N-4}} \right) \gamma_{2}+ o(\varepsilon).
			\end{aligned}
		\end{equation}
		
	\end{lemma}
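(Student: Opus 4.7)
The plan is to split the integral over $\Omega$ into contributions from the two inner balls $B_\eta(\xi_1)$, $B_\eta(\xi_2)$, and the exterior $\Omega\setminus\bigl(B_\eta(\xi_1)\cup B_\eta(\xi_2)\bigr)$. On the exterior one has $0\leq PU_{\delta_i,\xi_i}\leq U_{\delta_i,\xi_i}=O((\delta_i/\eta)^{(N-4)/2})$, so the integrand is pointwise of order $(\delta_i/\eta)^{N}$ on each tail, which yields an $o(\varepsilon)$ contribution by a computation parallel to \eqref{ecuu2}. Since $\eta$ is chosen so that the two inner balls are disjoint, on each of them exactly one bubble dominates: on $B_\eta(\xi_1)$ one has $PU_{\delta_1,\xi_1}>PU_{\delta_2,\xi_2}$, and symmetrically on $B_\eta(\xi_2)$.

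On $B_\eta(\xi_1)$ I would then use the Taylor expansion
\begin{equation*}
|PU_1-PU_2|^{p}=PU_1^{p}-p\,PU_1^{p-1}PU_2+O\!\left(PU_1^{p-2}PU_2^{2}\right),
\end{equation*}
the quadratic remainder being $o(\varepsilon)$ after integration, by arguments parallel to the estimation of the terms $A_1,\dots,A_6$ in the proof of Lemma \ref{error}. For the first leading term I would use the decomposition $PU_i=U_i-\alpha_N\delta_i^{(N-4)/2}H(\cdot,\xi_i)+R_{\delta_i,\xi_i}$ from Proposition \ref{Propaprox}, together with a further Taylor expansion, namely $PU_1^{p}=U_1^{p}+p\,U_1^{p-1}(PU_1-U_1)+O(U_1^{p-2}(PU_1-U_1)^{2})$. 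Proposition \ref{tmi} applied to $\int_{B_\eta(\xi_1)}a(x)U_1^{p}\,dx$ yields $a(\zeta^{0})\gamma_1+\varepsilon t_1\nabla a(\zeta^0)\cdot\nu(\zeta^0)\gamma_1+o(\varepsilon)$, and its second identity, applied to $p\int a(x)U_1^{(N+4)/(N-4)}(PU_1-U_1)\,dx$, contributes $-p\,\varepsilon(d_1/2t_1)^{N-4}a(\zeta^0)\gamma_2+o(\varepsilon)$. For the cross term $-p\int_{B_\eta(\xi_1)}a(x)PU_1^{p-1}PU_2\,dx$, I would replace $PU_1^{p-1}$ by $U_1^{(N+4)/(N-4)}$ up to an $o(\varepsilon)$ error and invoke Proposition \ref{propij}, formula \eqref{isj} (applicable since $\xi_1^0=\xi_2^0=\zeta^0$), obtaining $-p\,a(\zeta^0)\,\varepsilon\,(d_1d_2)^{(N-4)/2}\bigl(|t_1-t_2|^{-(N-4)}-|t_1+t_2|^{-(N-4)}\bigr)\gamma_2+o(\varepsilon)$.

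The treatment on $B_\eta(\xi_2)$ is completely symmetric and produces the companion self-interaction contribution $-p\,\varepsilon(d_2/2t_2)^{N-4}a(\zeta^0)\gamma_2$, the Taylor term $a(\zeta^0)\gamma_1+\varepsilon t_2\nabla a(\zeta^0)\cdot\nu(\zeta^0)\gamma_1$, and a second cross term identical to the first. Summing all contributions recovers the claimed expansion. The main obstacle will be verifying that the quadratic Taylor remainders $\int a\,PU_1^{p-2}PU_2^{2}$ and $\int a\,U_1^{p-2}(PU_1-U_1)^{2}$ are genuinely $o(\varepsilon)$; the argument relies on H\"older's inequality combined with the pointwise bound $U_i-PU_i\leq c\,\delta_i^{(N-4)/2}/|x-\widetilde{\xi}_i|^{N-4}$ from \eqref{equa2} and the disjointness $|\xi_1-\xi_2|\geq 2\eta$, exactly as in the error analysis carried out in Lemma \ref{error}.
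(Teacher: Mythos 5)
Your proposal is correct and follows essentially the same route as the paper's proof: split $\Omega$ into $B_\eta(\xi_1)$, $B_\eta(\xi_2)$ and the exterior, Taylor expand on each inner ball, and evaluate the leading terms by Propositions \ref{tmi} and \ref{propij}, with the quadratic remainders dismissed by the same type of H\"older/pointwise estimates used for $A_1,\dots,A_6$ in Lemma \ref{error}. The only (cosmetic) difference is that the paper first subtracts $U_1^p+U_2^p$ globally and then expands $|PU_1-PU_2|^p-U_1^p-U_2^p$ around $U_1$ on $B_\eta(\xi_1)$, whereas you expand first around $PU_1$ and then re-expand $PU_1^p$ around $U_1^p$; both give the same decomposition into the two one-bubble integrals, the self-interaction term $p\int U_1^{p-1}(PU_1-U_1)$, and the cross term $-p\int U_1^{p-1}PU_2$. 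One small caveat: your claim that $PU_1>PU_2$ pointwise on all of $B_\eta(\xi_1)$ is not needed and need not hold near $\partial B_\eta(\xi_1)$; the Taylor expansion with Lagrange remainder (as in the paper, with the factor $|U_1+\Theta_x(\cdot)|^{p-2}$) works without dropping the absolute value.
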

	\begin{proof}
		We write:
		\begin{equation*}
			\begin{aligned}
				\int_{\Omega}a(x)|PU_{\delta_{1},\xi_{1}}-PU_{\delta_{2},\xi_{2}}|^{p}\,dx&=\int_{\Omega}a(x)\left( |PU_{\delta_{1},\xi_{1}}-PU_{\delta_{2},\xi_{2}}|^{p}-U_{\delta_{1},\xi_{1}}^{p}-U_{\delta_{2},\xi_{2}}^{p}\right)\,dx \\
				+&\int_{\Omega}a(x)U_{\delta_{1},\xi_{1}}^{p}\,dx+\int_{\Omega}a(x)U_{\delta_{2},\xi_{2}}^{p}\,dx.
			\end{aligned}
		\end{equation*}
		
		\bigskip
		
		From estimation \eqref{tmi1}, it can be directly deduced:
		\begin{equation*}
			\begin{aligned}
				\int_{\Omega}a(x)U_{\delta_{1},\xi_{1}}^{p}+\int_{\Omega}a(x)U_{\delta_{2},\xi_{2}}^{p}\,dx
				&=\sum_{i=1}^{2}\left( a(\zeta^{0})\gamma_{1}+\varepsilon t_{i}\nabla a(\zeta^{0})\cdot\nu(\zeta^{0})\gamma_{1}\right) +o(\varepsilon)\\
				&=2a(\zeta^{0})\gamma_{1}+\varepsilon(t_{1}+t_{2}) \nabla a(\zeta^{0})\cdot\nu(\zeta^{0})\gamma_{1} +o(\varepsilon).
			\end{aligned}
		\end{equation*}
		
		\bigskip
		
		We will devote the rest of the proof to estimating the following term:
		\begin{equation*}
			\begin{aligned}
				&\int_{\Omega}a(x)\left( |PU_{\delta_{1},\xi_{1}}-PU_{\delta_{2},\xi_{2}}|^{p}-U_{\delta_{1},\xi_{1}}^{p}-U_{\delta_{2},\xi_{2}}^{p}\right)\,dx\\
				=&	\int_{B_\eta(\xi_{1})}a(x)\left( |PU_{\delta_{1},\xi_{1}}-PU_{\delta_{2},\xi_{2}}|^{p}-U_{\delta_{1},\xi_{1}}^{p}-U_{\delta_{2},\xi_{2}}^{p}\right)\,dx\\
				+&	\int_{B_\eta(\xi_{2})}a(x)\left( |PU_{\delta_{1},\xi_{1}}-PU_{\delta_{2},\xi_{2}}|^{p}-U_{\delta_{1},\xi_{1}}^{p}-U_{\delta_{2},\xi_{2}}^{p}\right)\,dx\\
				+&	\int_{\Omega\setminus (B_\eta(\xi_{1})\cup B_\eta(\xi_{2})) }a(x)\left( |PU_{\delta_{1},\xi_{1}}-PU_{\delta_{2},\xi_{2}}|^{p}-U_{\delta_{1},\xi_{1}}^{p}-U_{\delta_{2},\xi_{2}}^{p}\right)\,dx.\\
			\end{aligned}
		\end{equation*}
		
		\bigskip
		
		A straightforward calculation shows that:
		\begin{equation*}
			\begin{aligned}
				\int_{\Omega\setminus (B_\eta(\xi_{1})\cup B_\eta(\xi_{2})) }a(x)\left( |PU_{\delta_{1},\xi_{1}}-PU_{\delta_{2},\xi_{2}}|^{p}-U_{\delta_{1},\xi_{1}}^{p}-U_{\delta_{2},\xi_{2}}^{p}\right)\,dx=o(\varepsilon).
			\end{aligned}
		\end{equation*}
		
		On the other hand, using the equations \eqref{tmi2} and \eqref{isj}, we obtain:
		\begin{equation*}
			\begin{aligned}
				&\int_{B_\eta(\xi_{1})}a(x)\left( |PU_{\delta_{1},\xi_{1}}-PU_{\delta_{2},\xi_{2}}|^{p}-U_{\delta_{1},\xi_{1}}^{p}-U_{\delta_{2},\xi_{2}}^{p}\right)\,dx\\
				=&p\int_{B_\eta(\xi_{1})}a(x)U_{\delta_{1},\xi_{1}}^{p-1}\left(\left(PU_{\delta_{1},\xi_{1}}-U_{\delta_{1},\xi_{1}}\right)-PU_{\delta_{2},\xi_{2}} \right)\,dx+ \int_{B_\eta(\xi_{1})}a(x)U_{\delta_{2},\xi_{2}}^{p}\,dx\\
				+&O\left(\int_{B_\eta(\xi_{1})}\left|U_{\delta_{1},\xi_{1}}+\Theta_{x}\left(PU_{\delta_{1},\xi_{1}}-U_{\delta_{1},\xi_{1}}-PU_{\delta_{2},\xi_{2}} \right)  \right|^{p-2}\left(PU_{\delta_{1},\xi_{1}}-U_{\delta_{1},\xi_{1}}-PU_{\delta_{2},\xi_{2}} \right)^{2} \,dx\right) \\
				=&p\left( -a(\zeta^{0})\varepsilon\left( \frac{d_{1}}{2t_{1}}\right) ^{N-4}\gamma_{2}\right) -p\left( a(\zeta^{0})\varepsilon(d_{1}d_{2})^{\frac{N-4}{2}}\left(\frac{1}{|t_{1}-t_{2}|^{N-4}}-\frac{1}{|t_{1}+t_{2}|^{N-4}} \right) \gamma_{2} \right)+ o(\varepsilon).
			\end{aligned}
		\end{equation*}
		Analogously, it follows that
		\begin{equation*}
			\begin{aligned}
				&\int_{B_\eta(\xi_{2})}a(x)\left( |PU_{\delta_{1},\xi_{1}}-PU_{\delta_{2},\xi_{2}}|^{p}-U_{\delta_{1},\xi_{1}}^{p}-U_{\delta_{2},\xi_{2}}^{p}\right)\,dx\\
				=&p\left( -a(\zeta^{0})\varepsilon\left( \frac{d_{2}}{2t_{2}}\right) ^{N-4}\gamma_{2}\right) -p\left( a(\zeta^{0})\varepsilon(d_{1}d_{2})^{\frac{N-4}{2}}\left(\frac{1}{|t_{1}-t_{2}|^{N-4}}-\frac{1}{|t_{1}+t_{2}|^{N-4}} \right) \gamma_{2} \right)+ o(\varepsilon).
			\end{aligned}
		\end{equation*}

		\medskip

		In the last two expressions, we have assumed that, if $1\leq i,j\leq2, i\neq j$, then:
		\begin{equation*}
			\int_{B_\eta(\xi_{i})}\left|U_{\delta_{i},\xi_{i}}+\Theta_{x}\left(PU_{\delta_{i},\xi_{i}}-U_{\delta_{i},\xi_{i}}-PU_{\delta_{j},\xi_{j}} \right)  \right|^{p-2}\left(PU_{\delta_{i},\xi_{i}}-U_{\delta_{i},\xi_{i}}-PU_{\delta_{j},\xi_{j}} \right)^{2} \,dx=o(\varepsilon).
		\end{equation*}
which is a well-known fact, and we will not provide the details here. This concludes the proof.

	\end{proof}
	
	\bigskip\bigskip
	\begin{lemma}\label{LemaP2}
		The following estimate holds true:
		\begin{equation*}
			\begin{aligned}
				&\int_{\Omega} a(x)|PU_{\delta_{1},\xi_{1}}-PU_{\delta_{2},\xi_{2}}|^{p}\log\left(|PU_{\delta_{1},\xi_{1}}-PU_{\delta_{2},\xi_{2}}|\right)\,dx\\
				=&2a(\zeta^{0})\gamma_{3}-\frac{N-4}{2}a(\zeta^{0})\log (d_{1}+d_{2})\gamma_{1}-(N-3)a(\zeta^{0})\log (\varepsilon)\gamma_{1} +o(1),
			\end{aligned}
		\end{equation*}
		where 
		\begin{equation*}
			\gamma_{3}=	\alpha_{N}^{p}\int_{\mathbb{R}^{N}} U_{1,0}^{p}\log (U_{1,0})\,dx.
		\end{equation*}
	\end{lemma}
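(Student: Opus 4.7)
The plan is to localize the integral around the two concentration points $\xi_1$ and $\xi_2$ and, after the rescalings $y = (x-\xi_i)/\delta_i$, reduce the computation to integrals over $\mathbb{R}^N$ that produce $\gamma_1$ and $\gamma_3$, in the same spirit as Lemmas \ref{tmi}, \ref{propij}, and \ref{LemaP1}. The new ingredient is the elementary identity
\begin{equation*}
\log U_{\delta_i, \xi_i}(\delta_i y + \xi_i) \;=\; \log U_{1,0}(y) \;-\; \tfrac{N-4}{2}\log \delta_i,
\end{equation*}
which is exactly what produces the explicit $\log \delta_i$ term; substituting $\log \delta_i = \log d_i + \tfrac{N-3}{N-4}\log \varepsilon$ then isolates the announced $\log \varepsilon$ and $\log d_i$ contributions (yielding $\log d_1+\log d_2$, consistently with Proposition \ref{exp2}).

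First I would split $\Omega = B_\eta(\xi_1) \cup B_\eta(\xi_2) \cup \Omega_{\mathrm{ext}}$, where $\Omega_{\mathrm{ext}} := \Omega \setminus (B_\eta(\xi_1) \cup B_\eta(\xi_2))$; the two balls are disjoint because $\eta \leq |\xi_1-\xi_2|/2$. On $\Omega_{\mathrm{ext}}$, the pointwise bound $PU_{\delta_i, \xi_i}(x) \leq C\delta_i^{(N-4)/2}/|x-\xi_i|^{N-4}$ gives $\int_{\Omega_{\mathrm{ext}}} U_{\delta_i, \xi_i}^p\, dx = O\bigl((\delta_i/\eta)^N\bigr) = O\bigl(\varepsilon^{N/(N-4)}\bigr)$ after rescaling, and since $|\log|PU_1-PU_2||$ is $O(|\log\varepsilon|)$ on this set, the exterior region contributes $o(1)$.

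On $B_\eta(\xi_1)$ (and symmetrically on $B_\eta(\xi_2)$) I would write $V := PU_{\delta_1,\xi_1}-PU_{\delta_2,\xi_2} = U_{\delta_1, \xi_1} + R$ with $R := (PU_{\delta_1, \xi_1} - U_{\delta_1, \xi_1}) - PU_{\delta_2, \xi_2}$, and Taylor expand $s \mapsto |s|^p \log|s|$ about $s = U_{\delta_1, \xi_1}$:
\begin{equation*}
|V|^p \log|V| \;=\; U_{\delta_1, \xi_1}^p \log U_{\delta_1, \xi_1} \;+\; U_{\delta_1,\xi_1}^{p-1}\bigl( p \log U_{\delta_1, \xi_1} + 1\bigr) R \;+\; \mathcal{E},
\end{equation*}
where the remainder satisfies $|\mathcal{E}| \leq C(1+|\log\varepsilon|)\, U_{\delta_1,\xi_1}^{p-2} R^2$. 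The leading term, combined with $a(x) = a(\zeta^0) + O(\varepsilon)$ on $B_\eta(\xi_1)$ and the change of variables above, yields $a(\zeta^0)\bigl[\gamma_3 - \tfrac{N-4}{2}\gamma_1 \log \delta_1\bigr] + o(1)$. The linear and quadratic corrections reduce to weighted integrals of $U_{\delta_1,\xi_1}^{p-1} |R|$ and $U_{\delta_1, \xi_1}^{p-2} R^2$; using the pointwise bound $|R| \leq C\,\delta_1^{(N-4)/2}/|x-\widetilde{\xi}_1|^{N-4} + PU_{\delta_2, \xi_2}$ from Proposition \ref{Propaprox} and the rescaled computations behind Remark \ref{Rmk2}, these are each $O(\varepsilon)$, so multiplying by $|\log \varepsilon|$ still yields $o(1)$.

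The main obstacle is precisely this last estimate: the pointwise factor $\log U_{\delta_1, \xi_1}$ blows up as $-\tfrac{N-4}{2}\log\delta_1$ at the center and becomes very negative near $\partial B_\eta(\xi_1)$, so one must exploit the uniform bound $|\log U_{\delta_1,\xi_1}| \leq C|\log\varepsilon|$ on $B_\eta(\xi_1)$ (which follows because both $\delta_1 \sim \varepsilon^{(N-3)/(N-4)}$ and $\eta \sim \varepsilon$ are polynomial in $\varepsilon$) and then absorb this logarithm into the $O(\varepsilon)$ smallness of the remainder integrals. Once this absorption is verified, summing the contributions from $B_\eta(\xi_1)$ and $B_\eta(\xi_2)$ and expanding $\log \delta_i = \log d_i + \tfrac{N-3}{N-4}\log \varepsilon$ produces the claimed three-term asymptotic expansion.
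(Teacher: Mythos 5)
Your proposal is correct and follows essentially the same route as the paper: localize the integral to $B_\eta(\xi_1)$, $B_\eta(\xi_2)$, and the exterior (which is $o(1)$); reduce the ball contributions to $\int U_{\delta_i,\xi_i}^p \log U_{\delta_i,\xi_i}$ via rescaling; and substitute $\log\delta_i = \log d_i + \tfrac{N-3}{N-4}\log\varepsilon$ to extract the three explicit terms. The one organizational difference is that the paper first factors $\log|PU_{\delta_1,\xi_1}-PU_{\delta_2,\xi_2}| = \log|PU_{\delta_1,\xi_1}| + \log|1 - PU_{\delta_2,\xi_2}/PU_{\delta_1,\xi_1}|$ (showing the second summand integrates to $o(1)$) and then separately expands $|PU_{\delta_1,\xi_1}-PU_{\delta_2,\xi_2}|^p$ toward $U_{\delta_1,\xi_1}^p$, whereas you expand the full nonlinearity $s\mapsto|s|^p\log|s|$ in a single Taylor expansion about $s=U_{\delta_1,\xi_1}$; both organizations produce the same remainder integrals and both hinge on the $|\log\varepsilon|$-absorption you correctly single out as the main technical point. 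You also implicitly flag that the lemma as stated contains a typo: the term $\log(d_1+d_2)$ should read $\log d_1 + \log d_2$, which is what the paper's own ball-by-ball computation yields and what the form of $F^2$ in Proposition~\ref{exp2} requires.
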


\begin{proof} We write:
	\begin{equation}\label{logmain1}
		\begin{aligned}
			&\int_{\Omega} a(x)|PU_{\delta_{1},\xi_{1}}-PU_{\delta_{2},\xi_{2}}|^{p}\log(|PU_{\delta_{1},\xi_{1}}-PU_{\delta_{2},\xi_{2}}|)\,dx\\
			=&\int_{B_{\eta(\xi_{1})}} a(x)|PU_{\delta_{1},\xi_{1}}-PU_{\delta_{2},\xi_{2}}|^{p}\log(|PU_{\delta_{1},\xi_{1}}-PU_{\delta_{2},\xi_{2}}|)\,dx\\
			+&\int_{B_{\eta(\xi_{2})}} a(x)|PU_{\delta_{1},\xi_{1}}-PU_{\delta_{2},\xi_{2}}|^{p}\log(|PU_{\delta_{1},\xi_{1}}-PU_{\delta_{2},\xi_{2}}|)\,dx\\
			+&\int_{\Omega\setminus (B_{\eta(\xi_{1})}\cup B_{\eta(\xi_{2})})} a(x)|PU_{\delta_{1},\xi_{1}}-PU_{\delta_{2},\xi_{2}}|^{p}\log(|PU_{\delta_{1},\xi_{1}}-PU_{\delta_{2},\xi_{2}}|)\,dx.
		\end{aligned}
	\end{equation}
	\medskip
	It is easy to see that:
	\begin{equation}\label{logmain11}
		\begin{aligned}
			\int_{\Omega\setminus (B_{\eta(\xi_{1})}\cup B_{\eta(\xi_{2})})} a(x)|PU_{\delta_{1},\xi_{1}}-PU_{\delta_{2},\xi_{2}}|^{p}\log(|PU_{\delta_{1},\xi_{1}}-PU_{\delta_{2},\xi_{2}}|)=o(1).
		\end{aligned}
	\end{equation}
	The remaining two terms in equation \eqref{logmain1} can be addressed similarly. Thus, we will focus on the first term:	
	\begin{equation*}
		\begin{aligned}
			&\int_{B_{\eta(\xi_{1})}} a(x)|PU_{\delta_{1},\xi_{1}}-PU_{\delta_{2},\xi_{2}}|^{p}\log\left(\left|PU_{\delta_{1},\xi_{1}}-PU_{\delta_{2},\xi_{2}}\right|\right)\,dx\\
			=&\int_{B_{\eta(\xi_{1})}} a(x)|PU_{\delta_{1},\xi_{1}}-PU_{\delta_{2},\xi_{2}}|^{p}\log\left(\left|PU_{\delta_{1},\xi_{1}}\left(1-\frac{PU_{\delta_{2},\xi_{2}}}{PU_{\delta_{1},\xi_{1}}}\right)\right|\right)\,dx\\
			=&\int_{B_{\eta(\xi_{1})}} a(x)|PU_{\delta_{1},\xi_{1}}-PU_{\delta_{2},\xi_{2}}|^{p}\log(|PU_{\delta_{1},\xi_{1}}|)\,dx\\
			+&\int_{B_{\eta(\xi_{1})}} a(x)|PU_{\delta_{1},\xi_{1}}-PU_{\delta_{2},\xi_{2}}|^{p}\log\left(\left|1-\frac{PU_{\delta_{2},\xi_{2}}}{PU_{\delta_{1},\xi_{1}}}\right|\right)\,dx\\
			=&\int_{B_{\eta(\xi_{1})}} a(x)|PU_{\delta_{1},\xi_{1}}-PU_{\delta_{2},\xi_{2}}|^{p}\log(|PU_{\delta_{1},\xi_{1}}|)\,dx+o(1).
		\end{aligned}
	\end{equation*}
	The last equality in the previous equation holds because:
	\begin{equation*}
		\begin{aligned}
			\int_{B_{\eta(\xi_{1})}} a(x)|PU_{\delta_{1},\xi_{1}}-PU_{\delta_{2},\xi_{2}}|^{p}\log\left(\left|1-\frac{PU_{\delta_{2},\xi_{2}}}{PU_{\delta_{1},\xi_{1}}}\right|\right)=o(1).\\
		\end{aligned}
	\end{equation*}
	Now we estimate:
	\begin{equation}\label{logmain12}
		\begin{aligned}
			&\int_{B_{\eta(\xi_{1})}} a(x)|PU_{\delta_{1},\xi_{1}}(x)-PU_{\delta_{2},\xi_{2}}(x)|^{p}\log\left|PU_{\delta_{1},\xi_{1}}(x)\right|\,dx\\
			&=\int_{B_{\eta(\xi_{1})} } a(x)|PU_{\delta_{1},\xi_{1}}(x)|^{p}\log|PU_{\delta_{1},\xi_{1}}(x)|\,dx+O\left(\int_{B_{\eta(\xi_{1})}}a(x)|PU_{\delta_{1},\xi_{1}}(x)|^{p-1}(PU_{\delta_{2},\xi_{2}}(x))\log|PU_{\delta_{1},\xi_{1}}(x)|\,dx \right)\\
			&=\int_{B_{\eta(\xi_{1})} } a(x)U_{\delta_{1},\xi_{1}}^{p}(x)\log U_{\delta_{1},\xi_{1}}(x)\,dx+O\left(\int_{B_{\eta(\xi_{1})} } a(x)\left(|PU_{\delta_{1},\xi_{1}}(x)|^{p}\log| PU_{\delta_{1},\xi_{1}}(x)|-U_{\delta_{1},\xi_{1}}^{p}(x)\log U_{\delta_{1},\xi_{1}}(x)\right) \,dx\right) \\
			&+O\left(\int_{B_{\eta(\xi_{1})}}a(x)|PU_{\delta_{1},\xi_{1}}|^{p-1}(PU_{\delta_{2},\xi_{2}}(x))\log|PU_{\delta_{1},\xi_{1}}(x)| \,dx\right)\\
			&=\int_{B_{\eta(\xi_{1})}} a(x)U_{\delta_{1},\xi_{1}}^{p}(x)\log U_{\delta_{1},\xi_{1}}(x)\,dx+o(1)\\
			&=-\frac{N-4}{2}a(\zeta^{0})\log (d_{1})\gamma_{1}-\frac{N-3}{2}a(\zeta^{0})\log (\varepsilon)\gamma_{1}+a(\zeta^{0})\gamma_{3} +o(1).
		\end{aligned}
	\end{equation}
	In the last equality,  we made use of the following estimate:
	\begin{equation*}
		\begin{aligned}
			&\int_{B_{\eta(\xi_{1})}} a(x)U_{\delta_{1},\xi_{1}}^{p}(x)\log U_{\delta_{1},\xi_{1}}(x)\,dx=\alpha_{N}^{p}\int_{B_{\eta(\xi_{1})} } a(x)\left(\frac{\delta_{1}^{N}}{(\delta_{1}^{2}+|x-\xi_{1}|^{2})^{N}} \right) \log \left(\frac{\delta_{1}^{\frac{N-4}{2}}}{(\delta_{1}^{2}+|x-\xi_{1}|^{2})^{\frac{N-4}{2}}}\right)\,dx\\
			&=\alpha_{N}^{p}\int_{B_{\frac{\eta}{\delta_{1}}}(0) } a(\delta_{1}y+\xi_{1})\left(\frac{1}{(1+|y|^{2})^{N}} \right) \log \left(\frac{\delta_{1}^{\frac{N-4}{2}}}{(\delta_{1}^{2}+|\delta_{1}y|^{2})^{\frac{N-4}{2}}}\right)\,dy\\
			&=\alpha_{N}^{p}\int_{\frac{B_{1}-\xi_{1}}{\delta_{1}} } a(\xi_{1}^{0})\left(\frac{1}{(1+|y|^{2})^{N}} \right)\left( \log \left(\delta_{1}^{-\frac{N-4}{2}}\right)+ \log \left(U_{1,0}(x)\right)\right) \,dy +o(1)\\
			&=-\frac{N-4}{2}a(\zeta^{0})\log (\delta_{1})\gamma_{1}+a(\xi_{1}^{0})\gamma_{3} +o(1)\\
			&=-\frac{N-4}{2}a(\zeta^{0})\log (d_{1}\varepsilon^{\frac{N-3}{N-4}})\gamma_{1}+a(\zeta^{0})\gamma_{3} +o(1)\\
			&=-\frac{N-4}{2}a(\zeta^{0})\log (d_{1})\gamma_{1}-\frac{N-3}{2}a(\zeta^{0})\log (\varepsilon)\gamma_{1}+a(\zeta^{0})\gamma_{3} +o(1),
		\end{aligned}
	\end{equation*}
and the fact that, with direct calculations, we have that:
	\begin{equation*}
		\begin{aligned}
			O\left(\int_{B_{\eta(\xi_{1})} } a(x)\left(|PU_{\delta_{1},\xi_{1}}(x)|^{p}\log| PU_{\delta_{1},\xi_{1}}(x)|-U_{\delta_{1},\xi_{1}}^{p}(x)\log U_{\delta_{1},\xi_{1}}(x)\right) \,dx\right)&=o(1)\\
			O\left(\int_{B_{\eta(\xi_{1})}}a(x)|PU_{\delta_{1},\xi_{1}}(x)|^{p-1}\left(PU_{\delta_{2},\xi_{2}}(x)\right)\log|PU_{\delta_{1},\xi_{1}}(x)| \,dx\right)&=o(1).
		\end{aligned}
	\end{equation*}

	\medskip

	Similarly, we obtain the estimate of the second term in equation \eqref{logmain1}: 
	\begin{equation}\label{logmain13}
		\begin{aligned}
			&\int_{B_{\eta(\xi_{2})}} a(x)\left|PU_{\delta_{1},\xi_{1}}(x)-PU_{\delta_{2},\xi_{2}}(x)\right|^{p}\log(|PU_{\delta_{1},\xi_{1}}(x)-PU_{\delta_{2},\xi_{2}}(x)|) \,dx\\
			=&-\frac{N-4}{2}a(\zeta^{0})\log (d_{2})\gamma_{1}-\frac{N-3}{2}a(\zeta^{0})\log (\varepsilon)\gamma_{1}+a(\zeta^{0})\gamma_{3} +o(1).
		\end{aligned}
	\end{equation}
	
	The proof is completed by taking into account equations \eqref{logmain1}, \eqref{logmain11}, \eqref{logmain12}, and \eqref{logmain13}.
	\end{proof}

\bigskip

This completes the proof of the estimations for the nonlinear part of the reduced energy functional in case \eqref{typ2}. The estimation for the nonlinear part in case \eqref{typ1} is given by:

	\begin{lemma}
		\label{lemma:NL}
	Let $(\boldsymbol{\xi},\textbf{d}, \textbf{t})\in\Sigma_{1}$, then	the following estimate holds true:
		\begin{equation}
			\begin{aligned}
				\frac{1}{p-\varepsilon} \int_{\Omega} a(x) |	V_{(\boldsymbol{\xi},\textbf{d}, \textbf{t})}
				(x)|^{p-\varepsilon} \, dx &= \frac{1}{p} \int_{\Omega} a(x) |	V_{(\boldsymbol{\xi},\textbf{d}, \textbf{t})}
				(x)|^p \, dx \\
				&\quad + \varepsilon \left[ \frac{1}{p^2} \int_{\Omega} a(x) |	V_{(\boldsymbol{\xi},\textbf{d}, \textbf{t})}
				(x)|^{p} dx - \frac{1}{p} \int_{\Omega} a(x) |	V_{(\boldsymbol{\xi},\textbf{d}, \textbf{t})}
				(x)|^{p-1} \log |	V_{(\boldsymbol{\xi},\textbf{d}, \textbf{t})}
				(x)| \, dx \right] \\
				&\quad + o(\varepsilon) \\
				&= \sum_{i=1}^{k}a(\xi_{i}^{0})\frac{\gamma_{1}}{p}\\
				& + \sum_{i=1}^{k}\left(a(\xi_{i}^{0})\frac{\gamma_{1}}{p^{2}}+ t_{i}\nabla a(\xi_{i}^{0})\cdot\nu(\xi_{i}^{0})\frac{\gamma_{1}}{p}-\left( \frac{d_{i}}{2t_{i}}\right) ^{N-4}a(\xi_{i}^{0})\gamma_{2}-a(\xi_{i}^{0})\frac{\gamma_{3}}{p} \right) \varepsilon\\
				&+\sum_{i=1}^{k}\left(  \frac{N-4}{2}a(\xi_{i}^{0})\log (d_{i})\frac{\gamma_{1}}{p}+\frac{N-3}{2}a(\xi_{i}^{0})\log(\varepsilon)\frac{\gamma_{1}}{p}\right)\varepsilon +o(\varepsilon).
			\end{aligned}
		\end{equation}
		
	\end{lemma}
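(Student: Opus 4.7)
The proof will mirror the scheme employed for case \eqref{typ2} (Lemma \ref{nonlinearij}), with the important simplification that the bubble centers $\xi_1^0,\dots,\xi_k^0$ are distinct points of $\partial\Omega$, so $|\xi_i-\xi_j|$ stays bounded away from zero as $\varepsilon\to 0$. The plan is to start from the pointwise Taylor expansion
\[
\frac{1}{p-\varepsilon}|s|^{p-\varepsilon}=\frac{|s|^{p}}{p}+\varepsilon\!\left(\frac{|s|^{p}}{p^{2}}-\frac{|s|^{p}\log|s|}{p}\right)+o(\varepsilon),
\]
integrate it against $a(x)$ with $s=V_{(\boldsymbol{\xi},\textbf{d},\textbf{t})}$, and then separately estimate the two non-trivial integrals $I_1:=\int_\Omega a(x)|V|^{p}\,dx$ and $I_2:=\int_\Omega a(x)|V|^{p}\log|V|\,dx$.

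For $I_1$, I would write $\Omega=\bigcup_{i=1}^k B_\eta(\xi_i)\cup\Omega_{\mathrm{out}}$ where $\eta$ is a small but fixed constant (chosen so the balls are disjoint). On $\Omega_{\mathrm{out}}$ the integrand is $o(\varepsilon)$ by direct estimates (each $U_{\delta_i,\xi_i}^p$ has tail $O((\delta_i/\eta)^N)$). On $B_\eta(\xi_i)$ I expand
\[
|V|^{p}=U_{\delta_i,\xi_i}^{p}+p\,U_{\delta_i,\xi_i}^{p-1}\bigl((-1)^{b_i}(PU_{\delta_i,\xi_i}-U_{\delta_i,\xi_i})-\sum_{j\neq i}(-1)^{b_j-b_i}PU_{\delta_j,\xi_j}\bigr)+\text{q.t.},
\]
and recognize that the interaction terms $\int_{B_\eta(\xi_i)}a(x)U_{\delta_i,\xi_i}^{p-1}PU_{\delta_j,\xi_j}\,dx$ for $i\neq j$ are $o(\varepsilon)$ by equation \eqref{inj} of Proposition \ref{propij} (this is the key place where the separation $\xi_i^0\neq\xi_j^0$ is used). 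Applying Proposition \ref{tmi} to the surviving local terms yields
\[
I_1=\sum_{i=1}^{k}\!\Bigl(a(\xi_i^{0})\gamma_{1}+\varepsilon t_i\,\nabla a(\xi_i^{0})\!\cdot\!\nu(\xi_i^{0})\,\gamma_{1}-p\varepsilon\bigl(\tfrac{d_i}{2t_i}\bigr)^{\!N-4}a(\xi_i^{0})\gamma_{2}\Bigr)+o(\varepsilon).
\]
The quadratic remainders $U_{\delta_i,\xi_i}^{p-2}(PU_{\delta_i,\xi_i}-U_{\delta_i,\xi_i})^2$ and $U_{\delta_i,\xi_i}^{p-2}PU_{\delta_j,\xi_j}^2$ are handled exactly as in Lemma \ref{LemaP1}.

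For $I_2$, I use the same localization. On $B_\eta(\xi_i)$ factor $\log|V|=\log PU_{\delta_i,\xi_i}+\log|1-\sum_{j\neq i}(-1)^{b_j-b_i}PU_{\delta_j,\xi_j}/PU_{\delta_i,\xi_i}|$; since $\xi_j$ stays away from $\xi_i$, the second piece is uniformly bounded and produces an $o(1)$ contribution after integration against $a(x)|V|^p$. Then replacing $PU$ by $U$ inside both the base and the log introduces errors that are $o(1)$, and the explicit change of variables $y=(x-\xi_i)/\delta_i$ with $\delta_i=d_i\varepsilon^{(N-3)/(N-4)}$ gives
\[
\int_{B_\eta(\xi_i)}a(x)U_{\delta_i,\xi_i}^{p}\log U_{\delta_i,\xi_i}\,dx=-\tfrac{N-4}{2}a(\xi_i^{0})\log(d_i)\gamma_{1}-\tfrac{N-3}{2}a(\xi_i^{0})\log(\varepsilon)\gamma_{1}+a(\xi_i^{0})\gamma_{3}+o(1),
\]
exactly as in Lemma \ref{LemaP2}. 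Summing over $i$ and combining with the expansion of $I_1$ produces the claimed formula.

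The main obstacle, and essentially the only one, is making the cross-bubble corrections rigorously small. Unlike the collapsing-bubble case of Lemma \ref{nonlinearij} (where the leading interaction term $(d_1d_2)^{(N-4)/2}(|t_1-t_2|^{-(N-4)}-|t_1+t_2|^{-(N-4)})\gamma_2$ contributes at order $\varepsilon$), here the uniform lower bound $|\xi_i^0-\xi_j^0|\ge c>0$ forces every mixed term to be $o(\varepsilon)$. Once that is established via Proposition \ref{propij} and the pointwise estimate $PU_{\delta_j,\xi_j}(x)\le C\delta_j^{(N-4)/2}$ on $B_\eta(\xi_i)$, the rest of the calculation is bookkeeping that parallels Lemmas \ref{LemaP1}--\ref{LemaP2} term by term.
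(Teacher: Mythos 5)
Your plan is exactly what the paper indicates: adapt the argument of Lemma~\ref{nonlinearij} (via Lemmas~\ref{LemaP1} and~\ref{LemaP2}), exploiting the separation $|\xi_i^0-\xi_j^0|\geq c>0$ to kill all cross-bubble interactions at order $\varepsilon$ via equation~\eqref{inj}, while Proposition~\ref{tmi} supplies the self-interaction and $\left(\tfrac{d_i}{2t_i}\right)^{N-4}$ terms. This is the intended proof, and the bookkeeping that assembles $\tfrac{I_1}{p}+\varepsilon\bigl(\tfrac{I_1}{p^2}-\tfrac{I_2}{p}\bigr)$ into the stated formula checks out.

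Two small slips worth correcting if you write this up. First, you declare $\eta$ to be "a small but fixed constant," but Propositions~\ref{tmi} and~\ref{propij} are proved with the $\varepsilon$-scaled $\eta=\min_i\{t_i\varepsilon\}$ from Remark~\ref{Rmk2}; if you prefer a fixed radius you must add the (easy) tail estimate over the annulus $B_{\eta_{\mathrm{fixed}}}(\xi_i)\setminus B_{C\varepsilon}(\xi_i)$, where the integrand is $O\!\left((\delta_i/\varepsilon)^N\right)=o(\varepsilon)$; as written there is a mismatch with the cited results. Second, in your local expansion of $|V|^p$ the signs are off: after factoring $V=(-1)^{b_i}\bigl(PU_i+\sum_{j\neq i}(-1)^{b_j-b_i}PU_j\bigr)$, the linear correction is $+\,pU_i^{p-1}\bigl((PU_i-U_i)+\sum_{j\neq i}(-1)^{b_j-b_i}PU_j\bigr)$, without the stray $(-1)^{b_i}$ and with a plus on the sum. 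Neither issue affects the conclusion, since the cross terms vanish at order $\varepsilon$ regardless of sign and the self-interaction sign is as in Proposition~\ref{tmi}. You might also note that the $\frac{1}{p}\int a|V|^{p-1}\log|V|$ appearing in the lemma statement (and in Lemma~\ref{nonlinearij}) is a typo for $\frac{1}{p}\int a|V|^{p}\log|V|$, consistent with your Taylor expansion and with what is actually used in Lemma~\ref{LemaP2}.
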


\bibliographystyle{plain}

\end{document}